\documentclass[12pt]{amsart}

\usepackage{fullpage}
\usepackage{standalone}

\usepackage[utf8]{inputenc}
\usepackage[T1]{fontenc}
\usepackage{lmodern}

\usepackage{graphicx}
\usepackage{tikz}



\newtheorem{theorem}{Theorem}[section]
\newtheorem{proposition}[theorem]{Proposition}
\newtheorem{lemma}[theorem]{Lemma}
\newtheorem{corollary}[theorem]{Corollary}

\newtheorem{remark}[theorem]{Remark}
\newtheorem{question}[theorem]{Question}
\newtheorem{conjecture}[theorem]{Conjecture}

\numberwithin{equation}{section}
\setcounter{secnumdepth}{3}



\newcommand\HH {{\mathbb H}}
\newcommand\KK {{\mathbb K}}
\newcommand\NN {{\mathbb N}}

\newcommand\QQ {{\mathbb Q}}
\newcommand\RR {{\mathbb R}}
\newcommand\TT {{\mathbb T}}
\newcommand\ZZ {{\mathbb Z}}


\newcommand\sltwoz{{\rm SL(2,\ZZ)}}
\newcommand\sltwor{{\rm SL(2,\RR)}}

\newcommand\psltwoz{{\rm PSL(2,\ZZ)}}

\newcommand\veech{{\rm SL}}

\newcommand\sys{{\rm Sys }}
\newcommand\area{{\rm Area }}

\newcommand\re{{\rm Re }}
\newcommand\im{{\rm Im }}
\newcommand\hol {{\rm Hol}}
\newcommand\support{{\rm Supp }}

\newcommand\cA{{\mathcal{A}  }}

\newcommand\cF{{\mathcal{F}  }}
\newcommand\cG{{\mathcal{G}  }}
\newcommand\cH{{\mathcal{H}  }}
\newcommand\cI{{\mathcal{I}  }}

\newcommand\cK{{\mathcal{K}  }}
\newcommand\cL{{\mathcal{L}  }}
\newcommand\cM{{\mathcal{M}  }}

\newcommand\cO{{\mathcal{O}  }}

\newcommand\cS{{\mathcal{S}  }}



\begin{document}

\title[Lagrange Spectrum Orbit B7]
{The Lagrange spectrum of some square-tiled surfaces}

\author{Pascal Hubert, Samuel Leli\`evre, Luca Marchese, Corinna Ulcigrai}

\address{I2M, Centre de Math\'ematiques et Informatique (CMI),
Universit\'e Aix-Marseille, 39 rue Joliot Curie, 13453 Marseille Cedex 13, France.}

\email{pascal.hubert@univ-amu.fr}

\address{Laboratoire de math\'ematique d'Orsay,
UMR 8628 CNRS Université Paris-Sud,
Bâtiment 425, 91405 Orsay cedex, France}

\email{samuel.lelievre@math.u-psud.fr}

\address{Universit\'e Paris 13, Sorbonne Paris Cit\'e,
LAGA, UMR 7539, 99 Avenue Jean-Baptiste Cl\'ement, 93430 Villetaneuse, France.}

\email{marchese@math.univ-paris13.fr}

\address{School of Mathematics, University of Bristol, University Walk, Clifton, Bristol, BS8 1TW, United Kingdom.}

\email{corinna.ulcigrai@bristol.ac.uk}

\begin{abstract}
Lagrange spectra have been defined for closed submanifolds of the moduli space of translation surfaces which are invariant under the action of $\sltwor$. We consider the closed orbit generated by a specific covering of degree 7 of the standard torus, which is an element of the stratum $\cH(2)$. We give an explicit formula for the values in the spectrum, in terms of a cocycle over the classical continued fraction. Differently from the classical case of the modular surface, where the lowest part of the Lagrange spectrum is discrete, we find an isolated minimum, and a set with a rich structure right above it.
\end{abstract}

\maketitle

\section{Introduction}

The classical \emph{Lagrange spectrum} $\cL$ is a famous and well studied subset of the real line which admits both number theoretical and dynamical interpreations. It  can be defined as the set  of values of the function $L:\RR\to\RR \cup \{ + \infty\}$ given by
$$
L(\alpha):=
\limsup_{q,p\to\infty}\frac{1}{q|q\alpha -p|}
$$
for $\alpha \in \RR$. One has that $L(\alpha)=L<+\infty$ if and only if $\alpha$ \emph{badly approximable}, or, more precisely if and only if 
for any $C>L$, we have $|\alpha-p/q|>(Cq^2)^{-1}$ for all $p$ and $q$ big enough and, moreover, $L$ is minimal with respect to this property. One can show that $\cL$ can also be described as a \emph{penetration spectrum} for the geodesic flow on the (unit tangent bundle of the) modular surface $\HH/\sltwoz$ in the following way. Let $\textrm{ height }(\cdot)$ be the hyperbolic height function on the modular surface. If $(\gamma_t)_{t\in\RR}$ is any hyperbolic geodesic on $\HH/\sltwoz$ which has $\alpha \in \RR$ as forward endpoint and which is contained in a compact, the value $L(\alpha)$ is related to the asymptotic depth of penetration of the geodesic $\gamma_t$ into the cusp of the modular surface, given by the quantity:
$$
\limsup_{t\to+\infty} \textrm{ height }(\gamma_t).
$$

The structure of $\cL$ has been studied for more than a century. Among the wide literature, we mention the very good introduction in \cite{cusicflahive}, where it is proved that $\cL$ is a closed subset of the real line and that the values $L(\alpha)$ for $\alpha$ quadratic irrational form a dense subset. M. Hall in 1947 (see \cite{hall}) established that  $\cL(\cM)$ contains a positive half-line, also known as \emph{Hall's ray}. 
 The lower part of $\cL$, that is the window $[\sqrt{5},3]\cap\cL$, is a discrete sequence of quadratic irrational numbers cumulating to $3$, as proved by Markoff in 1879. The minimum of $\cL$, that is $\sqrt{5}$, is known as \emph{Hurwitz's constant}. The geometric picture corresponding to the discrete part of $\cL$ was described in \cite{series}. A fine result on the Hausdorff dimension of $\cL$ in the region between the discrete part and the Hall ray is proved more recently by \cite{gugu}. 

A great variety of generalizations of Lagrange spectra has been defined and studied, mostly related to geodesic flows in negative curvature, in particular by Paulin-Parkkonen (see \cite{paulinparkkonnen} and references therein). We do no attempt to resume the literature on the subject and we refer to \cite{cusicflahive}, \cite{paulinparkkonnen} and \cite{hubertmarcheseulcigrai} for a complete bibliography.  The study of generalizations of the Lagrange spectrum has seen a very  recent surge of interest, see for example the recent work by Ibarra and Moreira on Lagrange spectra for the geodesic flow on surfaces with variable negative curvature \cite{IM} and in particular the various recent works \cite{hubertmarcheseulcigrai, artigianimarcheseulcigrai, boshernitzandelecroix} on Lagrange spectra of translation surfaces, which are also the object of study of the present paper. 

A generalization of Lagrange spectra in the context of \emph{translation surfaces} was recently introduced by Hubert, Marchese and Ulcigrai  in~\cite{hubertmarcheseulcigrai}. 
\emph{Translation surfaces} are surfaces obtained by glueing a finite set of polygons in the plane, by identifying pairs of isometric parallel sides by translations (see the next subsection definitions), so that  one gets a surface which carries a flat metric with conical singularities. The \emph{moduli space} of translation surfaces consists of translation surfaces up to the equivalence relation obtained by cutting and pasting by parallel translations.  A natural \emph{action of  $SL(2,\mathbb{R})$} (the group of $2$ by $2$ matrices with real entries and determinant one) on the moduli space of translation surfaces is induced by the linear action of matrices on polygons in the plane (see the next subsection).

More precisely, in \cite{hubertmarcheseulcigrai}, Lagrange spectra are associated to any \emph{locus} of translation surfaces in their moduli space which is closed and invariant under the $SL(2,\mathbb{R})$ action. Examples of $SL(2,\mathbb{R})$-invariant loci are \emph{strata}, i.e. loci formed by all translation surfaces with a prescribed type of conical singularities and \emph{Teichmueller curves}, i.e. (unit tangent bundles of) hyperbolic surfaces which are  closed $SL(2,\mathbb{R})$ orbits. The translation surfaces whose $\sltwor$ orbit is a \emph{Teichmueller curve} are known as \emph{Veech} (or \emph{lattice}) \emph{surfaces} and are rich in affine symmetries. The simplest example of a Veech surface is the square torus (whose $\sltwor$ orbit is the modular surface, for which the Lagrange spectrum defined in \cite{hubertmarcheseulcigrai} reduces to the classical case) and their covers, \emph{square tiled surfaces} (see below for the definition). 

It was shown that Lagrange spectra of closed invariant loci of translation surfaces share several of the same qualitative properties with $\cL$. For example, it is proved already  in \cite{hubertmarcheseulcigrai} that each of these generalized Lagrange spectra is closed and equal to the closure of a set of values which generalize quadratic irrationals (see after Definition \ref{defLspectrum}).   
In \cite{artigianimarcheseulcigrai} it is proved that $\cL(\cM)$ contains a positive half-line, also called \emph{Hall's ray}, for all those $\cM$ containing a \emph{Veech surface} $S$ (see also Theorem 1.6 of \cite{hubertmarcheseulcigrai}, where the same result was proved for loci which contain a square-tiled surface). Recently, M. Boshernitzan and V. Delecroix computed the minimum of the Lagrange spectrum of a connected component of a stratum of translation surfaces (whose value is given in the next subsection after Definition \ref{defLspectrum}) and showed that it is an isolated point (see \cite{boshernitzandelecroix}).  

The aim of this paper is to study an explicit simple case of Lagrange spectrum in the context of translation surfaces, for which we can  provide a very fine analysis  which displays interesting  phenomena which makes it different than the classical and general case (see below).  We consider \emph{square-tiled surfaces},  that is translation surfaces tiled by  copies of the square $[0,1]^2$, which  
as reminded earlier 
have a particularly simple  $\sltwor$ orbit closures in strata of translation surfaces: 
any orbit $\sltwor\cdot S$ of a square tiled surface $S$ is closed in its stratum. One can associate to the orbit closure a Lagrange spectrum (see the Definition \ref{defLvalues} and \ref{defLspectrum} below), which will denote by $\cL(S)$ (where $\cL(S)=\cL(S')$ if $S$ and $S'$ have the same orbit closure).  
  It was  shown in  \cite{hubertmarcheseulcigrai} that one can study the Lagrange spectrum of  square-tiled surfaces  by using renormalization and continued fractions. In particular, a formula for \emph{high} values in the Lagrange spectrum using continued fraction was proved (see Lemma \ref{Lem:Lem5.10HubertMarcheseUlcigrai} in \cite{hubertmarcheseulcigrai}). 
   

One can see that if $S$ 
is any square-tiled surface whose 
     number of squares is at most $5$, then $\cL(S)$ is an affine copy of the classical Lagrange spectrum $\cL(\TT^2)$.      
   This paper is devoted to the study of the Lagrange spectrum $\cL(S)$ arising for a square tiled surace $S$ consisting of $7$ squares (in the so-called \emph{Orbit B7} in the stratum $\cH(2)$, see below).   As explained in the next subsection (see also Remark \ref{RemWhyWeChoseOrbitB}), the $SL(2,R)$ orbit closure of this surface is one of the simplest orbit closures whose study is both non-trivial and accessible by renormalization techniques.
In particular,  one can show that the formula using continued fraction and renormalization allows to compute \emph{all} values in  the Lagrange spectrum of this translation surface (see Theorem \ref{thm:renormalizedformula} and Remark \ref{RemWhyWeChoseOrbitB}), while it is not clear if this is the case for an arbitrary number of squares.
 
 Our main result (Theorem \ref{TheoremMainTheorem} in the next section)  describes the initial structure of the Lagrange spectrum $\cL(S)$ of this square-tiled surface. We compute in particular the smallest value $\phi_1$ of $\cL(S)$ and prove that it is isolated, while we show that the second smallest value  $\phi_2$ is not isolated. In particular, this shows that  $\cL(S)$
 provides an example of a Lagrange spectrum whose lower part is richer than in the classical case and for which the minimum $\phi_1$ is different than the minimum of the corresponding connected component  computed by M. Boshernitzan and V. Delecroix in \cite{boshernitzandelecroix}.  Moreover, our analysis provide further information on the intricate structure of the bottom part of $\cL(S)$, i.e. we can describe two countable families of accumulation points above $\phi_2$ which seems to indicate that any interval with left endpoint in $\phi_2$ intersects $\cL(S)$ in a Cantor set with positive Hausdorff dimension bounded from below (see the open questions \ref{Q1} and \ref{Q2} below). The precise statement of our main results is given in Theorem \ref{TheoremMainTheorem} below. 

Our methods are a generalization of the methods used by Cusick and Flahive to analyze the discrete part of the classical Lagrange spectrum (see  \S~1 of \cite{cusicflahive}. We crucially exploit the renormalization formula for values in the spectrum 
(see Theorem \ref{thm:renormalizedformula} and Corollary \ref{cor:renormalizedformulafororbitB7}). As in the classical case, the formula is based on continued fractions, but in our setup also involves renormalized values of the \emph{multiplicity}, i.e. a geometric quantity associated to the $SL(2, \mathbb{Z})$ orbit of the square-tiled surface. By analyzing the structure of this orbit, which can be encoded in a graph, we show that to study the bottom of the spectrum it is sufficient to focus only on a smaller and simpler subgraph. Essentially, this allows to reduce the study of the Lagrange spectrum to the analysis of a limsup function over a subshift of finite type (see Theorem \ref{TheoremReductionToSubshift}). We believe that this type of framework has the potential to be applied in much greater generality in the study of other Lagrange spectra and that our present analysis, in addition to its intrinsic interest, can also provide  a paradigm for future works.  


\subsection*{Definitions and main results}

A \emph{translation surface} is a genus $g$ closed surface $S$ with a flat metric and a finite set $\Sigma$ of conical singularities $p_1,\dots,p_r$, the angle at each $p_i$ being an integer multiple of $2\pi$. An equivalent definition of translation surface $S$ is the datum $(M,w)$, where $M$ is a compact Riemann surface and $w$ is an holomorphic 1-form on $M$ having a zero at each $p_i$. The relation $k_1+\dots+k_r=2g-2$ holds, where $k_1,\dots,k_r$ are the orders of the zeroes of $w$. The 1-form $w$ induces a non-zero area form $(i/2)w\wedge\bar{w}$ on $S$ and we fix a normalization on translation surfaces requiring $\area(S)=1$. A stratum $\cH=\cH(k_1,\dots,k_r)$ is the set of translation surfaces $S$ whose corresponding holomorphic 1-form $w$ has $r$ zeros with orders $k_1,\dots,k_r$, where $k_1+\dots+k_r=2g-2$. Any stratum admits an action of $\sltwor$, indeed for a translation surface $S=(M,w)$ and an element $G\in\sltwor$ a new translation surface
$
G\cdot S=(G_\ast M,G_\ast w)
$
is defined, where the 1-form $G_\ast w$ is the composition of $w$ with $G$ and $G_\ast M$ is the complex atlas for which $G_\ast w$ is holomorphic. In the following we mostly consider the action of the geodesic flow $g_t$ and the group of rotations $r_\theta$, defined respectively for $t\in\RR$ and $-\pi\leq\theta<\pi$ by
$$
g_t:=
\begin{pmatrix}
e^{t} & 0 \\
0 & e^{-t}
\end{pmatrix}
\textrm{ and }
r_\theta:=
\begin{pmatrix}
\cos\theta & -\sin\theta \\
\sin\theta & \cos\theta
\end{pmatrix}.
$$
For a general overview on translation surfaces we recommend the surveys \cite{fornimatheus} and \cite{zorich}.

\medskip

A \emph{saddle connection} for the translation surface $S$ is a geodesic $\gamma$ for the flat metric connecting two conical singularities and not containing points of $\Sigma$ in its interior. The set $\hol(S)$ of \emph{periods} of $X$ is the set of complex numbers $v:=\int_\gamma w$, where $\gamma$ is a saddle connection for $S$ and $w$ is the holomorphic 1-form. The systole function $\sys:\cH\to\RR_+$ is defined by
$$
\sys(S):=\min_{v\in\hol(S)}|v|.
$$
Strata are non compact. An exhaustion by compact subsets is the family $(\cK)_{c>0}$, where for any $c>0$ we define $\cK_c$ as the set of those $S$ such that $\sys(S)\geq c$. Therefore the positive $g_t$-orbit of a translation surface $S$ stays in a compact if $\sys(g_t\cdot S)>c$ for some constant $c>0$ and any $t>0$. It is known from Vorobets, and reproved in Proposition 1.1 of \cite{hubertmarcheseulcigrai}, that
\begin{equation}\label{defLvalues}
L(S):=
\limsup_{|\im(v)|\to\infty}
\frac{1}{|\re(v)|\cdot|\im(v)|}=
\limsup_{t\to+\infty}
\frac{2}{\big(\sys(g_t\cdot S)\big)^2},
\end{equation}
where the $\limsup$ in the middle term is taken over all periods $v\in\hol(S)$. Therefore the quantity $L(S)$ defined above gives a measure of the size of the asymptotical maximal excursion of the positive $g_t$-orbit of $S$. Let $\cM$ be an orbit closure for the action of $\sltwor$ in some stratum. The associated \emph{Lagrange spectrum} is the set of values
\begin{equation}\label{defLspectrum}
\cL(\cM):=\{L(S)\textrm{ ; }S\in\cM\}.
\end{equation}
According to Theorem 1.5 of \cite{hubertmarcheseulcigrai}, any $\cL(\cM)$ is a closed subset of the real line, equal to the closure of the set of values $L(S)$ for those $S\in\cM$ whose geodesic $g_t\cdot S$ is periodic.  
M. Boshernitzan and V. Delecroix proved in \cite{boshernitzandelecroix} that if $\cM$ is a connected component of any stratum $\cH(k_1,\dots,k_r)$, then the minimum of $\cL(\cM)$ is $(k_1+\dots+k_r+r)\sqrt{5}$ and is isolated.

\medskip

In this paper we consider \emph{square-tiled surfaces} $S$, also said \emph{origamis}, that is translation surfaces $S$ tiled by copies of the square $[0,1]^2$. Equivalently, $S$ is square-tiled if there exists a ramified covering $\rho:S\to \RR^2/\ZZ^2$, unramified outside $0\in \RR^2/\ZZ^2$ and such that $\rho^\ast(dz)$ is the holomorphic 1-form of $S$. Square tiled surfaces are examples of Veech surfaces and any $\sltwor$ orbit  $\sltwor\cdot S$ of a square-tiled surface $S$ is closed in its stratum. For simplicity we denote its Lagrange spectrum by $\cL(S)$ instead of $\cL(\sltwor\cdot S)$, where of course $\cL(S)=\cL(S')$ if $S$ and $S'$ have the same orbit closure. Observe that it is implicit in our definition that the vertical and the horizontal directions on $S$ are rational directions. Consider any other direction and let $\theta$ be the angle it forms with the vertical, then set $\alpha:=\tan\theta$. At page 187 of \cite{hubertmarcheseulcigrai} it is explained that $\cL(S)$ is parametrized by the function
$
L(S,\cdot):\RR\to\cL(S)
$
defined by
$$
L(S,\alpha):=
L(r_{\arctan(\alpha)}\cdot S).
$$

It is an easy exercise to prove that for $S=\TT^2$ we have
$$
L(\TT^2,\alpha)=
\limsup_{q,p\to\infty}
\frac{1}{q\cdot|q\alpha-p|},
$$
which is the function giving rise to the classical Lagrange spectrum $\cL$, that is the set of values $L(\TT^2,\alpha)$ for $\alpha$ badly approximable. 

\medskip
 
This paper is devoted to the study of the Lagrange spectrum $\cL(S)$ arising for an origami $S$ in the so-called \emph{Orbit B7} in the stratum $\cH(2)$. Observe first that the action of $\sltwor$ induces an action of $\sltwoz$, and the latter preserves the set of origamis with a given number $N$ of squares. For the stratum $\cH(2)$ the orbits of such action have been classified in \cite{hubertlelievre} and \cite{mcmullen}, proving that there are always at most two orbits. The orbit B7 is described explicitly in \S~\ref{Sec:DescriptionOfOrbitB7} and essential details on the classification for $\cH(2)$ are resumed in \S~\ref{sec:action-sltwoz-primitive-origamis}. The interest in studying the orbit B7 can be seen from Figure \ref{fig:surfaces}: it contains origamis $X_j$ such that any horizontal saddle connection of $X_j$ is winded at least twice on the horizontal closed geodesic of $\TT^2$ under the covering map $\rho:X_j\to\TT^2$. One can see that if $\cO$ is an orbit where such property is not satisfied, like for example when the number of squares is at most $5$, then $\cL(\cO)$ is an affine copy of the classical Lagrange spectrum $\cL(\TT^2)$. On the other hand, any $X_j$ in the orbit B7 always has an horizontal saddle connection that is winded on the horizontal closed geodesic of $\TT^2$ at most twice. When this second property holds one can apply a nice renormalization formula to compute the Lagrange spectrum (see Theorem \ref{thm:renormalizedformula} and Remark \ref{RemWhyWeChoseOrbitB}), whereas we do not know if the formula holds for arbitrary number of squares, where in general the second property does not hold. The two properties mentioned above make the orbit B7 one of the simplest orbit closures whose study is both non-trivial and accessible by renormalization techniques. In the notation of \S~\ref{sec:action-sltwoz-primitive-origamis}, the other orbits with at most $7$ squares and satisfying the same two properties are the orbit A7, which has $54$ elements, and the orbit with 6 squares in $\cH(2)$, which has $36$ elements. Since the orbit B7 has $36$ elements we considered it as the easiest case to start from. 


\medskip
Our main result describes the bottom of the Lagrange spectrum $\cL(S)$. In order to state it, let us recall some continued fractions notation. 
Any irrational real number $\alpha$ admits an unique continued fraction expansion
$
\alpha=a_0+[a_1,a_2,\dots]
$,
where $a_0$ is the integer part of $\alpha$ and where
$$
[a_1,a_2,\dots]:=
\cfrac{1}{a_1+
\cfrac{1}{a_2+\dots}}.
$$
It is well known that the sequence of entries $(a_n)_{n\in\NN}$ is eventually periodic if and only if $\alpha$ is a quadratic irrational. In this case we write
$$
\alpha=a_0+[b_1,\dots,b_m,\overline{a_1,\dots,a_n}],
$$
where $b_1,\dots,b_m$ is the pre-periodic part of the sequence of entries and $a_1,\dots,a_n$ is the period of the periodic part. Consider the following tree positive values $\phi_1<\phi_2<\phi_\infty$, where
\begin{eqnarray*}
&&
\phi_1:=7+14\cdot[\overline{3,1}]=
\frac{7\sqrt{21}}{3}=
10,696277\pm10^{-6}
\\
&&
\phi_2:=
14\cdot[1,4,\overline{1,3}]=
14\cdot\frac{4\sqrt{21}+18}{5\sqrt{21}+21}=
11,582576\pm10^{-6}
\\
&&
\phi_\infty:=14\cdot[1,4,\overline{1,4,2,4}]=
14\cdot\frac{2\sqrt{210}+24}{2\sqrt{210}+35}=
11,593101\pm10^{-6}.
\end{eqnarray*}

Consider a bounded interval $G$ and denote its endpoints by $G^{(-)}:=\inf G$ and $G^{(+)}:=\sup G$. A gap in $\cL(S)$ is an open interval $G$ as above such that $G\cap\cL(S)=\emptyset$ and $G^{(-)}\in\cL(S)$ and $G^{(+)}\in\cL(S)$. In particular, if $G$ is a gap in $\cL(S)$ then its endpoints are elements of the spectrum.

\begin{theorem}
\label{TheoremMainTheorem}
Let $S$ be any origami in the orbit B7 in $\cH(2)$ and consider its Lagrange spectrum $\cL(S)$. The following holds
\begin{enumerate}
\item
We have $\min\cL(S)=\phi_1$ and the latter is an isolated point of $\cL(S)$. More precisely $G_0:=(\phi_1,\phi_2)$ is the first gap of $\cL(S)$.
\item
There exists a sequence of gaps $(G_k)_{k\geq0}$ such that
\begin{eqnarray*}
&&
\phi_1=G_{0}^{(-)}< \dots <G_{k}^{(-)}< G_{k}^{(+)}<G_{k+1}^{(-)}< G_{k+1}^{(+)}< \dots <\phi_\infty
\textrm{ for any }k\geq0
\textrm{ and }
\\
&&
G_{k}^{(+)}\to\phi_\infty
\textrm{ for }k\to\infty.
\end{eqnarray*}
\item
For any $k\geq1$ there exists a sequence of gaps $G_{k,n}$ with $n\geq1$ such that
\begin{eqnarray*}
&&
G_{k}^{(+)}<\dots < G_{k,n+1}^{(-)}< G_{k,n+1}^{(+)}<G_{k,n}^{(-)}< G_{k,n}^{(+)}< \dots <\phi_\infty
\textrm{ for any }n
\textrm{ and }
\\
&&
G_{k,n}^{(-)}\to G_{k}^{(+)}
\textrm{ for }n\to\infty.
\end{eqnarray*}
In particular $\phi_2$ is the second smallest value of $\cL(S)$ and it is not an isolated point of the spectrum.
\item
There exists a gap $G_\infty$ in $\cL(S)$ such that $G_\infty^{(-)}=\phi_\infty$. 
\end{enumerate}
\end{theorem}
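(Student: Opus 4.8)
The plan is to push the whole problem onto the combinatorial side and analyse it there. By Corollary~\ref{cor:renormalizedformulafororbitB7} (a specialization of the renormalization formula of Theorem~\ref{thm:renormalizedformula}), for $\alpha=[a_0;a_1,a_2,\dots]$ the value $L(S,\alpha)$ is a $\limsup$ over $n$ of an explicit function built, just as in the classical Cusick--Flahive picture (\S~1 of \cite{cusicflahive}), from the two one-sided continued fractions $[a_{n+1},a_{n+2},\dots]$ and $[0;a_n,a_{n-1},\dots,a_1]$, now twisted by the renormalized \emph{multiplicity} read off the $\sltwoz$-orbit of $S$; numerically the contributions have the shape ``additive constant $+\,14\times$ continued-fraction term''. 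Theorem~\ref{TheoremReductionToSubshift} then identifies $\cL(S)\cap(-\infty,\phi_\infty]$ with $\{\,\limsup_n f(\sigma^n\underline{x})\,\}\cap(-\infty,\phi_\infty]$, where $\underline{x}$ ranges over an explicit subshift of finite type on a finite alphabet of pairs (continued-fraction entry, multiplicity state) — in particular all relevant entries lie in $\{1,2,3,4\}$ and the multiplicity walk stays in a small subgraph — and $f$ is a fixed continuous function. Statements (1)--(4) thus become assertions about this finite-type set, and since $\cL(S)$ is closed (Theorem~1.5 of \cite{hubertmarcheseulcigrai}) it is enough, for each claimed endpoint, to exhibit a sequence realizing it and, for each claimed gap, to rule out any admissible sequence with value strictly inside.

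For part~(1) I would run a finite ``forbidden patterns'' analysis of the subshift: enumerate which finite blocks of entries from $\{1,2,3,4\}$, carried by which vertices of the multiplicity subgraph, can appear in a sequence whose $\limsup_n f$ is $<\phi_2$. This singles out the periodic orbit with continued-fraction part $\overline{3,1}$ sitting at a distinguished boundary vertex of the subgraph; evaluating $f$ there gives $\min\cL(S)=\phi_1=7+14[\overline{3,1}]=7\sqrt{21}/3$, the additive $7$ being precisely the contribution of that boundary vertex. Pushing the enumeration one step further, one finds that the smallest value above $\phi_1$ realized by any admissible pattern is $\phi_2=14[1,4,\overline{1,3}]=7+\sqrt{21}$; together with closedness of $\cL(S)$ this gives that $\phi_1$ is an isolated minimum and that $G_0=(\phi_1,\phi_2)$ is the first gap.

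For parts~(2)--(4) one zooms into the thin window $[\phi_2,\phi_\infty]$ and runs the Markoff-type self-similar bookkeeping. The number $\phi_\infty=14[1,4,\overline{1,4,2,4}]$ is the value of the distinguished admissible sequence whose entries are eventually $\overline{1,4,2,4}$, carried by the corresponding loop in the multiplicity subgraph; all the gaps arise by inserting a controlled finite defect into it. Fixing the defect and pushing its position to $+\infty$ produces a monotone family of quadratic values tending to $\phi_\infty$, and a quantitative continuity estimate for $f$ — longer agreement with the distinguished sequence moves the value monotonically, at a controlled rate, toward $\phi_\infty$ — shows these are separated by honest gaps $G_k$ with $G_k^{(+)}\to\phi_\infty$ (part~2). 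A second defect at a further moving position yields, for each $k\ge1$, the doubly indexed family of gaps $G_{k,n}$ accumulating from above at $G_k^{(+)}$ (part~3); the resulting sequence of genuine spectrum values decreasing to $\phi_2=G_0^{(+)}$ shows that $\phi_2$, which by~(1) is the second smallest value, is not isolated. Finally, a one-step estimate shows that exceeding $\phi_\infty$ costs at least a fixed $\varepsilon_0>0$ — no value of $\cL(S)$ lies in $(\phi_\infty,\phi_\infty+\varepsilon_0)$ — which with closedness yields the gap $G_\infty$ with $G_\infty^{(-)}=\phi_\infty$ (part~4).

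The genuine difficulty is the simultaneous bookkeeping of the Gauss dynamics on the entries and the induced walk on the multiplicity graph. Two points need the most care. First, one must be sure that the subgraph singled out in Theorem~\ref{TheoremReductionToSubshift} really captures the whole of $[\phi_1,\phi_\infty]$, i.e.\ that no excursion of the multiplicity walk outside it can lower a value into that window; this is handled by a comparison lemma showing such excursions strictly increase the relevant term of $f$. Second, the monotonicity estimates behind (2)--(3) — that agreeing longer with the $\overline{1,4,2,4}$-sequence moves the $\limsup$ monotonically, and, crucially, that the inserted defects produce open gaps rather than merely a dense set of accumulation points — must be made quantitative, and this is where the explicit shape of $f$ and the restriction to the alphabet $\{1,2,3,4\}$ (and to a bounded part of the multiplicity graph) are used essentially. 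The remainder is a finite, if lengthy, case analysis of periodic words, in direct analogy with Markoff's and Cusick--Flahive's treatment of the discrete bottom of the classical Lagrange spectrum.
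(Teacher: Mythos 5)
Your plan follows essentially the same route as the paper: reduce to the subshift on the two words $a=1,4,2,4$ and $b=1,3$ via Theorem \ref{TheoremReductionToSubshift}, obtain $\phi_1$ from the exceptional periodic loop and $\phi_2$ as the minimum of $L^\sigma$ on $\Xi_0$, and then produce the two generations of gaps by classifying sequences according to the maximal run of $a$'s occurring infinitely often and, within that, the adjacent runs of $b$'s --- precisely the statistics $\kappa$ and $\nu$ used in \S\ref{SecFirstGenerationCantor}--\ref{SecSecondGenerationCantor}. The one caveat is that the decisive quantitative content --- the lexicographic-order and concavity-of-homography estimates (Lemmas \ref{LemComparisonAandB}, \ref{LemLexicographicOrder}, \ref{lemdistortionhomography}, \ref{LemCenteringInTheMiddle}) that identify the extremal sequences $(ba^{(k)})^\infty$ and $\overline{b^\infty a^{(k+1)}b^\infty}$ and force the strict separation of consecutive intervals --- is announced as ``to be made quantitative'' rather than carried out.
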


Consider the quadratic irrational
$$
\eta_1=
7\cdot\frac{[1,4,2,\overline{1,5}]+5+[1,5,1,\overline{1,5}]}{4}
=
11,655309\pm10^{-6}
$$
and observe that $\eta_1>\phi_\infty$. The main intermediate result in the proof of Theorem \ref{TheoremMainTheorem} is Theorem \ref{TheoremReductionToSubshift} below.

\medskip

Consider the two finite words $a:=1,4,2,4$ and $b:=1,3$. Let $\Xi$ be the subset of $\{a,b\}^\ZZ$ of those sequences $\xi=(\xi_k)_{k\in\ZZ}$ such that for any $N$ there exists $n>N$ wit $\xi_n=a$. Let $\Xi_0\subset\Xi$ be the subset of those  $\xi\in\Xi$ satisfying the extra condition $\xi_0=a$. Let $\tilde{\sigma}:\Xi\to\Xi$ be the shift and consider its first return map $\sigma:\Xi_0\to\Xi_0$. If $\omega$ is a positive infinite word in the letters $1,2,3,4$ let $[\omega]\in(0,1)$ be the real number $\alpha$ such that the entries $a_n$ of the continued fraction $\alpha=[a_1,a_2,\dots]$ are the letters of $\omega$. Define two functions $[\cdot]_{+}:\Xi_0\to\RR$ and $[\cdot]_{-}:\Xi_0\to\RR$ by
\begin{eqnarray*}
&&
[\xi]_{+}:=[1,4,\xi_1,\xi_2,\dots]
\\
&&
[\xi]_{-}:=[1,4,\xi_{-1},\xi_{-2},\dots].
\end{eqnarray*}
Define a function $L^\sigma:\Xi_0\to\RR_+$ by
$$
L^\sigma(\xi):=
7\cdot
\big(
\limsup_{n\to+\infty}
[\sigma^n(\xi)]_{-}+[\sigma^n(\xi)]_{+}
\big).
$$

\begin{theorem}
\label{TheoremReductionToSubshift}
Consider data $(X_j,\alpha)$, where $X_j$ is an origami in the orbit B7 and $\alpha\in\RR$. If $L^\sigma(X_j,\alpha)>\phi_1$ then we have also $L(X_j,\alpha)\geq \phi_2$. Moreover, for those data $(X_j,\alpha)$ with
$
\phi_2\leq L(X_j,\alpha)<\eta_1
$
there exists $\xi\in\Xi_0$ such that
$$
L(X_j,\alpha)=L^\sigma(\xi).
$$
\end{theorem}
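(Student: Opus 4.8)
The plan is to leverage the renormalization formula for Lagrange values of square-tiled surfaces (Theorem~\ref{thm:renormalizedformula} and its specialization to Orbit~B7 in Corollary~\ref{cor:renormalizedformulafororbitB7}), which expresses $L(X_j,\alpha)$ as a limsup along the continued fraction expansion of $\alpha$ of a quantity built from the CF convergents and a \emph{multiplicity} cocycle over the $\sltwoz$-action on the orbit. The first step is to recall the explicit form of this formula: at each level $n$ of the CF algorithm, the relevant contribution is controlled by the geometry of $g_t\cdot r_{\arctan\alpha}\cdot X_{j(n)}$, where $j(n)$ tracks the position in the $\sltwoz$-orbit graph, and the size of the excursion is $7\cdot(\,[\text{backward CF tail}]_{-} + [\text{forward CF tail}]_{+}\,)$ up to the multiplicity weight coming from how horizontal saddle connections wind on the torus. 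The key structural input, already developed in the earlier sections, is that the orbit graph of B7 has a distinguished small subgraph (a cycle, essentially) on which the multiplicity is minimal, and any excursion that does \emph{not} eventually stay on this subgraph is forced to have value at least $\phi_2$: this is the role of the words $a=1,4,2,4$ and $b=1,3$ and the constraint defining $\Xi$.

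The proof then splits into the two assertions of the statement. For the first assertion (if $L^\sigma(X_j,\alpha)>\phi_1$ then $L(X_j,\alpha)\geq\phi_2$), I would argue by contraposition on the combinatorics: the only way to realize a value close to or equal to $\phi_1 = 7+14[\overline{3,1}]$ is for the CF expansion of $\alpha$ to be, from some point on, eventually $\overline{3,1}$ while the orbit point sits at the specific vertex of the graph that produces the coefficient $14$ and the minimal multiplicity; any deviation — i.e. any occurrence of the word $a$ infinitely often, which is exactly the defining condition of $\Xi$ — injects a larger term into the limsup, and a finite case analysis over the transitions of the subgraph shows the resulting value is bounded below by $\phi_2 = 14[1,4,\overline{1,3}]$. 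Concretely this amounts to checking that among all admissible bi-infinite concatenations of $a$'s and $b$'s with at least one $a$, the infimum of $L^\sigma$ equals $\phi_2$, attained by the word $\dots bbb\,a\,bbb\dots$ read with the right centering; this is the computation that produces the closed form $14\cdot\frac{4\sqrt{21}+18}{5\sqrt{21}+21}$.

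For the second assertion (realization: any value in $[\phi_2,\eta_1)$ comes from some $\xi\in\Xi_0$), the plan is to run the renormalization formula in reverse. Given $(X_j,\alpha)$ with $\phi_2\le L(X_j,\alpha)<\eta_1$, I would first show that the constraint $L<\eta_1$ forces the CF expansion of $\alpha$ to have, from some level onward, all its ``high'' partial quotients organized into blocks that are translates of $a$ or $b$ — here $\eta_1 = 7\cdot\frac{[1,4,2,\overline{1,5}]+5+[1,5,1,\overline{1,5}]}{4}$ is precisely the threshold above which a partial quotient $\ge 5$, or a forbidden digit, or a block not of type $a,b$ could occur while keeping the value from the dominant excursions; below $\eta_1$ every dominant excursion is pinned to the subgraph. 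This requires showing that the limsup in $L(X_j,\alpha)$ is actually achieved (up to arbitrarily good approximation) along a subsequence of levels at which the orbit point lies in the distinguished subgraph, so that the prefix $1,4$ in $[\cdot]_{\pm}$ is legitimate and the tails are exactly coded by a sequence $\xi$ over $\{a,b\}$; the condition that $a$ appears infinitely often (membership in $\Xi$) follows because a tail that is eventually all $b$'s would give value $\phi_1<\phi_2$, contradicting the hypothesis. Normalizing the centering so that $\xi_0=a$ puts $\xi$ in $\Xi_0$ and matches $L(X_j,\alpha)=L^\sigma(\xi)$ by the first-return reformulation of the limsup.

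The main obstacle will be the second assertion, specifically the reduction of an a~priori arbitrary pair $(X_j,\alpha)$ to the subgraph: one must rule out that a value below $\eta_1$ is produced by excursions that repeatedly leave and re-enter the subgraph through its ``boundary'' vertices, or by partial quotients equal to $5$ or larger occurring in a controlled but non-trivial pattern. This is where the precise numerical value of $\eta_1$ is forced — it is the smallest value attainable by such an ``off-subgraph'' configuration — and establishing this requires a careful but finite analysis of all transitions in the full orbit-B7 graph together with the monotonicity properties of the CF functions $[\cdot]_{\pm}$ under prepending digits. The first assertion, by contrast, is essentially a clean optimization over the two-letter subshift $\Xi$ and should follow from elementary estimates on $[1,3,1,3,\dots]$ versus $[1,4,2,4,\dots]$ tails once the subgraph structure from the earlier sections is in hand.
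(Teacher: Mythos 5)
Your plan follows the paper's own route: Corollary~\ref{cor:renormalizedformulafororbitB7}, then a cascade of threshold constants ($\eta_3>\eta_2>\eta_1$) that prunes the orbit graph down to the small graph $\cS$, then coding of the surviving paths by concatenations of the two loops $a=1,4,2,4$ and $b=1,3$, and finally the comparison showing the limsup is realized as $7([\xi]_-+[\xi]_+)$ while eventually-$b$ tails give exactly $\phi_1$. One detail worth making explicit when you write it up: the dominant term is an \emph{intermediate Farey} approximation (the $i=1$ term at $a_n=2$, landing on $C_4$ where $m^2(R\cdot C_4)=1$), not a Gauss approximation — all Gauss terms stay below $\phi_1/7$ — which is exactly why the prefix $1,4$ appears in $[\cdot]_\pm$; your formula encodes this correctly, so the approach is sound.
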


Thus, Theorem \ref{TheoremReductionToSubshift} shows that the study of the bottom of the Lagrange spectrum of origamis in the Orbit B7 can be reduced to the study of the values of the function $L^\sigma$ on a subshift of finite type. More precisely, let $\KK$ be the set of values of the function $L^\sigma:\Xi_0\to\RR_+$. By Theorem \ref{TheoremReductionToSubshift}, $\cL(S) \cap [ \phi_2, \eta_1) = \KK$ for any origami $S$ in the Orbit B7. In this paper we could not completely describe the set $\KK$, nevertheless we observed a certain autosimilarity in it and we leave the following as open problems.

\begin{conjecture}\label{Q1}
We conjecture that $\KK$ is a Cantor set. The families of gaps $(G_k)_{k\geq2}$
and $(G_{k,n})_{k\geq2,n\geq1}$ in Theorem \ref{TheoremMainTheorem} seem to give respectively the first and the second generation of an iterative construction.
\end{conjecture}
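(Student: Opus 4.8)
This is stated as a conjecture; we only outline the program we would follow and indicate where it is currently blocked.

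\emph{Topological reduction.} By Theorem~\ref{TheoremReductionToSubshift}, $\KK=\cL(S)\cap[\phi_2,\eta_1)=L^\sigma(\Xi_0)$. Writing $L^\sigma(\xi)=7\,\limsup_n f(\sigma^n\xi)$ with $f(\eta):=[\eta]_{-}+[\eta]_{+}$, we get $\KK\subseteq[\,7\inf_{\Xi_0}f,\ 7\sup_{\Xi_0}f\,]$. A digit-by-digit comparison of the continued fractions built from the blocks $a=1,4,2,4$ and $b=1,3$ (using that enlarging a digit in an even position increases the value and in an odd position decreases it) shows that $\sup_{\Xi_0}f=f(\overline a)=\phi_\infty/7$, attained at the constant word, and $\inf_{\Xi_0}f=\phi_2/7$, approached along words whose occurrences of $a$ become arbitrarily sparse, for which $\limsup_n f(\sigma^n\xi)=\phi_2/7$ and hence $L^\sigma(\xi)=\phi_2$. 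Thus $\KK\subseteq[\phi_2,\phi_\infty]$ with both endpoints attained, and since $\cL(S)$ is closed (Theorem~1.5 of~\cite{hubertmarcheseulcigrai}) the set $\KK=\cL(S)\cap[\phi_2,\phi_\infty]$ is compact. It then remains to prove that $\KK$ is totally disconnected and has no isolated points, after which Brouwer's characterization of the Cantor set finishes the argument.

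\emph{A self-similar scheme.} Extending $f$ continuously to the full two-shift $\{a,b\}^\ZZ$ and using compactness, one obtains $L^\sigma(\xi)=7\max_{\eta\in\omega(\xi)}f(\eta)$, where $\omega(\xi)$ is the $\omega$-limit set of the $\sigma$-orbit of $\xi$; with density of periodic orbits this reduces the description of $\KK$ to understanding the values $7f(\eta)$ for periodic $\eta$ organised by combinatorial type, as the discrete part of the classical spectrum is analysed in \S1 of~\cite{cusicflahive}. The families $(G_k)_{k\geq2}$ and $(G_{k,n})_{k\geq2,n\geq1}$ of Theorem~\ref{TheoremMainTheorem} correspond to classifying such $\eta$ first by the lengths of the maximal runs $b^{k}$ of consecutive $b$'s and then by the next finer data. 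The plan is to promote the renormalization formula of Theorem~\ref{thm:renormalizedformula} and Corollary~\ref{cor:renormalizedformulafororbitB7} to a genuine renormalization operator: prepending a fixed admissible pattern $w$ of blocks amounts to composing the corresponding product of $\sltwoz$-matrices, producing a hyperbolic Möbius map $R_w$; one would show that $R_w$ identifies the part of $\KK$ lying above $\phi_2$ with the part of $\KK$ lying in the ``level-$|w|$'' interval indexed by $w$, the one cut out by the first $|w|$ generations of gaps. Because the $R_w$ are compositions of hyperbolic Möbius maps, they are uniform contractions with uniformly bounded distortion; inductively the level-$m$ intervals would then have lengths decaying geometrically in $m$, so the union of all gaps would be dense in $[\phi_2,\phi_\infty]$ and every gap endpoint would be approached by gap endpoints of the next generation — the accumulations $G_k^{(+)}\to\phi_\infty$ and $G_{k,n}^{(-)}\to G_k^{(+)}$ being the cases $m=0,1$. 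This would give empty interior (hence total disconnectedness, $\KK$ being compact) and, at the same time, that no point of $\KK$ is isolated, since every point is squeezed between endpoints of arbitrarily small gaps, themselves non-isolated; thus $\KK$ would be a Cantor set with $(G_k)_{k\geq2}$ and $(G_{k,n})_{k\geq2,n\geq1}$ as its first two generations of gaps.

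\emph{The main obstacle.} The step we cannot carry out in general is exactly the inductive claim that each level-$m$ interval is genuinely subdivided by its countable family of gaps into level-$(m+1)$ intervals \emph{with no leftover subinterval} — equivalently, that for no $\xi\in\Xi_0$ does the set of values $7\max_{\eta\in\omega(\xi)}f(\eta)$ fill an interval as the relevant finite part of $\xi$ is varied. This is the same difficulty that makes the region of the classical Lagrange spectrum just above Markoff's discrete part notoriously hard, where only partial (e.g. Hausdorff-dimension) results such as those of~\cite{gugu} are available. The extra complication here is that the renormalization maps $R_w$ are Möbius rather than affine, so one controls distortion but not exact self-similarity: guaranteeing that at every scale the gaps separating the sub-pieces do not close up faster than the sub-pieces shrink — so that the Cantor structure persists through all generations and never degenerates into an interval — is the crucial point that remains open, and is why the statement is only conjectural.
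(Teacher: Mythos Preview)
This statement is a \emph{conjecture} in the paper, not a theorem; the paper gives no proof, only constructs the first two levels $\KK(1)$ and $\KK(2)$ of what it believes is a Cantor-set construction (Lemmas~\ref{LemFirstGenerationCantor} and~\ref{LemSecondGenerationCantor}) and explicitly leaves the rest open. You correctly recognise this and present a strategic outline rather than a proof, flagging the point where the argument is blocked. In that sense there is no ``paper's proof'' to compare against, and your treatment is appropriate.

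Your outline is broadly consistent with the paper's own setup: reduce to the subshift via Theorem~\ref{TheoremReductionToSubshift}, stratify $\Xi_0$ by combinatorial invariants, show each stratum produces a closed interval separated from the others by a gap, and try to iterate. One slip worth correcting: the paper's first-generation invariant $\kappa$ counts the longest recurrent run of the letter $a$ (not $b$ as you write), and larger $\kappa$ gives \emph{larger} values of $L^\sigma$, accumulating upward to $\phi_\infty$; the second invariant $\nu$ then records the shortest recurrent $b$-run adjacent to a maximal $a$-block. Swapping the roles of $a$ and $b$ here would reverse the monotonicity you need. Your identification of the obstacle --- showing that at every level the family of gaps exhausts the complement so that no subinterval survives to the intersection --- is exactly the point the paper cannot settle and is the substance of the conjecture.
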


\begin{question}\label{Q2} 
Does it exists a constant $c>0$ such that for any real number $\phi$ with $\phi_2<\phi<\phi_\infty$ we have for the Hausdorff dimension
$$
HD\big(\KK\cap(\phi_2,\phi)\big)>c?
$$
\end{question}

\subsection*{Structure of the rest of the paper}

In \S~\ref{Sec:FormulaWithRenormalization}, the main result is Theorem \ref{thm:renormalizedformula}, which holds for any primitive square-tiled surface $S$ and gives a formula to compute $L(S,\alpha)$ in terms of the continued fraction expansion of $\alpha$. In general, our formula can be applied only for slopes $\alpha$ such that $L(\TT^2,\alpha)$ is bigger that a given value, which depends only on the orbit of $S$ under $\sltwoz$. The starting point is Lemma \ref{Lem:Lem5.10HubertMarcheseUlcigrai}, which expresses $L(S,\alpha)$ in terms of small values of the quantity
$
m^2(p/q,S)\cdot q\cdot|q\alpha-p|
$
when $p/q$ varies among rational numbers. The factor $m(p/q,S)$ is called \emph{multiplicity} of the slope $p/q$ and is defined in \S~\ref{Sec:MultiplicityRationalDirections}, where we also state and prove the covariance of the multiplicity for primitive origamis in the same orbit under $\sltwoz$. Some general facts about $\sltwoz$-orbits are recalled in \S~\ref{sec:action-sltwoz-primitive-origamis}.

In \S~\ref{sec:formula-for-orbit-B7} we state Corollary \ref{cor:renormalizedformulafororbitB7}, which says that for the origamis $S$ in the orbit B7 of $\cH(2)$ the formula of Theorem \ref{thm:renormalizedformula} gives the value $L(S,\alpha)$ \emph{for any} $\alpha$. In \S~\ref{Sec:DescriptionOfOrbitB7} the orbit B7 of $\cH(2)$ is described explicitly. In \S~\ref{Sec:PeriodicElements} we consider a quadratic irrational $\alpha$, or equivalently a closed geodesics in moduli space, and we establish a formula, namely Equation \eqref{eqFormulaLagrangeConstantQuadrIrrat}, which gives $L(S,\alpha)$ as the maximum over a finite set of values. This is very useful for a numerical study of the Lagrange spectrum, which we do not attempt in this paper.

In \S~\ref{Sec:ProofOfTheoremReductionToSubshift} we prove Theorem \ref{TheoremReductionToSubshift}. The orbit B7 has the structure of an oriented graph, whose arcs represent the action of the two generators $T$ and $R$ of $\sltwoz$. We introduce the \emph{intermediate graph} $\cI$, whose vertices are 8 special elements of the orbit B7 and whose arrows are 30 special combinations of the operations $T$ and $R$, and its subgraph $\cS$, called \emph{small graph}, which has 3 vertices and $6$ arrows. Proposition \ref{PropReductionToIntermediateGraph} says that the renormalization of data $(S,\alpha)$ with $L(S,\alpha)<\eta_3$ only contains operations appearing in the intermediate graph, where $\eta_3$ is a quadratic irrational such that $\eta_3>\eta_1>\phi_\infty$. Then Proposition \ref{propReductionToSmallgraph} reduces the possible operations to the small graph under the condition $L(S,\alpha)<\eta_1$. After such drastic reduction of the number of possible operations, in \S~\ref{Sec:EndProofTheoremREductionToSubshift} we complete the proof of Theorem \ref{TheoremReductionToSubshift}, with arguments similar in the spirit to the analysis given by Cusick and Flahive in \S~1 of \cite{cusicflahive} for the classical Lagrange spectrum.

In \S~\ref{Sec:ProofOfMainTheorem} we prove Theorem \ref{TheoremMainTheorem}. In \S~\ref{Sec:LexicographicOrder} we introduce a lexicographic order on half-infinite words in the letters $a,b$, where $a=1,4,2,4$ and $b=1,3$. Such order enables to establish an order on the values of the function $L^\sigma:\Xi_0\to\RR_+$. Since we believe that the set $\KK$ of such values is a Cantor set,  we present the proof as an iterative construction of what we believe are the first two levels of a Cantor set. The first level is given in \S~\ref{SecFirstGenerationCantor} and the second level is given in \S~\ref{SecSecondGenerationCantor}. Finally in \S~\ref{EndOfTheProofOfMainTheorem} we conclude with the proof Theorem \ref{TheoremMainTheorem}, which essentially follows simply by rephrasing the previous results.

\section{A formula in terms of the action of $\sltwoz$ }
\label{Sec:FormulaWithRenormalization}

\subsection{Continued fraction and $\sltwoz$}
\label{Sec:ContinuedFractionAndSL(2,Z)}

Consider an irrational number
$
\alpha=a_0+[a_1,a_2,\dots]
$,
where $a_0$ is the integer part of $\alpha$ and the sequence $(a_n)_{n\in\NN^\ast}$ of positive integers corresponds to the fractional part. For any $n\geq1$ we set
$$
[a_1,\dots,a_n]:=
\cfrac{1}{a_1+
\cfrac{1}{a_2+\dots+\cfrac{1}{a_n}}}.
$$
The $n$-th \emph{Gauss approximation} of $\alpha$ is
$
p_n/q_n:=a_0+[a_1,a_2,\dots,a_n]
$.
For any such $n$, the \emph{intermediate Farey approximations} are
$$
\frac{p_{n,i}}{q_{n,i}}:=
a_0+[a_1,a_2,\dots,a_{n-1},i]
\textrm{ with }
1\leq i< a_n.
$$

Consider the action of $\sltwoz$ on $\RR$ by homographies, that is
$$
\begin{pmatrix}
a & b\\
c & d
\end{pmatrix}
\alpha:=
\frac{a\alpha+b}{c\alpha+d}.
$$
Consider the elements
$$
T:=
\begin{pmatrix}
1 & 1\\
0 & 1
\end{pmatrix}
\textrm{ , }
V:=
\begin{pmatrix}
1 & 0\\
1 & 1
\end{pmatrix}
\textrm{ and }
R:=
\begin{pmatrix}
0 & -1\\
1 & 0
\end{pmatrix}
$$
and recall that $\{T,V\}$ are a set of generators, so that also $\{T,R\}$ generate, observing that
$$
V=R\circ T^{-1}\circ R^{-1}.
$$

The following Lemma holds both for Gauss and Farey approximations. For simplicity we state it just for the former.

\begin{lemma}\label{lemslopesandpathsinSL(2,Z)}
If $\alpha=a_0+[a_1,a_2,\dots]$ then the sequence of co-slopes $p_n/q_n$ of the Gauss approximations of $\alpha$ is given by
\begin{eqnarray*}
&&
p_n/q_n=
T^{a_0}\circ V^{a_1}\circ\dots\circ
V^{a_{n-1}}\circ T^{a_n}\cdot 0,
\textrm{ for even }
n
\\
&&
p_n/q_n=
T^{a_0}\circ V^{a_1}\circ\dots\circ
T^{a_{n-1}}\circ V^{a_n}\cdot \infty
\textrm{ for odd }
n.
\\
\end{eqnarray*}
\end{lemma}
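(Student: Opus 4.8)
The plan is to prove Lemma~\ref{lemslopesandpathsinSL(2,Z)} by induction on $n$, exploiting the recursive structure of continued fractions and translating the ``$+1$ in a partial quotient'' operation into left-multiplication by the matrices $T$ and $V$. First I would set up notation: write $\beta_k := [a_k, a_{k+1}, \dots, a_n]$ for the tail continued fractions, so that $\beta_k = 1/(a_k + \beta_{k+1})$ and $p_n/q_n = a_0 + \beta_1$; note that $\beta_n = 1/a_n = T^{a_n}\cdot 0 = V^{a_n}\cdot \infty$ gives the base case, since $T^m = \begin{pmatrix} 1 & m \\ 0 & 1\end{pmatrix}$ sends $0 \mapsto m$ hence its inverse $\dots$ — more to the point, $T^{a_n}\cdot 0 = 0 + a_n \cdot(\text{nothing})$, so I should be careful: actually the cleaner statement is that $\beta_1$ is obtained by reading the word from the outside in. Let me instead phrase the induction outward: I claim that for $1 \le k \le n$,
\[
a_0 + \beta_1 = T^{a_0} \circ V^{a_1} \circ T^{a_2} \circ \cdots \circ \left(\text{$k$-th factor}\right) \cdot \gamma_k,
\]
where $\gamma_k = \beta_k$ or $1/\beta_k$ depending on the parity of $k$, and the factors alternate between powers of $V$ and powers of $T$.

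The key observation driving the inductive step is the pair of identities $T^a \cdot x = a + x$ and $V^a \cdot x = \dfrac{x}{ax+1} = \dfrac{1}{a + 1/x}$, which one checks directly from the definition of the homographic action. The first says: prepending a partial quotient as an \emph{additive} term (which is what happens at even depth, where $\alpha = a_0 + [\dots]$ and the continued fraction ``resets'' to integer-part form) corresponds to applying $T$ to that power. The second says: prepending a partial quotient \emph{inside} a reciprocal, i.e. replacing $[a_k, a_{k+1}, \dots]$ by forming $1/(a_k + [a_{k+1}, \dots])$, corresponds to applying $V$ to the appropriate power — provided one has first passed to the reciprocal, which is exactly the bookkeeping that the alternation of $T$ and $V$ (and the alternation of $\gamma_k$ between $\beta_k$ and $1/\beta_k$) handles automatically. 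Concretely: if at step $k$ (odd, say) I have $a_0 + \beta_1 = (\text{word}_{k-1}) \circ V^{a_{k-1}} \cdot (1/\beta_{k-1})$ with $1/\beta_{k-1} = a_{k-1} + \beta_k$, then $V^{a_{k-1}} \cdot (a_{k-1} + \beta_k)$; hmm — I need to reconcile the index. I would instead carry out the induction in the form: peeling one partial quotient off the tail multiplies the accumulated word on the right by either $T^{a_k}$ or $V^{a_k}$ and shifts the remaining tail, terminating when the tail is empty and the argument is $0$ (even case) or $\infty$ (odd case), since $[a_1,\dots,a_n] = V^{a_1}\circ T^{a_2} \circ \cdots$ evaluated at the endpoint.

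The main obstacle — really the only subtlety — is getting the parity bookkeeping exactly right: tracking which of $0$ and $\infty$ appears as the final argument, whether the last factor is a power of $T$ or of $V$, and making sure the reciprocal-versus-not alternation is consistent at every stage. A clean way to sidestep tedium is to observe that $R \cdot 0 = \infty$ and $R\cdot\infty = 0$, together with the given relation $V = R \circ T^{-1} \circ R^{-1}$ (equivalently $R^{-1} \circ V \circ R = T^{-1}$, so $V^a = R \circ T^{-a} \circ R^{-1}$); one can then rewrite an alternating $T/V$ word as a word in $T^{\pm}$ and $R$ acting on $0$, where the continued fraction recursion $x \mapsto a + 1/x$ becomes transparently $T^a \circ R \cdot x$ (since $R \cdot x = -1/x$, up to the sign which $\sltwoz$-homographies absorb because $\begin{pmatrix}a&b\\c&d\end{pmatrix}$ and its negative act identically). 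I would then note that $T^{a_0} R T^{a_1} R \cdots$ applied to $0$ or $\infty$ unwinds to exactly $a_0 + [a_1, \dots, a_n]$, and finally re-expand the $R$'s back into $V$'s via $V^{a} = R\,T^{-a}\,R^{-1}$ to recover the stated alternating form, absorbing the leftover boundary $R$'s into the choice of $0$ versus $\infty$. The Farey case is identical, replacing the last partial quotient $a_n$ by an intermediate value $i < a_n$, which changes nothing in the argument.
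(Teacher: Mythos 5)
Your proposal is correct, but it takes a different (equally elementary) route from the paper. The paper's proof never unfolds the nested continued fraction: it packages the classical three-term recursion $p_n=a_np_{n-1}+p_{n-2}$, $q_n=a_nq_{n-1}+q_{n-2}$ as right-multiplication of the convergent matrix $\bigl(\begin{smallmatrix}p_{n-1}&p_n\\ q_{n-1}&q_n\end{smallmatrix}\bigr)$ alternately by $T^{a_n}$ and $V^{a_n}$, starting from the identity matrix (i.e.\ from $(p_{-2},q_{-2})=(0,1)$, $(p_{-1},q_{-1})=(1,0)$); the evaluation at $0$ or $\infty$ then just extracts the appropriate column. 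Your argument instead works pointwise with the homographic action, using $T^a\cdot x=a+x$ and $V^a\cdot x=1/(a+1/x)$ to peel partial quotients off the nested fraction, with base cases $T^{a_n}\cdot 0=a_n$ (even $n$) and $V^{a_n}\cdot\infty=1/a_n$ (odd $n$). Both are valid; the paper's version buys the full matrix identity (hence both $p_n/q_n$ and $p_{n-1}/q_{n-1}$ at once, which is convenient elsewhere in the paper), while yours is more self-contained and makes the parity bookkeeping visible rather than hiding it in the recursion. Your detour through $R$ and $V=R\circ T^{-1}\circ R^{-1}$ is unnecessary for the lemma itself, though it does anticipate the form $g(a_1,\dots,a_n)$ used later in the paper. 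One small point of hygiene: state the inductive hypothesis once and cleanly (e.g.\ that $V^{a_k}\circ T^{a_{k+1}}\circ\cdots$ applied to the appropriate endpoint equals $[a_k,\dots,a_n]$, for each $k$ of the right parity), rather than leaving the reconciliation of indices to the reader as your draft currently does.
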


\begin{proof}
Just recall that the recursive relations satisfied by the convergents show that the sequence $(p_n,q_n)$  is obtained by setting $(p_{-2},q_{-2})=(0,1)$ and $(p_{-1},q_{-1})=(1,0)$ and then applying for any $k\in\NN$ the recursive relations
$$
\begin{pmatrix} p_{2k-1} & p_{2k}\\
q_{2k-1} & q_{2k}
\end{pmatrix}=
\begin{pmatrix}
p_{2k-1} & p_{2k-2}\\
q_{2k-1} & q_{2k-2}
\end{pmatrix}
\circ T^{a_{2k}}
\textrm{ and }
\begin{pmatrix}
p_{2k+1} & p_{2k}\\
q_{2k+1} & q_{2k}
\end{pmatrix}=
\begin{pmatrix}
p_{2k-1} & p_{2k}\\
q_{2k-1} & q_{2k}
\end{pmatrix}
\circ V^{a_{2k+1}}.
$$
\end{proof}

\subsection{Action of $\sltwoz$ on primitive origamis}
\label{sec:action-sltwoz-primitive-origamis}

The \emph{Veech group} of a translation surface $S$ is the subgroup $\veech(S)$ of $\sltwor$ of those $G$ such that $G\cdot S=S$. It is easy to see that $\veech(S)$ is never co-compact and it is also well-known that the quotient $\sltwor/\veech(S)$ has finite volume if and only if the orbit $\sltwor\cdot S$ is closed in its stratum. This is always true for origamis. We say that a square-tiled surface $S$ is \emph{primitive} if $\langle\hol(S)\rangle=\ZZ^2$, that is the set of periods of $S$ generates $\ZZ^2$ as subgroup of $\RR^2$. This implies that the Veech group $\veech(S)$ of $S$ is a finite-index subgroup of $\sltwoz$. For any origami $S$ the veech group $\veech(S)$ and $\ZZ^2$ share a common subgroup of finite index (see \cite{hubertlelievre}). The action of $\sltwor$ on translation surfaces induces an action of $\sltwoz$ on square tiled surfaces. If $S$ is any square tiled surface, denote $\cO(S)$ its orbit under $\sltwoz$, that is
$$
\cO(S):=\{Y=A\cdot S\textrm{ ; }A\in\sltwoz\}
$$
The number of squares $N$ of an origami $S$ is obviously preserved under the action of $\sltwoz$.

\begin{lemma}[Hubert-Leli\`evre, \cite{hubertlelievre}]\label{lemhubertlelievre}
The $\sltwoz$-orbit $\cO(S)$ of a primitive square-tiled surface $S$ with $N$ squares is the set of primitive square-tiled surfaces with $N$ squares in its $\sltwor$-orbit.
\end{lemma}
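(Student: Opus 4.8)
The plan is to establish the two inclusions separately. The key ingredient, which one must extract from the structural results on origamis in \cite{hubertlelievre}, is the elementary characterization of primitivity: a translation surface $Y$ is a primitive square-tiled surface if and only if the subgroup of $\RR^2$ generated by the holonomy vectors of its saddle connections is exactly $\ZZ^2$. That this subgroup is a lattice precisely when $Y$ is square-tiled uses that any two singularities can be joined by a concatenation of saddle connections, so that the group generated by $\hol(Y)$ coincides with the full group of relative periods; one then recovers the ramified covering $Y\to\RR^2/\langle\hol(Y)\rangle$ by developing the flat structure, with $dz$ pulling back to the holomorphic $1$-form, the degree of the covering being the ratio of $\area(Y)$ to the area of the torus $\RR^2/\langle\hol(Y)\rangle$. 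The second ingredient is the transformation rule $\hol(G\cdot S)=G\cdot\hol(S)$ for $G\in\sltwor$: the $\sltwor$-action post-composes the flat charts with $G$, which sets up a bijection between the saddle connections of $S$ and those of $G\cdot S$ under which holonomy vectors are multiplied by $G$, and $\hol$ is intrinsic to the flat structure (hence insensitive to cutting and pasting).

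Granting these two facts, the inclusion asserting that $\cO(S)$ is contained in the set of primitive square-tiled surfaces with $N$ squares lying in $\sltwor\cdot S$ is quick: for $A\in\sltwoz$ and $Y=A\cdot S$ one has $Y\in\sltwor\cdot S$ trivially, and $\langle\hol(Y)\rangle=A\cdot\langle\hol(S)\rangle=A\cdot\ZZ^2=\ZZ^2$, so $Y$ is again a primitive square-tiled surface; moreover the number of squares of $Y$, being the degree of the covering $Y\to\RR^2/\ZZ^2$, is unchanged because $A$ is area-preserving and maps $\ZZ^2$ onto itself, so $Y$ still has $N$ squares.

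For the reverse inclusion, let $Y$ be a primitive square-tiled surface with $N$ squares and write $Y=G\cdot S$ with $G\in\sltwor$. Applying the transformation rule and then the characterization of primitivity to both $S$ and $Y$ gives
$$
\ZZ^2=\langle\hol(Y)\rangle=G\cdot\langle\hol(S)\rangle=G\cdot\ZZ^2 ,
$$
so $G$ stabilizes $\ZZ^2$ as a subgroup; hence $Ge_1,Ge_2$ is a $\ZZ$-basis of $\ZZ^2$, i.e. $G$ is an integer matrix with integer inverse, and $\det G=1$ forces $G\in\sltwoz$, whence $Y=G\cdot S\in\cO(S)$. The main obstacle, as I see it, is not in this linear-algebra step but in the careful justification of the first ingredient — in particular the passage from the group generated by saddle-connection holonomies to the full relative period group — and it is worth noting that primitivity of $S$ (and of $Y$) is used crucially: for a non-primitive origami the generated group is only a finite-index sublattice of $\ZZ^2$, and then $G\cdot S$ need not be an integer-matrix image of $S$.
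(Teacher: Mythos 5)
Your argument is correct. Note that the paper itself gives no proof of this lemma: it is quoted from the reference \cite{hubertlelievre}, so there is nothing internal to compare against; your write-up is essentially the standard argument one finds there. The two inclusions are handled properly: the forward one via covariance of $\hol$ and area-preservation (so the number of squares, being the covering degree, is unchanged), and the reverse one via the key observation that primitivity of both $S$ and $Y=G\cdot S$ forces $G\cdot\ZZ^2=\ZZ^2$, hence $G\in{\rm GL}(2,\ZZ)$ with determinant one. You are also right that primitivity is the crucial hypothesis, since for a non-primitive origami $\langle\hol(S)\rangle$ is only a proper sublattice and the conclusion fails. One small imprecision worth tightening: you write that $\langle\hol(Y)\rangle$ is ``a lattice precisely when $Y$ is square-tiled,'' but being a lattice is not sufficient for being square-tiled in the paper's sense (tiled by unit squares requires $\langle\hol(Y)\rangle\subseteq\ZZ^2$, not merely discreteness of rank two; a surface with period lattice $\tfrac12\ZZ^2$ covers a smaller torus but not $\RR^2/\ZZ^2$). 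This does not affect your proof, since you only ever invoke the exact characterization $\langle\hol(Y)\rangle=\ZZ^2$, for which the developing-map construction onto $\RR^2/\ZZ^2$ does give the required ramified covering.
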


According to Lemma \ref{lemhubertlelievre} above, the set of primitive origamis is preserved under the action of $\sltwoz$, moreover each orbit $\cO(S)$ is finite and we have the identification
$$
\cO(S)=\sltwoz/\veech(S).
$$
The action of $\sltwoz$ passes to the quotient $\cO(S)$. We chose the generators $\{T,R\}$ for $\sltwoz$. In terms of $T$ and $R$, this action is represented by a oriented graph $\cG(S)$ whose vertices are the elements of $\cO(S)$ and whose oriented edges correspond to the operations $Y\mapsto T\cdot Y$ and $Y\mapsto R\cdot Y$ for $Y\in \cO(S)$. Orbits of primitive origami in $\cH(2)$ with prime number of squares $N$ have been classified in \cite{hubertlelievre}, according to the number of integer \emph{Weierstrass points}, then the classification was extended to the case of non prime $N$ in \cite{mcmullen}. For any $N=3$ and any even $N$ there exists only one orbit. For any odd $N\geq5$ there are two orbits. The first, called AN, contains primitive origamis with only one integer Weierstrass point. The second, called BN, contains primitive origamis with exactly three integer Weierstrass points. We refer to \cite{hubertlelievre} and \cite{mcmullen} for more details. The complete description of the orbit B7 is given in \S~\ref{Sec:DescriptionOfOrbitB7}, which is all we need in this paper.

\subsubsection{Cusps}

Let $S$ be a primitive origami. The cusps of $\HH/\veech(S)$ correspond to conjugacy classes under $\veech(S)$ of its \emph{primitive parabolic elements}, that is the elements in $\veech(S)$ with trace equal to $\pm2$, where primitive means not powers of other parabolic elements of $\veech(S)$. If $S$ is a primitive origami the eigendirections of parabolic elements of $\veech(S)$ are exactly the elements of $\QQ$. Therefore the cusps of $\HH/\veech(S)$ correspond to equivalence  classes for the homographic action $p/q\mapsto A\cdot p/q$ of $\veech(S)$ on $\QQ$. Lemma \ref{lemzorich} below gives a representation of cusps in terms of the action of $\sltwoz$ (a proof can be found in \cite{hubertlelievre}).

\begin{lemma}[Zorich]\label{lemzorich}
Let $S$ be a reduced origami. Then the cusps of $\HH/\veech(S)$ are in bijection with the $T$-orbits in $\cO(S)$.
\end{lemma}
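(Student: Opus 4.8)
The plan is to describe each side of the asserted bijection as a set of double cosets in $\sltwoz$ and then to identify the two descriptions via the inversion map $g\mapsto g^{-1}$.

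First I would recall, as already observed in the discussion preceding the statement, that since $S$ is primitive the group $\veech(S)$ has finite index in $\sltwoz$ and its parabolic fixed points on $\partial\HH$ are exactly $\PP^1(\QQ)=\QQ\cup\{\infty\}$; hence the cusps of $\HH/\veech(S)$ are in natural bijection with the orbits of the homographic action of $\veech(S)$ on $\PP^1(\QQ)$. This is essentially the only non-elementary input, and it is classical for finite-index subgroups of $\sltwoz$. Now $\sltwoz$ acts transitively on $\PP^1(\QQ)$ — any coprime pair $(p,q)$ is the first column of an integer matrix of determinant one, and such a matrix sends $\infty$ to $p/q$ — with $\mathrm{Stab}_{\sltwoz}(\infty)=\{\pm T^{n}:n\in\ZZ\}=\langle-\mathrm{Id},T\rangle$. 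Therefore the orbit map $A\mapsto A\cdot\infty$ identifies $\PP^1(\QQ)$ with the left $\sltwoz$-set $\sltwoz/\langle-\mathrm{Id},T\rangle$, so that the $\veech(S)$-orbits on $\PP^1(\QQ)$, and hence the cusps, are in bijection with the double cosets $\veech(S)\backslash\sltwoz/\langle-\mathrm{Id},T\rangle$. Since origamis in $\cH(2)$ are hyperelliptic we have $-\mathrm{Id}\in\veech(S)$, and as $-\mathrm{Id}$ is central this set of double cosets coincides with $\veech(S)\backslash\sltwoz/\langle T\rangle$. (Equivalently: $\cO(S)$ is a $\psltwoz$-set and $\mathrm{Stab}_{\psltwoz}(\infty)=\langle T\rangle$.)

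Next I would describe the other side. By definition $\cO(S)=\sltwoz\cdot S$, and since $\veech(S)\le\sltwoz$ is precisely the stabiliser of $S$, the orbit map is an isomorphism of left $\sltwoz$-sets $\sltwoz/\veech(S)\cong\cO(S)$, $A\veech(S)\mapsto A\cdot S$. Under this identification the $T$-orbit of $A\cdot S$, namely $\{T^{n}A\cdot S:n\in\ZZ\}$, corresponds to the subset $\langle T\rangle A\veech(S)\subset\sltwoz$; hence the $T$-orbits in $\cO(S)$ are in bijection with the double cosets $\langle T\rangle\backslash\sltwoz/\veech(S)$. Finally, inversion $g\mapsto g^{-1}$ of $\sltwoz$ is an involution which, because $\langle T\rangle$ and $\veech(S)$ are subgroups, descends to a bijection $\langle T\rangle\backslash\sltwoz/\veech(S)\to\veech(S)\backslash\sltwoz/\langle T\rangle$, $\langle T\rangle A\veech(S)\mapsto\veech(S)A^{-1}\langle T\rangle$. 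Composing the three bijections produces the asserted bijection between $T$-orbits in $\cO(S)$ and cusps of $\HH/\veech(S)$. Concretely, the $T$-orbit of $A\cdot S$ is matched with the cusp whose parabolics fix the rational direction $A^{-1}\cdot\infty$ of $S$ — equivalently, the direction of $S$ whose cylinder decomposition is carried by $A$ onto the horizontal one of $A\cdot S$ — and the length of the $T$-orbit equals the width of that cusp.

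The only genuine content is this bookkeeping; the points requiring care are the distinction between left and right cosets (handled by the inversion step) and the $\pm\mathrm{Id}$ issue, i.e. replacing $\mathrm{Stab}_{\sltwoz}(\infty)$ by $\langle T\rangle$, which uses $-\mathrm{Id}\in\veech(S)$. The classical fact that the cusps of a finite-index subgroup of $\sltwoz$ correspond bijectively to its orbits on $\PP^1(\QQ)$ is recalled in the text above, and I would simply quote it (or the reference \cite{hubertlelievre}) rather than reprove it.
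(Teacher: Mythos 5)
The paper does not actually prove this lemma: it is quoted from \cite{hubertlelievre} (``a proof can be found in \cite{hubertlelievre}''), so there is no in-paper argument to compare against. Your double-coset proof is the standard one and is correct: cusps correspond to $\veech(S)$-orbits on $\PP^1(\QQ)\cong \sltwoz/\mathrm{Stab}(\infty)$, $T$-orbits in $\cO(S)\cong\sltwoz/\veech(S)$ correspond to $\langle T\rangle\backslash\sltwoz/\veech(S)$, and inversion matches the two sets of double cosets; the identification of the $T$-orbit length with the cusp width is also right and is consistent with the widths $w(X)$ used later for orbit B7. The one point to be careful about is exactly the one you flag: $\mathrm{Stab}_{\sltwoz}(\infty)=\langle -\mathrm{Id},T\rangle$, and replacing it by $\langle T\rangle$ genuinely requires $-\mathrm{Id}\in\veech(S)$ (otherwise $S$ and $-\mathrm{Id}\cdot S$ can lie in distinct $T$-orbits that map to the same cusp, and the correspondence is only a surjection from $T$-orbits onto cusps). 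Your justification via hyperellipticity covers $\cH(2)$, which is all this paper uses, but the lemma as stated is for an arbitrary reduced origami, where $-\mathrm{Id}\in\veech(S)$ is not automatic; if you want to prove the statement in the generality in which it is phrased you should either add that hypothesis explicitly or pass to $\psltwoz$ throughout (noting that the paper itself treats $R$ as an involution, i.e.\ works projectively). This is a caveat about scope rather than an error in the argument.
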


\subsection{Multiplicity of a rational direction}
\label{Sec:MultiplicityRationalDirections}

Fix a primitive square-tiled surface $S$ and denote $\rho:S\to\TT^2$ the ramified covering onto the standard torus. If $\gamma:[0,1]\to S$ is a saddle
connection for $S$, we define its \emph{multiplicity} $m(\gamma)$ as the degree of
the map $t\mapsto \rho\circ\gamma(t)$. The \emph{multiplicity of a rational direction with co-slope} $p/q$ \emph{over the surface} $S$ is the minimal
multiplicity among all saddle connections on $S$ with the same co-slope $p/q$, that is the number $m(p/q;S)$ defined by
$$
m(p/q;S):=
\min
\{m(\gamma); \quad \gamma \  \textrm{saddle connection with }\ \hol(\gamma)\wedge(p+iq )=0\}.
$$

The multiplicity is \emph{covariant} under the left action of $\sltwoz$, that is
\begin{equation}\label{eqcovariancemultiplicity}
m(p/q;S)=m(A\cdot p/q;A\cdot S).
\end{equation}

\begin{remark}
If the co-slopes $p/q$ and $p'/q'$ are in the same cusp then there exists some $A\in\veech(S)$ such that
$
p'/q'=A\cdot(p/q)
$.
Thus Equation \eqref{eqcovariancemultiplicity} implies
$$
m(p/q;S)=m(p'/q';S).
$$
\end{remark}

Fix $n\in\NN$ and consider positive integers $a_1,\dots,a_n$. Define the element $g(a_1,\dots,a_n)$ of $\sltwoz$ by
\begin{eqnarray*}
&&
g(a_1,\dots,a_n):=
(T^{-a_n}R)\dots(T^{-a_2}R)(T^{a_{1}}R)
\textrm{ if }
n
\textrm{ is even }\\
&&
g(a_1,\dots,a_n):=
(T^{a_n}R)\dots (T^{-a_2}R)(T^{a_{1}}R)
\textrm{ if }
n
\textrm{ is odd }.
\\
\end{eqnarray*}

\begin{lemma}
\label{lemMultiplicityAlongPaths}
For any finite sequence $a_1,\dots,a_n$ we have
$$
m\big([a_1,\dots,a_n];S\big)=
m\big(\infty;
R\cdot g(a_1,\dots,a_n)\cdot S
\big).
$$
\end{lemma}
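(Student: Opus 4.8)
The plan is to prove the identity by induction on $n$, tracking how the continued-fraction slope $[a_1,\dots,a_n]$ is built up as a product of the generators $T$ and $R$, and invoking the covariance of the multiplicity (Equation~\eqref{eqcovariancemultiplicity}) at each step. The key observation is the algebraic relation between the element $g(a_1,\dots,a_n)$ and the homography sending $0$ (or $\infty$) to the rational $[a_1,\dots,a_n]$. First I would use Lemma~\ref{lemslopesandpathsinSL(2,Z)} (in its Farey form, so that $a_0=0$ and the last partial quotient is unconstrained), which expresses $[a_1,\dots,a_n]$ as $V^{a_1}\circ T^{a_2}\circ\cdots\cdot 0$ or $\infty$ depending on the parity of $n$. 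Rewriting $V=R\circ T^{-1}\circ R^{-1}$ and simplifying the telescoping product $R^{-1}R$ that appears between consecutive factors, one checks that
$$
[a_1,\dots,a_n]=\big(R^{-1}\cdot g(a_1,\dots,a_n)^{-1}\cdot R\big)\cdot(\ast),
$$
where $(\ast)$ is $0$ or $\infty$; equivalently $g(a_1,\dots,a_n)\cdot R\cdot [a_1,\dots,a_n]$ equals $R\cdot(\ast)$, which is $\infty$ or $0$. The sign/parity bookkeeping here is exactly what the two cases in the definition of $g(a_1,\dots,a_n)$ are designed to absorb.

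Once that algebraic identity is in hand, the proof is immediate: apply Equation~\eqref{eqcovariancemultiplicity} with $A=g(a_1,\dots,a_n)\cdot R$ (or $g(a_1,\dots,a_n)$ together with a preliminary application of covariance to the $R$ factor), so that
$$
m\big([a_1,\dots,a_n];S\big)=m\big(A\cdot[a_1,\dots,a_n];\,A\cdot S\big)=m\big(\infty;\,R\cdot g(a_1,\dots,a_n)\cdot S\big).
$$
Here I should be a little careful that the multiplicity is genuinely covariant for the \emph{left} $\sltwoz$-action on the co-slope and on the surface simultaneously, which is exactly the content of \eqref{eqcovariancemultiplicity}; no further geometric input about saddle connections is needed because that lemma has already packaged it.

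The main obstacle I anticipate is purely bookkeeping: getting the parity conventions to line up so that the product of blocks $(T^{\pm a_i}R)$ in $g(a_1,\dots,a_n)$ really is the inverse (up to conjugation by $R$) of the homography produced by Lemma~\ref{lemslopesandpathsinSL(2,Z)}, including the alternating signs on the exponents and the fact that the Farey expansion leaves the final partial quotient free. A clean way to handle this is to do the induction step by passing from $n$ to $n+1$: one shows that $g(a_1,\dots,a_{n+1})=T^{\pm a_{n+1}}R\cdot g(a_1,\dots,a_n)$ and that appending $a_{n+1}$ to the continued fraction corresponds to precomposing the homography with the appropriate elementary matrix, so that the two sides of the claimed identity transform compatibly. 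This reduces the whole argument to checking the base case $n=1$, where $[a_1]=1/a_1=T^{a_1}R\cdot\infty$ up to an $R$, and the inductive compatibility, both of which are short matrix computations.
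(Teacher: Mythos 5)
Your strategy is exactly the paper's: express the rational $[a_1,\dots,a_n]$ as the image of $0$ or $\infty$ under a word in $T,R$ obtained from Lemma~\ref{lemslopesandpathsinSL(2,Z)} (via $V=RT^{-1}R^{-1}$), identify that word with the inverse of $g(a_1,\dots,a_n)$ up to factors of $R$, and conclude with one application of the covariance~\eqref{eqcovariancemultiplicity}. The paper does this in three lines by quoting the identity $[a_1,\dots,a_n]=g(a_1,\dots,a_n)^{-1}\cdot R\cdot\infty$ and then applying covariance with $A=R\cdot g(a_1,\dots,a_n)$, using that $R=R^{-1}$ projectively.

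However, the specific identity you write down is off by a factor of $R$, and the slip propagates into your covariance step. Test $n=1$: with $g(a_1)=T^{a_1}R$ one has $g(a_1)^{-1}\cdot R\cdot\infty=R^{-1}T^{-a_1}\cdot 0=R^{-1}\cdot(-a_1)=1/a_1=[a_1]$, which is the correct identity; whereas your $R^{-1}\cdot g(a_1)^{-1}\cdot R\cdot(\ast)=T^{-a_1}R\cdot(\ast)$ equals $-a_1$ or $\infty$ according to whether $(\ast)$ is $\infty$ or $0$ --- never $[a_1]$. Relatedly, in your displayed computation you take $A=g\cdot R$, but then $A\cdot S=gR\cdot S$, which is \emph{not} $R\cdot g\cdot S$ (these matrices do not commute); the matrix you need is $A=R\cdot g$, for which $A\cdot[a_1,\dots,a_n]=Rg\cdot g^{-1}R\cdot\infty=R^2\cdot\infty=\infty$ and $A\cdot S=R\cdot g\cdot S$ as required. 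You anticipated this as ``bookkeeping,'' and your inductive scaffold will indeed produce the correct identity once the base case is fixed, so the argument is repairable --- but as written the key algebraic identity is false and the final chain of equalities does not go through.
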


\begin{proof}
Recall that projectively we have $R^2=\textrm{Id}$, that is $R=R^{-1}$. Observe that $\infty=R\cdot 0$. According to Lemma \ref{lemslopesandpathsinSL(2,Z)} any rational number $[a_1,\dots,a_n]$ in $(0,1)$ can be written as
$$
[a_1,\dots,a_n]=
g(a_1,\dots,a_n)^{-1}\cdot R\cdot\infty.
$$
Write for simplicity $g:=g(a_1,\dots,a_n)$. The Lemma follows from the covariance of multiplicity stated by Equation \eqref{eqcovariancemultiplicity}, indeed we have
$$
m([a_1,\dots,a_n];S)=
m(g^{-1}\cdot R\cdot\infty;S)=
m(g^{-1}\cdot R\cdot\infty;g^{-1}\cdot R^2\cdot g \cdot S)=
m(\infty;R\cdot g\cdot S).
$$
\end{proof}

\subsection{Selection of relevant rational approximations}

Denote $\alpha=[a_1,a_2,\dots]$ the continued fraction expansion of $\alpha\in(0,1)$ and recall that for any $n$ and any $i$ with $1\leq i<a_n$ we set
$$
p_n/q_n:=[a_1,a_2,\dots,a_n]
\textrm{ and }
\frac{p_{n,i}}{q_{n,i}}:=
[a_1,a_2,\dots,a_{n-1},i]
$$

The following is a very classical result, nevertheless we provide a proof for completeness.

\begin{lemma}\label{lem1:renormalizedformula}
Fix $\alpha=[a_1,a_2,\dots]$. For any $n$ we have
$$
\frac{1}{q_n\cdot|q_n\alpha-p_n|}=
[a_{n},\dots,a_1]+a_{n+1}+[a_{n+2},a_{n+3},\dots]
$$
For any $n$ and any $i$ with $1\leq i< a_n$ we have
$$
\frac{1}{q_{n,i}\cdot|q_{n,i}\alpha-p_{n,i}|}=
[i,\dots,a_1]+[a_{n}-i,a_{n+1},\dots].
$$
\end{lemma}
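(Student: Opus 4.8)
The plan is to prove both identities directly from the standard recursion for convergents, together with the elementary fact that the continued fraction algorithm can be "reversed" to express the denominators $q_n$ as themselves a continued fraction. First I would recall the matrix form of the convergent recursion: writing $M_n := \begin{pmatrix} p_n & p_{n-1} \\ q_n & q_{n-1} \end{pmatrix}$, one has $M_n = M_{n-1}\begin{pmatrix} a_n & 1 \\ 1 & 0\end{pmatrix}$, hence $q_n = a_n q_{n-1} + q_{n-2}$, and consequently $q_{n-1}/q_n = [a_n, a_{n-1}, \dots, a_1]$ by unwinding this recursion (the reversal of the continued fraction). The key algebraic inputs are then: (i) the well-known identity $\alpha = \dfrac{p_n + p_{n-1}\beta_{n+1}}{q_n + q_{n-1}\beta_{n+1}}$ where $\beta_{n+1} := [a_{n+1}, a_{n+2}, \dots]$ is the tail, and (ii) the determinant relation $p_{n-1}q_n - p_n q_{n-1} = (-1)^n$.

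For the first identity, I would solve (i) for the quantity $q_n\alpha - p_n$. A short computation gives
$$
q_n\alpha - p_n = \frac{(p_{n-1}q_n - p_n q_{n-1})\beta_{n+1}}{q_n + q_{n-1}\beta_{n+1}} = \frac{(-1)^n \beta_{n+1}}{q_n + q_{n-1}\beta_{n+1}},
$$
so that
$$
\frac{1}{q_n|q_n\alpha - p_n|} = \frac{q_n + q_{n-1}\beta_{n+1}}{q_n \beta_{n+1}} = \frac{1}{\beta_{n+1}} + \frac{q_{n-1}}{q_n}.
$$
Now $1/\beta_{n+1} = a_{n+1} + [a_{n+2}, a_{n+3}, \dots]$ directly from the definition of the tail, and $q_{n-1}/q_n = [a_n, a_{n-1}, \dots, a_1]$ from the reversal observation above. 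Adding these gives exactly the claimed formula. I would be slightly careful about the edge cases $n \le 1$ (where $q_{-1}=0$ or the reversed word is empty, interpreted as $0$), but these are immediate from the conventions $(p_{-1},q_{-1})=(1,0)$, $(p_0,q_0)=(0,1)$ (or $(a_0,1)$ in the $\alpha \in (0,1)$ normalization, with $a_0 = 0$).

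For the second identity about the intermediate Farey approximations $p_{n,i}/q_{n,i} = [a_1, \dots, a_{n-1}, i]$, the point is that this is literally the $n$-th convergent of the irrational number $\alpha' := [a_1, \dots, a_{n-1}, i, a_n - i, a_{n+1}, a_{n+2}, \dots]$; one checks that the continued fraction with this altered entry produces the same real number $\alpha$ is \emph{not} required — rather $\alpha'$ is a genuine, possibly different, irrational whose $n$-th convergent is $p_{n,i}/q_{n,i}$ and whose tail from index $n+1$ onward is $\beta'_{n+1} = [a_n - i, a_{n+1}, \dots]$. (This uses $1 \le i < a_n$ so that $a_n - i \ge 1$ is a legitimate entry.) Applying the already-proved first-type formula to $\alpha'$ at level $n$ yields
$$
\frac{1}{q_{n,i}|q_{n,i}\alpha' - p_{n,i}|} = [i, a_{n-1}, \dots, a_1] + [a_n - i, a_{n+1}, a_{n+2}, \dots],
$$
and since $\alpha' = \alpha$ whenever we only care that the convergent and the tail match — more precisely, since $q_{n,i}\alpha' - p_{n,i}$ is determined by the convergent data and the tail $\beta'_{n+1}$, which do not involve the true value of $\alpha$ — wait: this needs the genuine relation $q_{n,i}\alpha - p_{n,i} = (-1)^n\beta'_{n+1}/(q_{n,i} + q_{n-1,i}\beta'_{n+1})$ to hold for the \emph{actual} $\alpha$. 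The cleanest way to see this, and the step I expect to be the main obstacle, is to verify directly that $\alpha = \dfrac{p_{n,i} + p_{n-1}\,\beta'_{n+1}}{q_{n,i} + q_{n-1}\,\beta'_{n+1}}$ where $\beta'_{n+1} = [a_n - i, a_{n+1}, \dots]$: this follows because $\alpha = [a_1,\dots,a_{n-1}, a_n, a_{n+1},\dots]$ and $a_n + [a_{n+1},\dots] = i + (a_n - i) + [a_{n+1},\dots] = i + 1/\beta'_{n+1}$, so replacing the tail $[a_n,a_{n+1},\dots]$ by "$i$ then the number $1/\beta'_{n+1}$" leaves $\alpha$ unchanged, while the truncation at the entry $i$ gives precisely the convergent $p_{n,i}/q_{n,i}$ with predecessor $p_{n-1}/q_{n-1}$. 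Once this is in hand, the same two-line computation as before — using $p_{n-1}q_{n,i} - p_{n,i}q_{n-1} = (-1)^n$ (which holds since $M_{n-1}$ is unchanged and we multiply by $\begin{pmatrix} i & 1 \\ 1 & 0\end{pmatrix}$) — gives $\dfrac{1}{q_{n,i}|q_{n,i}\alpha - p_{n,i}|} = \dfrac{q_{n,i}}{q_{n,i}}\cdot\dfrac{1}{\beta'_{n+1}} \cdot \dfrac{?}{?}$, i.e. $\dfrac{1}{\beta'_{n+1}} + \dfrac{q_{n-1}}{q_{n,i}} = [a_n - i, a_{n+1}, \dots] + [i, a_{n-1}, \dots, a_1]$, where the last equality for the ratio $q_{n-1}/q_{n,i}$ again comes from reversing the recursion $q_{n,i} = i\,q_{n-1} + q_{n-2}$. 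This completes the proof; the only genuinely delicate point is bookkeeping the "split the last entry" manipulation and confirming it does not alter $\alpha$ while producing the right convergent and predecessor.
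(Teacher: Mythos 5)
Your proof of the first identity is correct and is essentially the paper's argument in a different dress: both decompose $\tfrac{1}{q_n|q_n\alpha-p_n|}$ as $\tfrac{q_{n-1}}{q_n}$ plus the ratio $\tfrac{|q_{n-1}\alpha-p_{n-1}|}{|q_n\alpha-p_n|}$; you evaluate the second term via the M\"obius expression of $\alpha$ in terms of the tail, the paper via the recursion $l_{n-1}=a_{n+1}l_n+l_{n+1}$ for $l_n=|q_n\alpha-p_n|$. These are equivalent.

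The second identity is where the proof as written fails, exactly at the step you flagged as the main obstacle. The claimed relation
$\alpha=\frac{p_{n,i}+p_{n-1}\beta'_{n+1}}{q_{n,i}+q_{n-1}\beta'_{n+1}}$ with $\beta'_{n+1}=[a_n-i,a_{n+1},\dots]$ is false. Setting $\zeta:=(a_n-i)+[a_{n+1},\dots]=1/\beta'_{n+1}$ and substituting $\alpha_n=i+\zeta$ into $\alpha=\frac{\alpha_np_{n-1}+p_{n-2}}{\alpha_nq_{n-1}+q_{n-2}}$, the correct relation is
$$
\alpha=\frac{p_{n,i}+\zeta\,p_{n-1}}{q_{n,i}+\zeta\,q_{n-1}}
=\frac{\beta'_{n+1}\,p_{n,i}+p_{n-1}}{\beta'_{n+1}\,q_{n,i}+q_{n-1}},
$$
i.e.\ the factor $\zeta$ sits on the \emph{previous} convergent, not on $p_{n,i}/q_{n,i}$. (Check: for $\alpha=[2,1,1,\dots]\approx 0.382$, $n=i=1$, one has $p_{1,1}/q_{1,1}=1/1$, $p_0/q_0=0/1$, $\beta'=[1,1,\dots]\approx 0.618$; your formula gives $1/(1+\beta')\approx 0.618\neq\alpha$, while the corrected one gives $\beta'/(\beta'+1)\approx 0.382=\alpha$.) The underlying reason is that the ``split'' word $[a_1,\dots,a_{n-1},i,a_n-i,a_{n+1},\dots]$ is \emph{not} a continued fraction expansion of $\alpha$ — splitting an entry changes the value — so the Gauss-case formula cannot be transplanted verbatim. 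Your concluding line then makes a compensating error by reading $1/\beta'_{n+1}$ as $[a_n-i,a_{n+1},\dots]$, which is $\beta'_{n+1}$ itself and not its reciprocal; the two reciprocal mix-ups cancel, so the displayed conclusion is right, but the intermediate identities are both false. The repair is immediate: from the corrected relation,
$$
q_{n,i}\alpha-p_{n,i}=\frac{\pm1}{\beta'_{n+1}q_{n,i}+q_{n-1}},
\qquad\text{hence}\qquad
\frac{1}{q_{n,i}|q_{n,i}\alpha-p_{n,i}|}=\beta'_{n+1}+\frac{q_{n-1}}{q_{n,i}},
$$
which is $[a_n-i,a_{n+1},\dots]+[i,a_{n-1},\dots,a_1]$ as desired. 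Note the structural asymmetry with the Gauss case: there the tail contributes $1/\beta_{n+1}>1$, here it contributes $\beta'_{n+1}<1$, consistent with $q_{n,i}|q_{n,i}\alpha-p_{n,i}|$ being of order $1$ for intermediate fractions rather than small. With this fix your argument is valid and parallel to the paper's, which obtains the same conclusion from the relation $l_{n,i}=l_n+(a_n-i)l_{n-1}$.
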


\emph{Note:} For any $n$ and any $i$ with $1\leq i< a_n$ we have
$
[i,\dots,a_1]+[a_{n}-i,a_{n+1},\dots]<2
$,
which corresponds to the well-known fact that $q|q\alpha-p|<1/2$ just for $p/q=p_n/q_n$.

\medskip

\begin{proof}
Suppose that $(p,q)$ and $(p',q')$ form a basis of $\ZZ^2$, so that $qp'-pq'=\pm1$, and assume also that $(\alpha,1)$ belongs to the convex cone spanned by these two vectors, that is 
$
(q\alpha-p)(q'\alpha-p')<0
$. 
Then we have
$$
\frac{1}{q|q\alpha-p|}=
\left|\frac{qp'-pq'+qq'\alpha-qq'\alpha}{q(q\alpha-p)}\right|=
\frac{q'}{q}+\frac{p'-q'\alpha}{q\alpha-p}.
$$
In order to prove both the two parts of the statement we set $q':=q_{n-1}$ and $p':=p_{n-1}$. Observe that for any $n$ and any $i$ with $1\leq i\leq a_n$, thus both for Farey and Gauss approximations, we have
$$
\frac{q_{n-1}}{q_{n,i}}=[i,a_{n-1},\dots,a_1].
$$
To simplify the notation, for any $n$ and any $i$ set
$
l_{n,i}:=|q_{n,i}\alpha-p_{n,i}|
$.
Set also $l_n:=|q_{n}\alpha-p_{n}|$. The first part of the statement follows observing that
$
l_{n-1}=a_{n+1}\cdot l_n+l_{n+1}
$,
so that
$$
\frac{l_{n-1}}{l_n}=a_{n+1}+[a_{n+2},a_{n+2},\dots].
$$
The second part of the statement follows observing that
$
l_{n,i}=l_n+(a_n-i)\cdot l_{n-1}
$,
so that
$$
\frac{l_{n-1}}{l_{n,i}}=
\cfrac{1}{a_n-i+\cfrac{l_n}{l_{n-1}}}=
[a_{n}-i,a_{n+1},\dots].
$$
\end{proof}

\begin{lemma}\label{lem2:renormalizedformula}
Let $\alpha$ be an irrational slope. For any $\epsilon>0$ there exists $Q>0$ such that for any rational $p/q$ with $q>Q$ and which is neither a Gauss approximation of $\alpha$ nor a Farey approximation we have
$$
q\cdot|q\alpha-p|\geq
1+2\bigg(\frac{1}{L(\TT^2,\alpha)}-\epsilon\bigg).
$$
\end{lemma}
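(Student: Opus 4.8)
The plan is to show that any rational $p/q$ which is neither a Gauss nor a Farey approximation of $\alpha$ satisfies $q|q\alpha-p|\geq 1$ with an explicit quantitative improvement coming from the tail of the continued fraction. First I would invoke the standard fact (used already in the proof of Lemma~\ref{lem1:renormalizedformula}) that if $(p,q)$ and $(p',q')$ form a basis of $\ZZ^2$ with $(\alpha,1)$ in the convex cone spanned by these vectors, then
$$
\frac{1}{q|q\alpha-p|}=\frac{q'}{q}+\frac{|q'\alpha-p'|}{|q\alpha-p|}.
$$
Given $p/q$ not among the approximants, there is a unique $n$ such that $p/q$ lies strictly between the consecutive Farey approximants $p_{n-1}/q_{n-1}$ and $p_n/q_n$ (equivalently, $q$ is wedged between the corresponding Farey denominators), and one can write $p/q = p_{n-1}/q_{n-1} + t\cdot p_n/q_n$-type mediant relations; the key point is that, since $p/q$ is not itself a Farey (intermediate) convergent, the "other" basis vector one pairs with is $p_n/q_n$ (not $p_{n-1}/q_{n-1}$), so $q > q_n$ strictly and in fact $q\geq q_{n-1}+2q_n$ or similar.

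The second step is to estimate the two terms. Pairing $(p,q)$ with $(p',q')=(p_n,q_n)$, we get $q|q\alpha-p|^{-1}= q_n/q + |q_n\alpha-p_n|/|q\alpha-p|$. Since $|q\alpha-p| > |q_n\alpha - p_n|$ (the convergent beats any non-convergent with comparable or larger denominator) and $q > q_n$, both summands are less than $1$, giving $q|q\alpha-p|>\ldots$ — wait, I want a lower bound on $q|q\alpha-p|$, i.e. an upper bound on its reciprocal. So instead I would pair $(p,q)$ with the convergent $p_{n-1}/q_{n-1}$ on the appropriate side and write $\frac{1}{q|q\alpha-p|} = \frac{q_{n-1}}{q} + \frac{|q_{n-1}\alpha - p_{n-1}|}{|q\alpha-p|}$, then bound $q\geq q_{n-1}+q_n$ and $|q\alpha-p|\geq |q_{n-1}\alpha-p_{n-1}| - $ (something), carefully. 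The cleanest route: express the reciprocal using the continued-fraction tail as in Lemma~\ref{lem1:renormalizedformula}. If $p/q$ sits between $p_{n-1}/q_{n-1}$ and $p_{n,1}/q_{n,1}$ (the first intermediate Farey fraction) then the worst case is $p/q=p_{n-1}/q_{n-1}$ shifted, and a direct computation analogous to Lemma~\ref{lem1:renormalizedformula} gives
$$
\frac{1}{q|q\alpha-p|}\leq \frac{1}{q_{n-1}|q_{n-1}\alpha-p_{n-1}|}\leq [a_{n-1},\dots,a_1]+a_n+[a_{n+1},\dots],
$$
but this is not quite $1+2/L$; rather one must use that a non-approximant has $q|q\alpha-p|\geq 1$ and then extract the correction from the fact that the relevant tail $[a_{n},a_{n+1},\dots]$ (or its reverse) is controlled by $L(\TT^2,\alpha)$ via $L(\TT^2,\alpha)=\limsup_n\big([a_n,\dots,a_1]+a_{n+1}+[a_{n+2},\dots]\big)$.

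Concretely, I expect the argument to run: write $p/q = p_{n-1}/q_{n-1} + \ell \cdot p_n/q_n$ with $\ell\geq 1$ an integer (this is exactly the case "$i$-th intermediate with $i\geq a_n$", i.e. $p/q$ is a convergent or lies beyond, forcing us into the next level), or more precisely handle the generic position $p/q$ between two consecutive (intermediate or principal) convergents of levels differing by one, and compute $q|q\alpha-p|$ as a sum of two reciprocal-tail quantities each $\geq$ a fraction of the relevant large partial quotient or tail. Summing, $q|q\alpha-p|\geq 1 + 2/\big(\text{relevant tail}\big)$, and since along the subsequence realizing $L(\TT^2,\alpha)$ the tails converge, for $q$ large (i.e. $n$ large) the relevant tail is $\leq L(\TT^2,\alpha)+\epsilon'$, yielding $q|q\alpha-p|\geq 1+2(1/L(\TT^2,\alpha)-\epsilon)$. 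The main obstacle I anticipate is the bookkeeping of which two basis vectors to pair $(p,q)$ with and getting the exact constant $1+2(\cdots)$ rather than a weaker inequality: one has to identify precisely the "extremal" non-approximants (those closest to $\alpha$ for their denominator size, which are the $p_{n,i}/q_{n,i}$ with $i=a_n$, i.e. the convergents themselves seen from the wrong level, and their immediate neighbors) and check the continued-fraction identity in that case; everything else is strictly better. So the proof is: (i) recall the cone/basis identity; (ii) classify non-approximant $p/q$ by the consecutive Farey fractions bracketing it and show $q\geq q_{n-1}+2q_n$ type bounds hold, with equality governing the extremal case; (iii) in the extremal case, run the Lemma~\ref{lem1:renormalizedformula}-style computation to get $q|q\alpha-p| = 1 + $ (two tail terms)$\,\geq 1 + 2[a_{n+1},a_{n+2},\dots]$ or with reversed tail; (iv) use the $\limsup$ characterization of $L(\TT^2,\alpha)$ to absorb the tail into $1/L(\TT^2,\alpha)-\epsilon$ for all $q>Q$.
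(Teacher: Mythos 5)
Your proposal correctly identifies the shape of the target inequality ($q|q\alpha-p|\geq 1+$ two approximation-quality terms) and the correct final step (each extra term is at least $\liminf_{q}q|q\alpha-p|-\epsilon=1/L(\TT^2,\alpha)-\epsilon$), but the central inequality is never actually established, and the tools you propose for it do not work. The identity $\frac{1}{q|q\alpha-p|}=\frac{q'}{q}+\frac{|q'\alpha-p'|}{|q\alpha-p|}$ requires $qp'-pq'=\pm1$; when $p/q$ is neither a Gauss nor a Farey approximation, $(p,q)$ does not form a unimodular basis with a convergent, so both of your attempted pairings (with $(p_n,q_n)$ and with $(p_{n-1},q_{n-1})$) are invalid. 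Likewise, the assertion that ``a non-approximant has $q|q\alpha-p|\geq1$'' is itself the nontrivial core of the lemma, not something you can quote and then improve. Finally, the representation $(p,q)=(p_{n-1},q_{n-1})+\ell\,(p_n,q_n)$ parametrizes exactly the intermediate Farey fractions, which are excluded by hypothesis, so the ``extremal case'' you propose to compute is empty.

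The missing idea is the following. Bracket $p/q$ strictly between two consecutive Farey approximations, $p_{n,i}/q_{n,i}<p/q<p_{n,i+1}/q_{n,i+1}<\alpha$, and expand $(p,q)=x(p_{n-1},q_{n-1})+y(p_{n,i},q_{n,i})$ in the unimodular basis $\{(p_{n-1},q_{n-1}),(p_{n,i},q_{n,i})\}$; strict bracketing forces $x\geq1$ and $y\geq2$, since $y=1$ would make $p/q$ itself a Farey approximation. The paper then decomposes the triangle with vertices $(0,0)$, $(p,q)$, $(q\alpha,q)$, whose area (in the normalization used there) is $q|q\alpha-p|$, into three interior-disjoint triangles of areas $|qp_{n,i+1}-q_{n,i+1}p|\geq1$ (a nonzero integer), $q_{n,i+1}|q_{n,i+1}\alpha-p_{n,i+1}|$, and $q_{n,i}|q_{n,i}\alpha-p_{n,i}|$. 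This yields
$$
q|q\alpha-p|\;\geq\;1+q_{n,i}|q_{n,i}\alpha-p_{n,i}|+q_{n,i+1}|q_{n,i+1}\alpha-p_{n,i+1}|,
$$
and the conclusion follows from $\liminf_{q\to\infty} q|q\alpha-p|=1/L(\TT^2,\alpha)$ (note this is the $\liminf$ over all approximations, not a statement about the subsequence realizing the $\limsup$). Without this decomposition, or an equivalent algebraic computation in the basis above in the spirit of Lemma \ref{lem1:renormalizedformula}, your sketch does not close.
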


\begin{proof}
We assume $p/q<\alpha$, the other case being the same, then consider $n$ corresponding to those Gauss approximations such that 
$
p_n/q_n<\alpha<p_{n-1}/q_{n-1}
$. 
Assume also $a_n\geq2$, the lattice argument for the case $a_n=1$ being the same. According to the assumption in the statement, let $p_{n,i}/q_{n,i}$ and $p_{n,i+1}/q_{n,i+1}$ be two consecutive Farey approximations of $\alpha$ such that we have the strict inequality
$$
p_{n,i}/q_{n,i}<p/q<p_{n,i+1}/q_{n,i+1}<\alpha.
$$

\begin{figure}[h]
\centering
\includegraphics[width=0.6\textwidth]{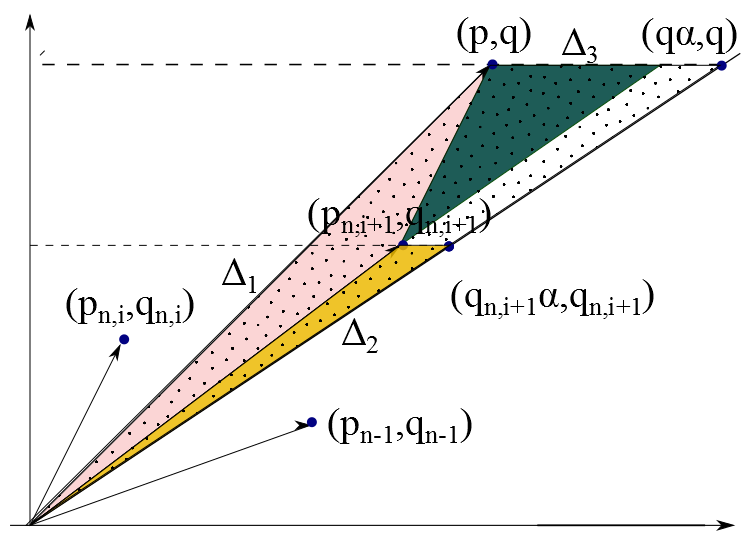}
\caption{Illustration for the proof of Lemma \ref{lem2:renormalizedformula}.}
\label{triangles}
\end{figure}

We have
$
(p_{n,i+1},q_{n,i+1})=
(p_{n,i},q_{n,i})+(p_{n-1},q_{n-1})
$.
Moreover the integer vectors $(p_{n-1},q_{n-1})$ and $(p_{n,i},q_{n,i})$ form a basis of $\ZZ^2$, therefore we have a decomposition
$$
(p,q)=
x(p_{n-1},q_{n-1})+
y(p_{n,i},q_{n,i})
\textrm{ with }
x\geq1
\textrm{ and }
y\geq2.
$$
Condition $y\geq2$ holds because the linear combinations with $y=1$ correspond to Farey's approximations. One can see that the triangle with vertices 
$(p,q)$, $(q\alpha,q)$ and $(0,0)$ (which is dotted in Figure \ref{triangles}) contains the union of the triangles $\Delta_1$, $\Delta_2$ and $\Delta_3$ (see Figure \ref{triangles}), which are disjoint in the interior, where $\Delta_1$ is the triangle with vertices $(p,q)$, $(p_{n,i+1},q_{n,i+1})$ and $(0,0)$, $\Delta_2$ is the triangle with vertices $(p_{n,i+1},q_{n,i+1})$, $(q_{n,i+1}\alpha,q_{n,i+1})$ and $(0,0)$ and $\Delta_3$ is the triangle obtained translating by the vector $(p_{n,i+1},q_{n,i+1})$ the triangle with vertices $(p_{n,i},q_{n,i})$, $(q_{n,i}\alpha,q_{n,i})$ and $(0,0)$. One can see that the area of the big triangle (dotted in Figure) is $q|q\alpha-p|$  and by  computing the areas $A(\Delta_i)$ of the triangles one has that $A(\Delta_1)=|qp_{n,i+1}-q_{n,i+1}p|$, $A(\Delta_2)=|
q_{n,i+1}|q_{n,i+1}\alpha-p_{n,i+1}|$ and $A(\Delta_3)=|
q_{n,i}|q_{n,i}\alpha-p_{n,i}|$. 
Therefore we have
$$
q|q\alpha-p|\geq
|qp_{n,i+1}-q_{n,i+1}p|+
q_{n,i}|q_{n,i}\alpha-p_{n,i}|+
q_{n,i+1}|q_{n,i+1}\alpha-p_{n,i+1}|.
$$
Fix $\epsilon>0$ and set $a:=(L(\TT^2,\alpha))^{-1}$. If $n$ is big enough then we have both
$
q_{n,i}|q_{n,i}\alpha-p_{n,i}|>a-\epsilon
$
and
$
q_{n,i+1}|q_{i+1}\alpha-p_{i+1}|>a-\epsilon
$, 
hence 
$
q|q\alpha-p|>1+2(a-\epsilon)
$.
\end{proof}

\subsection{The formula with continued fraction}

Fix a primitive origami $S$ and let $N_S$ be the number of squares of $S$ and
$$
M_S:=\max_{p/q\in\QQ}
m(p/q;S).
$$
Consider irrational slopes $\alpha=[a_1,a_2,\dots]$ in $(0,1)$. Recall that the function $L(\TT^2,\alpha)$ is invariant under the homographic action of $\sltwoz$. More generally, the function $\alpha\mapsto L(S,\alpha)$ is invariant under the Veech group $\veech(S)$ of $S$. Since the latter acts expansively and transitively on $\RR\cup\{\infty\}$, then in order to compute $\cL(S)$ it is enough to consider $\alpha\in(0,1)$, that is
$$
\cL(S)=\{L(S,\alpha);0<\alpha<1\}.
$$
Recall Lemma 5.10 in \cite{hubertmarcheseulcigrai}.

\begin{lemma}
\label{Lem:Lem5.10HubertMarcheseUlcigrai}
Consider a primitive origami $S$. We have
$$
L(S,\alpha):=
N_S\cdot
\limsup_{q,p\to\infty}
\frac{1}{m^2(p/q,S)\cdot q\cdot|q\alpha-p|}.
$$
\end{lemma}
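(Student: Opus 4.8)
The plan is to combine the three lemmas just established --- Lemma \ref{lem1:renormalizedformula}, Lemma \ref{lem2:renormalizedformula}, and the definition of the multiplicity $m(p/q;S)$ --- with the formula \eqref{defLvalues} for $L(S)$ in terms of periods. Since $L(S,\alpha) = L(r_{\arctan\alpha}\cdot S)$, by \eqref{defLvalues} the value $L(S,\alpha)$ equals $\limsup$ of $2/|\re(v)|\cdot|\im(v)|$ over periods $v\in\hol(S)$ of the rotated surface, equivalently the $\limsup$ of $2/(\area$ of the associated parallelogram$)$ over saddle connections of $S$ whose direction has slope $\alpha$. The key geometric observation is that a saddle connection $\gamma$ of $S$ projects under $\rho:S\to\TT^2$ to a closed loop on $\TT^2$ wrapping around with some integer vector $(p,q)$ repeated $m(\gamma)$ times; hence its holonomy is $m(\gamma)\cdot(\text{holonomy of the primitive segment with co-slope }p/q)$, so the quantity $|\re(v)|\cdot|\im(v)|$ for such a saddle connection equals $m(\gamma)^2 \cdot q\cdot|q\alpha - p|$ up to the normalization factor coming from $\area(S)=1$ (the surface $S$ has $N_S$ squares, each of area $1$ before renormalization, so the normalizing constant is $N_S$). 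Taking the minimum over saddle connections in a fixed rational direction $p/q$ replaces $m(\gamma)$ by $m(p/q;S)$, which is why the multiplicity enters as $m^2(p/q;S)$.

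Concretely, I would first set up the dictionary: for a saddle connection $\gamma$ on $S$ in direction of slope $\alpha'$ close to $\alpha$, write $\rho\circ\gamma$ as a loop of homology class $m(\gamma)(q,p)\in\ZZ^2$ for some coprime $(p,q)$; then $\int_\gamma w = m(\gamma)\int_{\delta_{p/q}} dz$ where $\delta_{p/q}$ is the primitive segment, and after rescaling so that $\area(S)=1$ one gets $|\re(v)\im(v)| = \frac{1}{N_S} m(\gamma)^2 q|q\alpha - p|$ in the limit as the saddle connection becomes long (so that the direction converges to $\alpha$ and the error in replacing $\alpha'$ by $\alpha$ is negligible in the $\limsup$). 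Substituting into \eqref{defLvalues} gives
$$
L(S,\alpha) = \limsup_{q,p\to\infty} \frac{2}{\tfrac{1}{N_S} m^2(p/q,S)\cdot q\cdot |q\alpha-p|}\cdot \frac12 = N_S\cdot \limsup_{q,p\to\infty}\frac{1}{m^2(p/q,S)\cdot q\cdot|q\alpha-p|},
$$
where the factor $2$ from \eqref{defLvalues} and the factor $\tfrac12$ from the area of a parallelogram versus a triangle cancel; I would track these constants carefully. One must also check that the $\limsup$ is genuinely realized along a sequence of $(p,q)$ with $q\to\infty$ (equivalently that short saddle connections, of which there are boundedly many in a compact part of the $g_t$-orbit, do not affect the $\limsup$); this is exactly the content of \eqref{defLvalues} reformulated via the systole.

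The main obstacle I anticipate is the bookkeeping needed to justify that the $\limsup$ over \emph{periods} of the rotated surface $r_{\arctan\alpha}\cdot S$ matches the $\limsup$ over \emph{all} rational co-slopes $p/q$ (not merely the Gauss or Farey approximations of $\alpha$): a priori a saddle connection in direction $\alpha'\approx\alpha$ could have co-slope $p/q$ that is a bad approximation of $\alpha$. This is precisely why Lemma \ref{lem2:renormalizedformula} is invoked --- it shows that for $p/q$ neither a Gauss nor Farey approximation the quantity $q|q\alpha-p|$ is bounded below by roughly $1 + 2/L(\TT^2,\alpha)$, which (together with $m\geq 1$) keeps the corresponding term away from the $\limsup$, so one may restrict attention to the approximation sequence and the correspondence between long saddle connections and large $q$ becomes transparent. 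Assembling these pieces --- the geometric identity for holonomies, the normalization constant $N_S$, and the reduction to approximation sequences via Lemma \ref{lem2:renormalizedformula} --- yields the stated formula; this is essentially the argument of \cite{hubertmarcheseulcigrai}, so I would cite Lemma 5.10 there and reproduce only the normalization computation.
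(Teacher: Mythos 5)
The paper does not prove this lemma at all: it is quoted verbatim from Lemma~5.10 of \cite{hubertmarcheseulcigrai} (hence the label), so there is no in-paper argument to compare against. Your reconstruction of the proof from the cited reference is essentially correct: the heart of the matter is exactly the dictionary you describe, namely that a saddle connection $\gamma$ over the primitive class $(p,q)$ has holonomy $m(\gamma)\cdot(p+iq)/\sqrt{N_S}$ after the area-one normalization, so that after rotating direction $\alpha$ to the vertical one gets $|\re(v)|\cdot|\im(v)|\to m(\gamma)^2\, q\,|q\alpha-p|/N_S$ as $q\to\infty$, and minimizing over saddle connections in a fixed rational direction replaces $m(\gamma)$ by $m(p/q;S)$. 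Two small corrections. First, your factors of $2$ and $\tfrac12$ are both spurious: the first expression in \eqref{defLvalues} is $\limsup 1/(|\re(v)|\cdot|\im(v)|)$ with no $2$ (the $2$ appears only in the systole reformulation), so the normalization computation closes without any cancellation; your final identity is right but for the wrong reason. Second, the appeal to Lemma~\ref{lem2:renormalizedformula} is not needed here: the right-hand side of the lemma is a $\limsup$ over \emph{all} pairs $(p,q)$, and every rational co-slope is realized by saddle connections of a primitive origami, so the correspondence between long periods and large $q$ is already a bijection at the level of directions and no approximation sequence has to be selected. The selection of Gauss and Farey approximations via Lemma~\ref{lem2:renormalizedformula} is the content of the \emph{next} step, Theorem~\ref{thm:renormalizedformula}, not of this lemma.
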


\medskip

For any positive integer $n$ and any integer $i$ with $1\leq i\leq a_n$ set
\begin{eqnarray*}
&&
D(n,i,\alpha):=
[a_{n},\dots,a_1]+a_{n+1}+[a_{n+2},a_{n+3},\dots]
\textrm{ if }
i=a_n\\
&&
D(n,i,\alpha):=
[i,\dots,a_1]+[a_{n}-i,a_{n+1},\dots]
\textrm{ if }
1\leq i<a_n.\\
\end{eqnarray*}

\begin{theorem}\label{thm:renormalizedformula}
Let $S$ be a reduced origami and let
$
\alpha=[a_1,a_2,\dots]\in(0,1)
$
be an irrational slope such that
$$
L(\TT^2,\alpha)>M_S^2-2.
$$
Then we have
$$
L(S,\alpha)=
N_S\cdot
\limsup_{n\to\infty}
\max_{1\leq i\leq a_n}
\frac{D(n,i,\alpha)}
{m^2\big(\infty;R\cdot g(a_1,\dots,a_{n-1},i)\cdot S\big)}.
$$
\end{theorem}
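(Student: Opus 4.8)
The plan is to reduce everything to Lemma~\ref{Lem:Lem5.10HubertMarcheseUlcigrai}, which already writes $L(S,\alpha)$ as $N_S$ times the $\limsup$ over \emph{all} rationals $p/q$ of $1/\big(m^2(p/q;S)\,q\,|q\alpha-p|\big)$, and then to show that, under the hypothesis $L(\TT^2,\alpha)>M_S^2-2$, this $\limsup$ is attained along the Gauss and Farey approximations of $\alpha$ only, along which, by Lemmas~\ref{lem1:renormalizedformula} and~\ref{lemMultiplicityAlongPaths}, the quantity has exactly the form in the statement. Concretely, I would split the rationals with large denominator into the set of those which are Gauss or Farey approximations of $\alpha$ and its complement, and let $\cA$, $\cB$ be the corresponding $\limsup$'s; since the two families together exhaust all rationals up to finitely many terms of bounded denominator, the $\limsup$ of Lemma~\ref{Lem:Lem5.10HubertMarcheseUlcigrai} equals $\max\{\cA,\cB\}$.

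Next I would compute $\cA$. The approximations are exactly the $p_{n,i}/q_{n,i}=[a_1,\dots,a_{n-1},i]$ for $n\ge1$ and $1\le i\le a_n$, the case $i=a_n$ being the $n$-th convergent $p_n/q_n$. Lemma~\ref{lem1:renormalizedformula} gives $1/\big(q_{n,i}\,|q_{n,i}\alpha-p_{n,i}|\big)=D(n,i,\alpha)$, and Lemma~\ref{lemMultiplicityAlongPaths} applied to the finite word $a_1,\dots,a_{n-1},i$ gives $m(p_{n,i}/q_{n,i};S)=m\big(\infty;R\cdot g(a_1,\dots,a_{n-1},i)\cdot S\big)$. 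Since for each fixed $n$ there are finitely many admissible $i$ and $q_{n,i}\to\infty$, the $\limsup$ over this family reorganizes as $\limsup_{n}\max_{1\le i\le a_n}$ of $D(n,i,\alpha)/m^2\big(\infty;R\cdot g(a_1,\dots,a_{n-1},i)\cdot S\big)$, which is precisely the right-hand side of the theorem divided by $N_S$; call it $\cA$. Thus it remains to prove $\cB\le\cA$, for then $\max\{\cA,\cB\}=\cA$ and the formula follows.

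The comparison $\cB\le\cA$ is the heart of the matter. For an upper bound on $\cB$: if $p/q$ is neither a Gauss nor a Farey approximation and $q$ is large, Lemma~\ref{lem2:renormalizedformula} gives $q\,|q\alpha-p|\ge 1+2\big(L(\TT^2,\alpha)^{-1}-\epsilon\big)$ for every $\epsilon>0$, so using $m(p/q;S)\ge1$ and letting $\epsilon\to0$ we get $\cB\le L(\TT^2,\alpha)/\big(L(\TT^2,\alpha)+2\big)$. For a lower bound on $\cA$: restricting to convergents and using $m(p_n/q_n;S)\le M_S$ together with the classical fact that $\limsup_n 1/\big(q_n\,|q_n\alpha-p_n|\big)=L(\TT^2,\alpha)$ (the $\limsup$ over all rationals is attained along convergents because $1/(q|q\alpha-p|)>2$ forces $p/q$ to be a convergent, while $L(\TT^2,\alpha)\ge\sqrt{5}>2$), we obtain $\cA\ge L(\TT^2,\alpha)/M_S^2$. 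Now the hypothesis $L(\TT^2,\alpha)>M_S^2-2$ is the same as $M_S^2<L(\TT^2,\alpha)+2$, hence $\cA\ge L(\TT^2,\alpha)/M_S^2>L(\TT^2,\alpha)/\big(L(\TT^2,\alpha)+2\big)\ge\cB$, which gives $\cB<\cA$ and finishes the proof (the degenerate case $L(\TT^2,\alpha)=+\infty$ is handled by the obvious conventions, both sides of the asserted identity then being $+\infty$).

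The step I expect to be the main obstacle is precisely this final comparison: it is where the hypothesis $L(\TT^2,\alpha)>M_S^2-2$ enters, and it relies on having the sharp lower bound $1+2\big(L(\TT^2,\alpha)^{-1}-\epsilon\big)$ for $q|q\alpha-p|$ at non-approximations (the lattice/area estimate of Lemma~\ref{lem2:renormalizedformula}) together with the observation that $M_S$ is the worst possible multiplicity loss. Everything else is bookkeeping, the only mildly technical point being the routine passage from a $\limsup$ over the family of all approximations ordered by denominator to the double expression $\limsup_n\max_i$.
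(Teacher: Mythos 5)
Your proposal is correct and follows essentially the same route as the paper: both use Lemma~\ref{lem2:renormalizedformula} to bound $q|q\alpha-p|$ from below at non-approximations, the bound $m\le M_S$ at convergents to show the extremal values occur along Gauss/Farey approximations (this is exactly where the hypothesis $L(\TT^2,\alpha)>M_S^2-2$ enters, via the inequality $1+2/L>M_S^2/L$), and then Lemmas~\ref{lem1:renormalizedformula} and~\ref{lemMultiplicityAlongPaths} to rewrite the surviving terms. The only difference is cosmetic: the paper phrases the comparison as a $\liminf$ of $m^2(p/q;S)\,q\,|q\alpha-p|$ rather than a $\limsup$ of its reciprocal.
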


\begin{proof}
Set
$
a:=L(\TT^2,\alpha)^{-1}
$
and observe that the assumption in the statement is equivalent to
$
(M_S^2-2)\cdot a<1
$.
Therefore consider $\epsilon>0$ such that
$$
1+2(a-\epsilon)>
M_S^2\cdot(a+\epsilon).
$$
For any rational $p/q$ with $q$ big enough and which is neither a Farey nor a Gauss approximation of $\alpha$ then Lemma \ref{lem2:renormalizedformula} implies
$$
q\cdot|q\alpha-p|>1+2(a-\epsilon).
$$
On the other hand there exists infinitely many $n$ such that
$
q_n\cdot|q_n\alpha-p_n|<a+\epsilon
$,
therefore we have
$$
\liminf_{n\to\infty}
m^2(p_n/q_n;S)\cdot
q_n\cdot|q_n\alpha-p_n|<
M_S^2(a+\epsilon).
$$
It follows that
$
\liminf_{q,p\to\infty}
m^2(p/q;S)\cdot q\cdot|q\alpha-p|
$
is taken either along the sequence of Farey approximations, or along the subsequence of Gauss approximation. The Theorem follows from the formulae in Lemma \ref{lem1:renormalizedformula}.
\end{proof}

\begin{remark}
Theorem \ref{thm:renormalizedformula} generalizes the nice classical formula
$$
L(\TT^2,\alpha):=
\limsup_{n\to\infty}
[a_{n},\dots,a_1]+a_{n+1}+[a_{n+2},a_{n+3},\dots].
$$
\end{remark}

\section{The formula for the orbit B7}
\label{sec:formula-for-orbit-B7}

\subsection{Description of the orbit B7 in $\cH(2)$}
\label{Sec:DescriptionOfOrbitB7}

Let $\cO$ be the B-orbit of primitive square tiled surfaces with $7$ squares. The orbit contains $36$ elements, partitioned into $8$ cusps. Denote the cusps by the letters
$$
A,B,C,D,E,F,G,H.
$$
For each cusp $X$ denote $w=w(X)$ its width, so that
$$
w(A)=w(B)=w(C)=7
\textrm{ , }
w(D)=w(G)=w(H)=3
\textrm{ , }
w(E)=5
\textrm{ and }
w(F)=1.
$$
Denote by $X_j$ the elements in the cusp $X$, where the index $j$
is an integer with $0\leq j\leq w(X)-1$, so that for example
the elements of cusp $C$ are
$C_0$, $C_1$, $C_2$, $C_3$, $C_4$, $C_5$, $C_6$. 

\begin{remark}
\label{RemWhyWeChoseOrbitB}
According to Lemma \ref{lemMultiplicityAlongPaths}, all the possible values of the function 
$$
m:\QQ\times\cO\to\NN^\ast
\textrm{ , }
(p/q,X_j)\mapsto m(p/q,X_j)
$$ 
are obtained by its restriction to $\{\infty\}\times\cO$. The co-slope $p/q=\infty$ corresponds to the horizontal direction, and it is evident from Figure \ref{fig:surfaces} that $m(\infty,X_j)\in\{1,2\}$ for any $X_j\in\cO$. It follows that for any $X_j\in\cO$ we have
$$
M_S=\max\{m(p/q,X_j),p/q\in\QQ\}=2.
$$
On the other hand for any $\alpha$ irrational we have 
$$
L(\TT^2,\alpha)\geq\sqrt{5}>2=M_S^2-2,
$$ 
therefore the formula for $L(X_i,\alpha)$ in Theorem \ref{thm:renormalizedformula} holds for any $\alpha$ irrational. This is the reason for choosing the orbit B.
\end{remark}

Denote by $\cG$ the directed graph whose vertices are the elements of $\cO$
and with labeled arrows for the action of $T$ and $R$,
as in section~\ref{sec:action-sltwoz-primitive-origamis}. The surfaces in $\cO$ are represented in figure~\ref{fig:surfaces}. The graph $\cG$ is represented in figure~\ref{fig:orbit-graph}, where the oriented arrows outside the circle represent the action of $T$, while the arrows inside the circle represent the action of $R$ and are unoriented since $R=R^{-1}$.

\begin{figure}[!ht]
\includegraphics[width=0.8\textwidth]{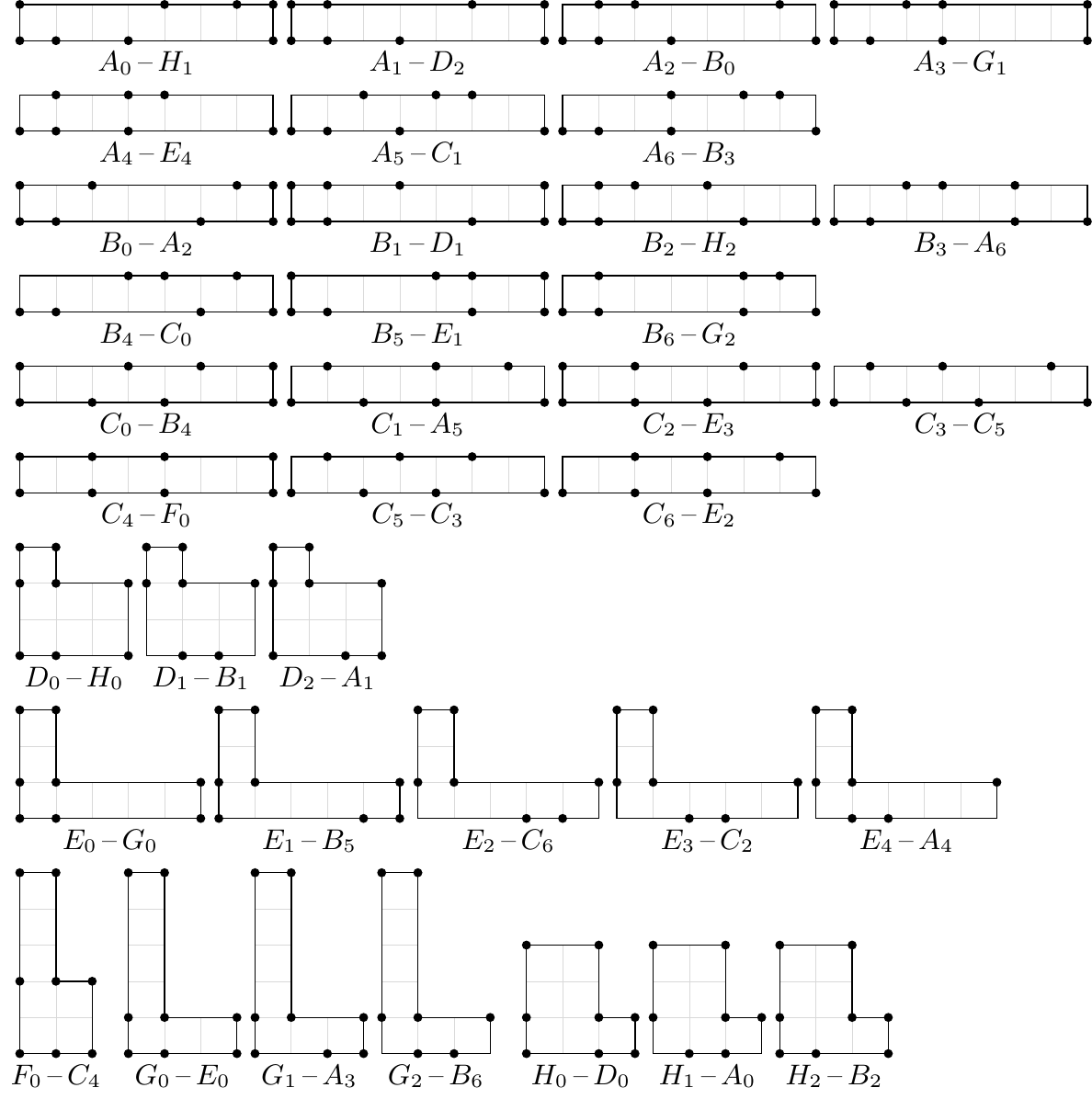}
\caption{Surfaces in orbit $\cO$, by cusp.
Under each surface, its name and the name of its image under $R$.}
\label{fig:surfaces}
\end{figure}

\begin{figure}[!ht]
\includegraphics[width=0.75\textwidth]{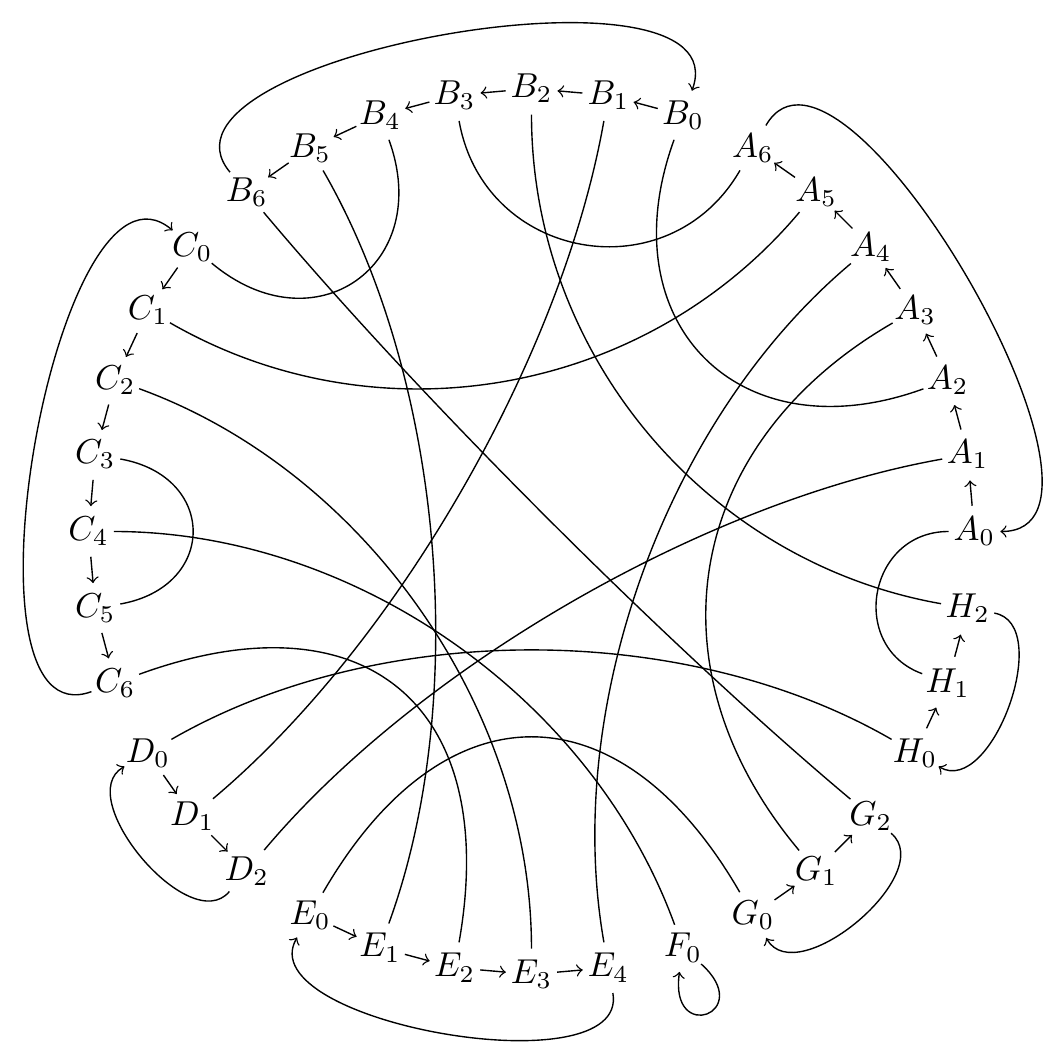}
\caption{The graph $\cG$ for orbit $\cO$, with arrows
along and outside the circle for the action of $T$,
and unoriented edges inside the circle for the (involutive) action of $R$.}
\label{fig:orbit-graph}
\end{figure}

Define a function $m:\cO\to\{1,2\}$ on the elements of $\cO$ as follows

\begin{eqnarray*}
&&
m(C_j)=2
\textrm{ for }
j=0,1,\dots,6.
\\
&&
m(X_j)=1
\textrm{ if }
X_j\not\in\{C_i;0\leq i\leq 6\}.
\\
\end{eqnarray*}

Fix $X_j$ and irrational number $\alpha=[a_0,a_1,\dots]$. The continued fraction expansion of $\alpha$ gives elements $g(a_1,\dots,a_{n-1},i)$ in $\sltwoz$, which are defined in \S~\ref{Sec:MultiplicityRationalDirections}, where $n$ is a positive integer and the integer $i$ satisfies $1\leq i\leq a_n$. They produce a path in $\cO$, denoted $(X_j,\alpha)$, whose vertices are the elements
$$
g(a_1,\dots,a_{n-1},i)\cdot X_j,
$$
spanned in order as $n$ grows and, for each fixed $n$, the index $i$ grows in the range $1\leq i\leq a_n$. More precisely, if $\alpha=[a_1,a_2,\dots]$ is the continued fraction expansion of $\alpha$, then the first $a_1$ vertices of the path  $(X_j,\alpha)$ in $\cO$ are 
$$
TR\cdot X_j,T^2R\cdot X_j,\dots,T^{a_1}R\cdot X_j,
$$
then the second $a_2$ vertices are 
$$
T^{-1}R(T^{a_1}R\cdot X_j),T^{-2}R(T^{a_1}R\cdot X_j),\dots,T^{-a_2}R(T^{a_1}R\cdot X_j)
$$ 
and so on. The vertices reached just before an arrow of type $R$, i.e. the ones of the form  $g(a_1,\dots,a_{n})\cdot X_j$, correspond to \emph{Gauss approximations}. The other vertices are of the form $g(a_1,\dots,a_{n-1},i)\cdot X_j$ for some intermediate $1\leq i <a_n$ and correspond to intermediate \emph{Farey approximations}. 

Define the support $\support(X_j,\alpha)$ as the set of those $Y_k\in\cO$ such that there exists infinitely many $n$ with
$
Y_k=g(a_1,\dots,a_n)\cdot X_j
$. 
Observe that the definition uses the Gauss approximations $[a_1,\dots,a_n]$ of $\alpha$, non just Faray approximations, thus elements of the form
$
Y_k=g(a_1,\dots,a_{n-1},i)\cdot X_j
$
for infinitely many $n$ and infinitely many $i$ with $1\leq j< a_n$ may \emph{not} belong to $\support(X_j,\alpha)$.

\subsection{The formula with continued fraction for the orbit B7}

Recall that any primitive origami $X_j$ in the orbit B7 has $N=7$ squares. As we explained in Remark \ref{RemWhyWeChoseOrbitB}, the formula for $L(X_i,\alpha)$ in Theorem \ref{thm:renormalizedformula} holds for any $\alpha=[a_0,a_1,\dots]$ irrational.

\begin{corollary}
\label{cor:renormalizedformulafororbitB7}
Let $X_j$ be any element in the orbit B7 in $\cH(2)$. Then for any irrational number $\alpha$ we have
\begin{equation}
\label{eqFormulaLagrangeConstant}
L(X_j,\alpha)=
7\cdot
\limsup_{n\to\infty}
\max_{1\leq i\leq a_n}
\frac{D(n,i,\alpha)}
{m^2\big(R\cdot g(a_1,\dots,a_{n-1},i)\cdot X_j\big)}.
\end{equation}
\end{corollary}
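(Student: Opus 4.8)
The plan is to derive Corollary \ref{cor:renormalizedformulafororbitB7} as an essentially immediate specialization of Theorem \ref{thm:renormalizedformula} to the orbit B7, with the main work being to verify that the hypothesis of that theorem is automatically satisfied for every irrational $\alpha\in(0,1)$. First I would recall that by Lemma \ref{lemMultiplicityAlongPaths} all values of the multiplicity function $m(p/q;X_j)$ over $\QQ\times\cO$ are realized already on $\{\infty\}\times\cO$, namely $m([a_1,\dots,a_n];S)=m(\infty;R\cdot g(a_1,\dots,a_n)\cdot S)$, and that inspection of Figure \ref{fig:surfaces} shows $m(\infty;X_j)\in\{1,2\}$ for every $X_j\in\cO$ (with value $2$ exactly on the cusp $C$). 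Hence $M_S=\max_{p/q\in\QQ}m(p/q;X_j)=2$ for every origami $X_j$ in the orbit B7, so $M_S^2-2=2$; this is precisely the content of Remark \ref{RemWhyWeChoseOrbitB}.

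Next I would observe that for any irrational $\alpha$ the classical lower bound on the Lagrange function gives $L(\TT^2,\alpha)\geq\sqrt 5>2=M_S^2-2$, so the hypothesis $L(\TT^2,\alpha)>M_S^2-2$ in Theorem \ref{thm:renormalizedformula} holds unconditionally for origamis in the orbit B7. Applying Theorem \ref{thm:renormalizedformula} with $N_S=7$ then yields
$$
L(X_j,\alpha)=7\cdot\limsup_{n\to\infty}\max_{1\leq i\leq a_n}\frac{D(n,i,\alpha)}{m^2\big(\infty;R\cdot g(a_1,\dots,a_{n-1},i)\cdot X_j\big)}.
$$
The only remaining cosmetic point is that in the statement of the Corollary the multiplicity is written as $m^2\big(R\cdot g(a_1,\dots,a_{n-1},i)\cdot X_j\big)$, i.e. using the function $m:\cO\to\{1,2\}$ defined on vertices of the orbit graph rather than $m(\infty;\cdot)$ on slopes. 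So I would close by noting that, by definition of the vertex function $m$ (which assigns $2$ to the elements of cusp $C$ and $1$ to all other vertices), one has $m(Y)=m(\infty;Y)$ for every $Y\in\cO$; this is exactly the identification of $m(\infty;X_j)$ with the combinatorial data read off Figure \ref{fig:surfaces}. Substituting this into the displayed formula gives Equation \eqref{eqFormulaLagrangeConstant}.

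I do not anticipate a genuine obstacle here: the statement is a corollary in the literal sense, and the proof is a two-line reduction. If anything, the one point deserving a sentence of care is the claim $m(\infty;X_j)\in\{1,2\}$, i.e. that no horizontal saddle connection on any $X_j$ in the orbit B7 winds more than twice around the horizontal core curve of $\TT^2$; this is the geometric fact singled out in the introduction as the reason the orbit B7 is tractable, and it is read directly off the pictures of the seven-square surfaces in Figure \ref{fig:surfaces}. Everything else is bookkeeping: matching $N_S=7$, checking $\sqrt 5>2$, and identifying the two notations for the multiplicity.
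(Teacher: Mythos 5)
Your proposal is correct and follows exactly the paper's own route: the paper justifies the corollary via Remark \ref{RemWhyWeChoseOrbitB}, which uses Lemma \ref{lemMultiplicityAlongPaths} and Figure \ref{fig:surfaces} to get $M_S=2$, then invokes $L(\TT^2,\alpha)\geq\sqrt{5}>2=M_S^2-2$ so that Theorem \ref{thm:renormalizedformula} applies with $N_S=7$ for every irrational $\alpha$. Your extra sentence identifying the vertex function $m:\cO\to\{1,2\}$ with $m(\infty;\cdot)$ is a correct and welcome piece of bookkeeping that the paper leaves implicit.
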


\subsection{Periodic elements}
\label{Sec:PeriodicElements}

An \emph{even loop} on $\cO$ is the datum $(X_j,\alpha)$, where $X_j\in\cO$ and $\alpha$ is a quadratic irrational
$
\alpha=[\overline{a_1,\dots,a_{2N}}]
$
whose period $a_1,\dots,a_{2N}$ has $2N$ entries, such that
$$
g(a_1,\dots,a_{2N})\cdot X_j=X_j.
$$

Let $\alpha=[\overline{a_1,\dots,a_{2N}}]$ be a quadratic irrational. The following is easy to prove.
\begin{enumerate}
\item
For any $n$ with $1\leq n\leq 2N$ we have
$$
D(n,a_n,\alpha)=
[\overline{a_n,\dots,a_1,a_{2N},\dots,a_{n+1}}]+
a_{n+1}+
[\overline{a_{n+2},\dots,a_{2N},a_1,\dots,a_{n+1}}]
$$
where the cyclic the order of the $2N$ entries $a_1,\dots,a_{2N}$ is inverted in the first summand and it is preserved in the third.
\item
For any $n$ with $1\leq n\leq 2N$ such that $a_n\geq 2$ and any $i$ with $1\leq i\leq a_n-1$ we have
$$
D(n,i,\alpha)=
[i,\overline{a_{n-1},\dots,a_1,a_{2N},\dots,a_{n}}]+
[a_n-i,\overline{a_{n+1},\dots,a_{2N},a_1,\dots,a_{n}}]
$$
where the cyclic order of the entries $a_1,\dots,a_{2N}$ is inverted in the period of the first summand the and it is preserved in the period of the second.
\end{enumerate}

For example, consider a quadratic irrational of the form
$
\alpha=[\overline{a_1,a_2,a_3,a_4,a_5,a_6}]
$,
where $2N=6$. For $n=3$ and $n=5$ we have respectively
\begin{eqnarray*}
&&
D(3,a_3,\alpha):=
[\overline{a_3,a_2,a_1,a_6,a_5,a_4}]+
a_4+
[\overline{a_5,a_6,a_1,a_2,a_3,a_4}]
\\
&&
D(5,a_5,\alpha):=
[\overline{a_5,a_4,a_3,a_2,a_1,a_6}]+
a_6+
[\overline{a_1,a_2,a_3,a_4,a_5,a_6}].
\\
\end{eqnarray*}
Moreover, for $n=3$, assuming that $a_3\geq 2$, for any $i$ with $1\leq i\leq a_3-1$ we have
$$
D(3,i,\alpha):=
[i,\overline{a_2,a_1,a_6,a_5,a_4,a_3}]+
[a_3-i,\overline{a_4,a_5,a_6,a_1,a_2,a_3}].
$$

For an even loop $(X_j,\alpha)$ where
$
\alpha=[\overline{a_1,\dots,a_{2N}}]
$
is a quadratic irrational whose period $a_1,\dots,a_{2N}$ has $2N$ entries Equation \eqref{eqFormulaLagrangeConstant} becomes

\begin{equation}
\label{eqFormulaLagrangeConstantQuadrIrrat}
L(X_i,\alpha)=
7\cdot
\max_{1\leq n\leq2N}
\max_{1\leq i\leq a_n}
\frac{D(n,i,\alpha)}
{m^2\big(R\cdot g(a_1,\dots,a_{n-1},i)\cdot X_j\big)}.
\end{equation}

\begin{lemma}
\label{lemMinimumAndOtherExample}
The data $(C_6,[\overline{1,3}])$ and $(C_3,[\overline{5,2}])$ define even loops in $\cO$ and we have
\begin{eqnarray*}
&&
L(C_6,[\overline{1,3}])=\phi_1=7+14\cdot[\overline{3,1}].
\\
&&
L(C_3,[\overline{5,2}])=
14\cdot[1,\overline{5,2}]=
11,832159\pm10^{-6}.
\end{eqnarray*}
\end{lemma}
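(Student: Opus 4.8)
The plan is to verify directly, for each of the two data, that the given quadratic irrational $\alpha$ produces a periodic path that closes up on the claimed vertex of $\cG$, and then to evaluate formula \eqref{eqFormulaLagrangeConstantQuadrIrrat}. First I would check the even-loop conditions: for $(C_6,[\overline{1,3}])$ the period is $a_1,a_2=1,3$, so $2N=2$ and one must verify $g(1,3)\cdot C_6 = T^{-3}R\,T R\cdot C_6 = C_6$ by tracing the action of $T$ and $R$ along the graph in Figure \ref{fig:orbit-graph} (recalling that $g(a_1,a_2)=(T^{-a_2}R)(T^{a_1}R)$ for even $n$). Similarly for $(C_3,[\overline{5,2}])$ one traces $g(5,2)\cdot C_3 = T^{-2}R\,T^5 R\cdot C_3 = C_3$. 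These are finite computations in the $36$-vertex graph; the key point is that they land back on a vertex of the cusp $C$, where $m=2$.

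Next, with the loops established, I would apply \eqref{eqFormulaLagrangeConstantQuadrIrrat}: $L(X_j,\alpha) = 7\cdot\max_{1\le n\le 2N}\max_{1\le i\le a_n} D(n,i,\alpha)/m^2\big(R\cdot g(a_1,\dots,a_{n-1},i)\cdot X_j\big)$. For $[\overline{1,3}]$ the inner maxima range over $n\in\{1,2\}$ and, for $n=2$ (where $a_2=3\ge 2$), over $i\in\{1,2,3\}$; for $[\overline{5,2}]$ over $n\in\{1,2\}$ and $i\in\{1,\dots,5\}$ when $n=1$ and $i\in\{1,2\}$ when $n=2$. Using the explicit expressions for $D(n,i,\alpha)$ recorded just before the Lemma (the cyclic-inversion formulas), each $D(n,i,\alpha)$ becomes an explicit quadratic surd; one then has to identify, for each choice of $n$ and $i$, the vertex $R\cdot g(a_1,\dots,a_{n-1},i)\cdot X_j$ reached along the path — again read off from Figure \ref{fig:orbit-graph} — to know whether the denominator is $1$ or $4$. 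The maximum of the resulting finite list of values, times $7$, should give the stated closed forms; for instance for $(C_6,[\overline{1,3}])$ the maximizing term is the Gauss approximation at the step landing in cusp $C$, giving $D = [\overline{3,1}]+1+[\overline{3,1}] = 1+2[\overline{3,1}]$ and denominator $4$, so $L = 7(1+2[\overline{3,1}])/4\cdot\ldots$ — wait, more precisely $7\cdot D/4$ with the other candidates (Farey steps, and Gauss steps outside cusp $C$) checked to be smaller; the algebraic simplification $7+14[\overline{3,1}] = 7\sqrt{21}/3$ follows from $[\overline{3,1}]$ solving $x = 1/(3+1/(1+x))$. Analogously $L(C_3,[\overline{5,2}]) = 14[1,\overline{5,2}]$ comes out as the $n=1$, $i=a_1=5$ term with denominator $4$.

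The main obstacle is bookkeeping rather than conceptual: one must correctly trace the two periodic paths through the $36$-element graph $\cG$, identify at each relevant step (each Gauss step and each intermediate Farey step of the period) whether the current vertex lies in the cusp $C$ (multiplicity $2$) or not (multiplicity $1$), and then confirm that the claimed value is indeed the maximum over all $2N\cdot\max_n a_n$ candidate terms and not merely one of them. The delicate part is ruling out that some Farey-approximation term, or some Gauss term passing through cusp $C$ at a different position, exceeds the claimed value — this requires comparing several explicit quadratic irrationals. I expect the authors to streamline this by noting that a term can only be large when its denominator is small (i.e. the vertex is \emph{not} in $C$, $m=1$) \emph{or} when the corresponding $D(n,i,\alpha)$ is unusually large; a short case analysis along the period then isolates the maximizing term. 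The final step, converting $7+14[\overline{3,1}]$ to $7\sqrt{21}/3$ and $14[1,\overline{5,2}]$ to its surd form and checking the numerical approximations, is a routine computation with periodic continued fractions.
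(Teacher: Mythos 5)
Your overall strategy is exactly the paper's: verify that $g(1,3)\cdot C_6=C_6$ and $g(5,2)\cdot C_3=C_3$ by tracing the graph, then evaluate formula \eqref{eqFormulaLagrangeConstantQuadrIrrat} as a finite maximum over the $4$ (resp.\ $7$) candidate terms, reading off each multiplicity from the vertex reached. The case enumeration ($n\in\{1,2\}$, $i\le a_n$) and the loop verification are fine.

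However, your identification of the maximizing terms is wrong in both cases, and in a way that is internally inconsistent with the answers you state. For $(C_6,[\overline{1,3}])$ you attach the correct numerator $D=1+2[\overline{3,1}]$ to ``the Gauss step landing in cusp $C$'' with denominator $4$; but $7D/4\approx 2.67$, which is not $\phi_1=7(1+2[\overline{3,1}])\approx 10.70$. The maximum is in fact attained at the $n=2$ Gauss step, where $R\cdot g(1,3)\cdot C_6=E_2$ has multiplicity $1$, so the denominator is $1$ and $7D/1=\phi_1$; the cusp-$C$ term $(n,i)=(1,1)$ gives only $([\overline{1,3}]+3+[\overline{1,3}])/4\approx1.146<1.528$. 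Likewise for $(C_3,[\overline{5,2}])$ the term you name, $n=1$, $i=5$ with denominator $4$, equals $(2+2[\overline{5,2}])/4\approx 0.592$ and is actually the \emph{smallest} of the seven candidates; the maximum is the $n=2$, $i=1$ term, where the path sits at $F_0$ with multiplicity $1$, giving $2[1,\overline{5,2}]$ and hence $14[1,\overline{5,2}]$. Since the entire content of this lemma is precisely this multiplicity bookkeeping, these are not cosmetic slips: as written, your computation does not produce the stated values. Note that your own heuristic --- a term can only be large when its denominator is $1$ --- already predicts the correct answer and contradicts the specific claims you make; following it through would fix the argument.
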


\begin{proof}
One can  verify that both $(C_6,[\overline{1,3}])$ and $(C_3,[\overline{5,2}])$ are even loops with $2N=2$, which before closing up go respectively through the vertices $E_3,C_1,C_0$ and through the vertices $C_6,C_0,C_1,C_2,C_3,C_4$ (recall that the vertices of the graph after an $R$ arrow are \emph{not} recorded in the loop). Moreover, to simplify the notation, we divide everything by a factor $7$. To compute $L(C_6,[\overline{1,3}])$ we apply Equation \eqref{eqFormulaLagrangeConstantQuadrIrrat} and take the maximum among the values below
\begin{eqnarray*}
&&
\frac{D(1,1,[\overline{1,3}])}
{m^2\big(R\cdot g(1)\cdot C_6\big)}=
\frac{[\overline{1,3}]+3+[\overline{1,3}]}
{m^2\big(C_2\big)}=
\frac{[\overline{1,3}]+3+[\overline{1,3}]}{4}
=1,1456435\pm10^{-6}
\\
&&
\frac{D(2,1,[\overline{1,3}])}
{m^2\big(R\cdot g(1,1)\cdot C_6\big)}=
\frac{[1,\overline{1,3}]+[2,\overline{1,3}]}
{m^2\big(A_5\big)}=
\frac{[1,\overline{1,3}]+[2,\overline{1,3}]}{1}
=0,916515\pm10^{-6}
\\
&&
\frac{D(2,2,[\overline{1,3}])}
{m^2\big(R\cdot g(1,2)\cdot C_6\big)}=
\frac{[2,\overline{1,3}]+[1,\overline{1,3}]}
{m^2\big(B_4\big)}=
\frac{[2,\overline{1,3}]+[1,\overline{1,3}]}{1}
=0,916515\pm10^{-6}
\\
&&
\frac{D(2,3,[\overline{1,3}])}
{m^2\big(R\cdot g(1,3)\cdot C_6\big)}=
\frac{[\overline{1,3}]+1+[\overline{1,3}]}
{m^2\big(E_2\big)}=
\frac{[\overline{3,1}]+1+[\overline{3,1}]}{1}
=1,527524\pm10^{-6}.
\end{eqnarray*}
The first part is proved recalling that $\phi_1=7(1+[\overline{1,3}])$. To compute $L(C_3,[\overline{5,2}])$ we apply Equation \eqref{eqFormulaLagrangeConstantQuadrIrrat} and take the maximum among the values below
\begin{eqnarray*}
&&
\frac{D(1,1,[\overline{5,2}],C_3)}
{m^2\big(R\cdot g(1)\cdot C_3\big)}=
\frac
{[1,\overline{2,5}]+[4,\overline{2,5}]}
{m^2\big(E_2\big)}=
\frac
{[1,\overline{2,5}]+[4,\overline{2,5}]}
{1}
=0,910165\pm10^{-6}
\\
&&
\frac{D(1,2,[\overline{5,2}],C_3)}
{m^2\big(R\cdot g(2)\cdot C_3\big)}=
\frac
{[2,\overline{2,5}]+[3,\overline{2,5}]}
{m^2\big(B_4\big)}=
\frac
{[2,\overline{2,5}]+[3,\overline{2,5}]}
{1}
=0,696009\pm10^{-6}
\\
&&
\frac{D(1,3,[\overline{5,2}],C_3)}
{m^2\big(R\cdot g(3)\cdot C_3\big)}=
\frac
{[3,\overline{2,5}]+[2,\overline{2,5}]}
{m^2\big(A_5\big)}=
\frac
{[2,\overline{2,5}]+[3,\overline{2,5}]}
{1}
=0,696009\pm10^{-6}
\\
&&
\frac{D(1,4,[\overline{5,2}],C_3)}
{m^2\big(R\cdot g(4)\cdot C_3\big)}=
\frac
{[4,\overline{2,5}]+[1,\overline{2,5}]}
{m^2\big(E_3\big)}=
\frac
{[4,\overline{2,5}]+[1,\overline{2,5}]}
{1}
=0,910165\pm10^{-6}
\\
&&
\frac{D(1,5,[\overline{5,2}],C_3)}
{m^2\big(R\cdot g(5)\cdot C_3\big)}=
\frac
{[\overline{5,2}]+2+[\overline{5,2}]}
{m^2\big(C_5\big)}=
\frac
{2+2[\overline{5,2}]}
{4}
=0,591607\pm10^{-6}
\\
&&
\frac{D(2,1,[\overline{5,2}],C_3)}
{m^2\big(R\cdot g(5,1)\cdot C_3\big)}=
\frac
{[1,\overline{5,2}]+[1,\overline{5,2}]}
{m^2\big(F_0\big)}=
\frac
{2[1,\overline{5,2}]}
{1}
=1,690309\pm10^{-6}
\\
&&
\frac{D(2,2,[\overline{5,2}],C_3)}
{m^2\big(R\cdot g(5,2)\cdot C_3\big)}=
\frac
{[\overline{2,5}]+5+[\overline{2,5}]}
{m^2\big(C_5\big)}=
\frac
{5+2[\overline{2,5}]}
{4}
=1,479019\pm10^{-6}.
\end{eqnarray*}
The second is proved observing that
$
14\cdot[1,\overline{5,2}]
=
7\cdot1,690309\pm10^{-6}
$.
\end{proof}

\section{Proof of Theorem \ref{TheoremReductionToSubshift}}
\label{Sec:ProofOfTheoremReductionToSubshift}

This section is devoted to the proof of Theorem \ref{TheoremReductionToSubshift}. For $i=1,2,3$ consider the quadratic irrationals $\eta_i$ with $\phi_\infty<\eta_1<\eta_2<\eta_3$ defined by
\begin{eqnarray*}
&&
\eta_1:=
7\cdot\frac{[1,4,2,\overline{1,5}]+5+[1,5,1,\overline{1,5}]}{4}
=
11,655309\pm10^{-6}
\\
&&
\eta_2:=
7\cdot\frac{[1,\overline{1,6}]+6+[\overline{6,1}]}{4}=
11,688957\pm10^{-6}
\\
&&
\eta_3:=
7\cdot\big(
1+[1,\overline{1,6}]+[\overline{6,1}]
\big)
=
11,755835\pm10^{-6}.
\end{eqnarray*}

In \S~\ref{SecIntermediateAndSmallGraph} below we consider a graph $\cA$, obtained from a subgraph of $\cG$, that we call \emph{accessible graph}. Then we introduce a graph $\cI$ called \emph{intermediate graph}, whose vertices are the vertices of $\cA$ and whose arrows are some combinations of the arrows of $\cA$, and finally we define a subgraph $\cS$ of $\cI$, called \emph{small graph}. In Proposition \ref{PropReductionToIntermediateGraph} we show that data $X_j$ and $\alpha$ with $L(X_j,\alpha)<\eta_3$ satisfy
$
\support(X_j,\alpha)\subset\cA
$
and $a_n\leq6$ eventually, which means that the path $(X_j,\alpha)$ can be decomposed into elementary operations which are arrows of the intermediate graph $\cI$. Then in Proposition \ref{propReductionToSmallgraph} we show that 
if $L(X_j,\alpha)<\eta_1$ then the path $(X_j,\alpha)$ can be decomposed into elementary operations which are arrows of the small graph $\cS$, and this is the main technical tool in the proof of Theorem \ref{TheoremReductionToSubshift}. As an intermediate step towards the proof of Proposition \ref{propReductionToSmallgraph}, in Corollary~\ref{CoroReductionTointermediateGraph} we prove that if $L(X_j,\alpha)<\eta_2$ then $\support(X_j,\alpha)\subset\cA$ and $a_n\leq5$ eventually.

\subsection{The intermediate graph $\cI$ and the small graph $\cS$}

\label{SecIntermediateAndSmallGraph}

For $X_j\in\cO$ and a positive integer $a$, let $Y_k$ and $Z_l$ be the two elements in $\cO$ such that we have respectively $T^aR(X_j)=Y_k$ and $T^{-a}R(X_j)=Z_l$.  Represent the two elementary operations above as
$$
X_j
\quad\substack{a+\\ \longrightarrow}\quad
Y_k
\textrm{ and }
X_j
\quad\substack{a-\\ \longrightarrow}\quad
Z_l.
$$

The \emph{accessible graph} $\cA$  (shown  to the right in Figure \ref{FigureIntermediateGraphA}) is the graph whose vertices are $C_2$, $C_3$, $C_4$, $C_5$, $C_6$, $E_2$, $E_3$ and $F_0$ and whose elementary arrows are
\begin{eqnarray*}
&&
T:C_i\to C_{i+1}
\textrm{ for }
i=2,3,4,5
\\
&&
T^3:C_6\to C_2
\\
&&
T:F_0\to F_0
\\
&&
T:E_2\to E_3
\\
&&
R:E_2\to C_6
\textrm{ , }
R:E_3\to C_2
\textrm{ and }
R:E_3\to C_5.
\end{eqnarray*}

The accessible graph is  induced from a proper subgraph of $\cG$ as follows.  
 In Figure \ref{FigureIntermediateGraphA}, the accessible graph $\cA$ is represented on the right side, whereas on the left side one can see a graph which is a proper subgraph of the graph $\cG$: $\cA$ is obtained from the subgraph of $\cG$ by removing the vertices $C_0$ and $C_1$ and the arrows $T:C_6\to C_0$, $T:C_0\to C_1$ and $T:C_1\to C_2$, then replacing them by the arrow $T^3:C_6\to C_2$. 
We stress that even if the elements $C_0$ and $C_1$ are not part of the accessible graph $\cA$,  paths $(X_j,\alpha)$ whose support is in $\cA$ and which only use arrows in $\cA$ pass through these to elements, but never at times corresponding to Gauss approximations, since no $R$ arrow leaves from this vertices. 

\begin{figure}[!ht]
\includegraphics[width=.33\textwidth]{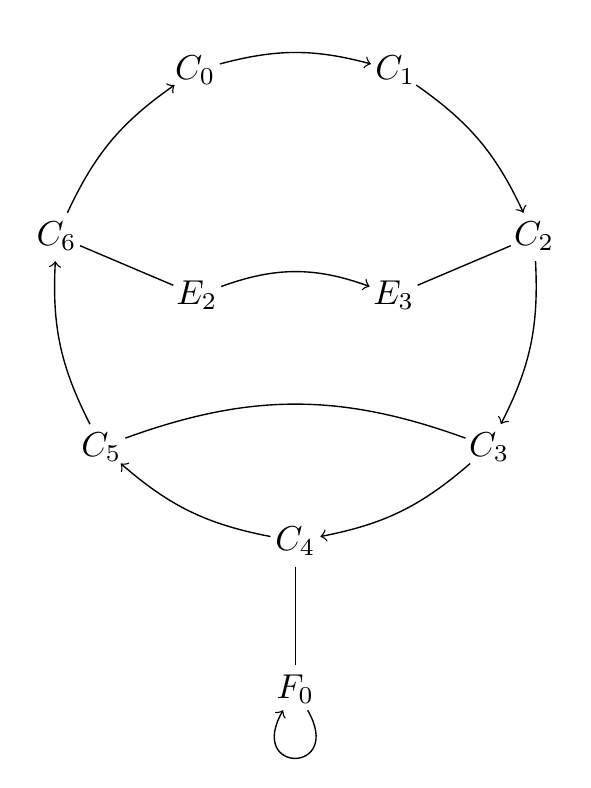}
\hspace{.6cm}
\includegraphics[width=.33\textwidth]{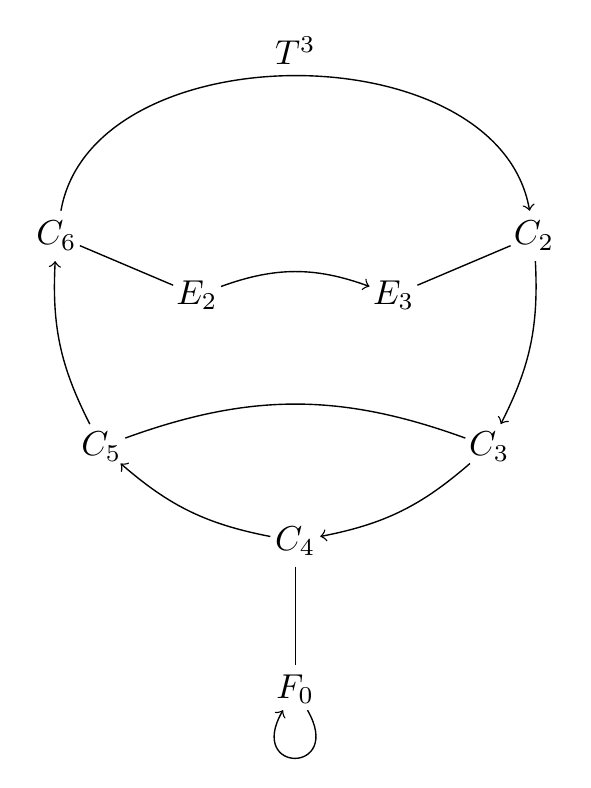}
\caption{Removing the vertices $C_0$ and $C_1$ and the arrows $T:C_6\to C_0$, $T:C_0\to C_1$ and $T:C_1\to C_2$ from the graph on the left of the picture, which is a subgraph of $\cG$, and replacing them by the arrow $T^3:C_6\to C_2$, one gets the accessible graph $\cA$, represented on the right of the picture, which contains all paths that can contribute to the bottom of the Lagrange spectrum.}
\label{FigureIntermediateGraphA}
\end{figure}

We say that an elementary operation
$
X_j
\quad\substack{a\pm\\ \longrightarrow}\quad
Y_k
$ 
is \emph{admissible in} $\cA$ if it appears infinitely many times in a path $(X_j,\alpha)$ with support in $\cA$.

Let us now introduce the \emph{intermediate graph} $\cI$, which is represented in Figure \ref{FigureIntermediateGraphB} and is the graph induced from $\cA$ taking some admissible elementary operations (see Lemma \ref{LemInducingIntermediateGraph} below). The vertices of the intermediate graph $\cI$ are $C_2$, $C_3$, $C_4$, $C_5$, $C_6$, $E_2$, $E_3$ and $F_0$. The arrows in $\cI$ correspond to the $30$ elementary operations listed here below, where for convenience of notation we also introduce a short name for each, which is either $\cF_{\xi}$ or $\cG_{\xi}$, or a primed version of the same names, i.e. $\cF_{\xi}'$ or $\cG_{\xi}'$, and where $\xi$ is a label. Primes reflect a symmetry of the graph under the involution $\psi$ defined below, primed arrows are shown in a lighter shade of the corresponding unprimed version in Figure \ref{FigureIntermediateGraphB}. The label $\xi$ associated to an elementary operation 
$
X_j
\quad\substack{a\pm\\ \longrightarrow}\quad
Y_k
$ 
is a contraction (preserving the injectivity of the labelling) of the three-symbols data $XaY$, representing the cusps $X$ and $Y$ respectively of the starting point $X_j$ and the ending point $Y_k$ of the elementary operation, and the number $a$ of iterates of $T$ or of $T^{-1}$ that it contains. The elementary operations named by $\cF_\xi$ are the ones which will survive in a further reduction.

\begin{eqnarray*}
&&
\cF_1:=
C_6\quad\substack{1+\\ \longrightarrow }\quad E_3
\quad
\textrm{ and }
\quad
\cF'_1:=
C_2\quad\substack{1-\\ \longrightarrow }\quad E_2
\\
&&
\cF_2:=
C_5\quad\substack{2+\\ \longrightarrow }\quad C_5
\quad
\textrm{ and }
\quad
\cF'_2:=
C_3\quad\substack{2-\\ \longrightarrow }\quad C_3
\\
&&
\cF_3:=
E_3\quad\substack{3-\\ \longrightarrow }\quad C_6
\quad
\textrm{ and }
\quad
\cF'_3:=
E_2\quad\substack{3+\\ \longrightarrow }\quad C_2
\\
&&
\cF_{C4}:=
C_5\quad\substack{4-\\ \longrightarrow }\quad C_6
\quad
\textrm{ and }
\quad
\cF'_{C4}:=
C_3\quad\substack{4+\\ \longrightarrow }\quad C_2
\\
&&
\cF_{E4}:=
E_3\quad\substack{4-\\ \longrightarrow }\quad C_5
\quad
\textrm{ and }
\quad
\cF'_{E4}:=
E_2\quad\substack{4+\\ \longrightarrow }\quad C_3
\end{eqnarray*}

\begin{eqnarray*}
&&
\cG_{CC}:=
C_5\quad\substack{1+\\ \longrightarrow }\quad C_4
\quad
\textrm{ and }
\quad
\cG'_{CC}:=
C_3\quad\substack{1-\\ \longrightarrow }\quad C_4
\\
&&
\cG_{CF}:=
C_4\quad\substack{1+\\ \longrightarrow }\quad F_0
\quad
\textrm{ and }
\quad
\cG'_{CF}:=
C_4\quad\substack{1-\\ \longrightarrow }\quad F_0
\\
&&
\cG_{FC}:=
F_0\quad\substack{1+\\ \longrightarrow }\quad C_5
\quad
\textrm{ and }
\quad
\cG'_{FC}:=
F_0\quad\substack{1-\\ \longrightarrow }\quad C_3
\\
&&
\cG_{E5}:=
E_3\quad\substack{5-\\ \longrightarrow }\quad C_4
\quad
\textrm{ and }
\quad
\cG'_{E5}:=
E_2\quad\substack{5+\\ \longrightarrow }\quad C_4
\\
&&
\cG_{C5}:=
C_5\quad\substack{5-\\ \longrightarrow }\quad C_5
\quad
\textrm{ and }
\quad
\cG'_{C5}:=
C_3\quad\substack{5+\\ \longrightarrow }\quad C_3
\\
&&
\cG_{5C}:=
F_0\quad\substack{5-\\ \longrightarrow }\quad C_6
\quad
\textrm{ and }
\quad
\cG'_{5C}:=
F_0\quad\substack{5+\\ \longrightarrow }\quad C_2
\\
&&
\cG_{6C}:=
C_5\quad\substack{6-\\ \longrightarrow }\quad C_4
\quad
\textrm{ and }
\quad
\cG'_{6C}:=
C_3\quad\substack{6+\\ \longrightarrow }\quad C_4
\\
&&
\cG_{F6}:=
F_0\quad\substack{6-\\ \longrightarrow }\quad C_5
\quad
\textrm{ and }
\quad
\cG'_{6C}:=
F_0\quad\substack{6+\\ \longrightarrow }\quad C_3
\\
&&
\cG_{E6}:=
E_3\quad\substack{6-\\ \longrightarrow }\quad C_3
\quad
\textrm{ and }
\quad
\cG'_{E6}:=
E_2\quad\substack{6+\\ \longrightarrow }\quad C_5
\\
&&
\cG_{C6}:=
C_5\quad\substack{6+\\ \longrightarrow }\quad C_2
\quad
\textrm{ and }
\quad
\cG'_{C6}:=
C_3\quad\substack{6-\\ \longrightarrow }\quad C_6.
\end{eqnarray*}

\begin{lemma}
\label{LemInducingIntermediateGraph}
The elementary operations 
$
X_j
\quad\substack{a\pm\\ \longrightarrow}\quad
Y_k
$ 
with $1\leq a\leq 6$ which are admissible in $\cA$ are exactly the 30 elementary arrows of the intermediate graph $\cI$.
\end{lemma}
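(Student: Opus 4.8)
The statement is a finite combinatorial verification, so the plan is to enumerate all candidate operations and then cut the list down to the admissible ones. The preliminary step is to tabulate the action of $T$ and $R$ on the accessible region. A path built only from arrows of $\cA$ visits, besides the eight vertices of $\cA$, only the two surfaces $C_0$ and $C_1$, and it meets them exclusively while traversing the compound arrow $T^3\colon C_6\to C_2$, hence never at a Gauss time. One records the action of $R$ on the vertices of $\cA$ — it pairs $C_6\leftrightarrow E_2$, $C_2\leftrightarrow E_3$, $C_3\leftrightarrow C_5$ and $C_4\leftrightarrow F_0$ — and the $T$-orbit structure: the full $7$-cycle $C_2\to C_3\to C_4\to C_5\to C_6\to C_0\to C_1\to C_2$ inside the cusp $C$, the single reversible arrow $E_2\leftrightarrow E_3$ inside the cusp $E$, and the fixed point $T\cdot F_0=F_0$.

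The enumeration itself is then mechanical. For each of the eight vertices $X_j$ of $\cA$ and each sign, I would follow the chain $R\cdot X_j,\,T^{\pm1}R\cdot X_j,\,T^{\pm2}R\cdot X_j,\dots$, stepping only along arrows of $\cA$, and list every $a$ with $1\le a\le6$ for which $T^{\pm a}R\cdot X_j$ is again a \emph{proper} vertex of $\cA$, i.e.\ lands in $\{C_2,\dots,C_6,E_2,E_3,F_0\}$ and not on a transit vertex $C_0,C_1$. This produces a finite list of candidate operations. The involution $\psi$, which exchanges each operation $\cF_\xi,\cG_\xi$ with its primed counterpart, halves the work: it suffices to run the enumeration for the starting vertices $C_6$, $C_5$, $C_4$, $F_0$ and $E_3$ and then symmetrise.

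The delicate step is to discard the candidates that cannot recur infinitely often in a path with support in $\cA$. The key observation is that the cusp $E$ contributes only the two vertices $E_2,E_3$ to the accessible region, and $\cA$ has no $T$-arrow entering $E_2$ and none entering $E_3$ other than $E_2\to E_3$; hence $E_3$ can be a Gauss approximation only as the endpoint of $\cF_1$, and $E_2$ only as the endpoint of $\cF'_1$. Since renormalization along the continued fraction alternates the signs of consecutive operations, this fixes the sign of every admissible operation leaving $E_2$ or $E_3$, and — because $\cF_1$ is the unique operation leaving $C_6$ and $\cF'_1$ the unique one leaving $C_2$ — it also fixes the parity at which $C_6$ and $C_2$ can be reached. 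Any candidate violating one of these constraints (for example any $-$ operation out of $E_2$, any $+$ operation ending at $C_6$ such as $C_5\,\substack{3+\\ \longrightarrow}\,C_6$, or any $-$ operation ending at $C_2$) cannot occur even once in such a path and is removed; a short direct inspection at the fixed point $F_0$ removes the remaining candidates $C_4\,\substack{a\pm\\ \longrightarrow}\,F_0$ with $a\ge2$. Counting the survivors — one out of each of $C_2$ and $C_6$, four out of each of $E_2$ and $E_3$, six out of each of $C_3$ and $C_5$, two out of $C_4$ and six out of $F_0$ — gives exactly $30$, and comparison with the list shows they are precisely the arrows of $\cI$.

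It remains to check that each of these $30$ operations really is admissible, which follows once one notes that $\cI$ carries a closed walk respecting the sign alternation through every one of its arrows, so each arrow is realised infinitely often by a suitable periodic continued fraction whose path has support in $\cA$. The main obstacle is the third paragraph: keeping the bookkeeping straight while following the $T$-orbit of the cusp $C$ through the two transit vertices, and correctly eliminating the non-recurrent candidates by the parity/sign-alternation argument around $C_2$, $C_6$, $E_2$, $E_3$ and at the fixed point $F_0$; the other steps are routine once the action of $T$ and $R$ on the accessible region has been tabulated.
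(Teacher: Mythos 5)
Your strategy coincides with the paper's: tabulate the $T$- and $R$-actions on the accessible region, enumerate the candidates $T^{\pm a}R$ whose Gauss endpoint and intermediate Farey vertices all stay in that region, and then use the sign alternation of consecutive operations (the paper's ``compatibility with concatenation'') to discard the candidates of the wrong sign leaving $E_2$, $E_3$ or entering $C_2$, $C_6$. Your tabulation of the involution $R$ and of the $T$-orbits is correct, and the parity bookkeeping at $C_2$, $C_6$, $E_2$, $E_3$ legitimately accounts for fourteen of the twenty-four candidates that must be discarded.

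The gap is the remaining ten candidates $C_4\quad\substack{a\pm\\ \longrightarrow}\quad F_0$ with $2\le a\le 6$, which you dismiss by ``a short direct inspection at the fixed point $F_0$''. No such inspection is available: since $T\cdot F_0=F_0$ is an arrow of $\cA$, every intermediate vertex of these operations equals $F_0\in\cA$, and since arrows of both signs enter $C_4$ (for instance $\cG_{CC}$ is $1+$ while $\cG'_{CC}$ is $1-$) there is no parity obstruction either. Concretely, the periodic path $(C_4,[\overline{2,1,6,2,6,1}])$, whose successive Gauss vertices are $F_0,C_3,C_4,F_0,C_3,C_4,\dots$ and which traverses only arrows of the accessible graph, contains $C_4\quad\substack{2+\\ \longrightarrow}\quad F_0$ infinitely often, so that operation is admissible in $\cA$ in the purely combinatorial sense of the definition. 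The paper eliminates these ten operations only later and by a metric argument: because $m^2(R\cdot C_4)=m^2(F_0)=1$, the continued fraction entry following a visit to $C_4$ must equal $1$ once $L(X_j,\alpha)$ is suitably bounded (Lemma \ref{lemObstructionForMultiplicity1}, invoked again explicitly at the beginning of \S\ref{SecProofOfPropositionReductionToSmallGraph}). That multiplicity input is not available inside the present combinatorial lemma, so your proof cannot be completed at this one point by graph inspection alone; you would either have to import the bound on $L$ from Lemma \ref{lemObstructionForMultiplicity1}, or accept that the list of combinatorially admissible operations is larger than the thirty arrows of $\cI$ and that the extra ones are pruned downstream.
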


\begin{proof} 
The proof is a direct verification. For any vertex $X_j$ of the accessible graph $\cA$ we consider all the admissible elementary operations as in the statement which start at $X_j$. For example 
$
C_6
\quad\substack{1+\\ \longrightarrow}\quad
E_3
$ 
is the only elementary operation starting at $C_6$. Then compatibility with concatenation implies that 
$
E_3
\quad\substack{a-\\ \longrightarrow}\quad
C_{9-a}
$ 
with $a=3,4,5,6$ are the only admissible operations starting at $E_3$, where $a=1,2$ are not allowed because $C_0$ and $C_1$ do not belong to $\cA$. Details are left to the reader.
\end{proof}

\begin{figure}[!ht]
\includegraphics[width=.48\textwidth]{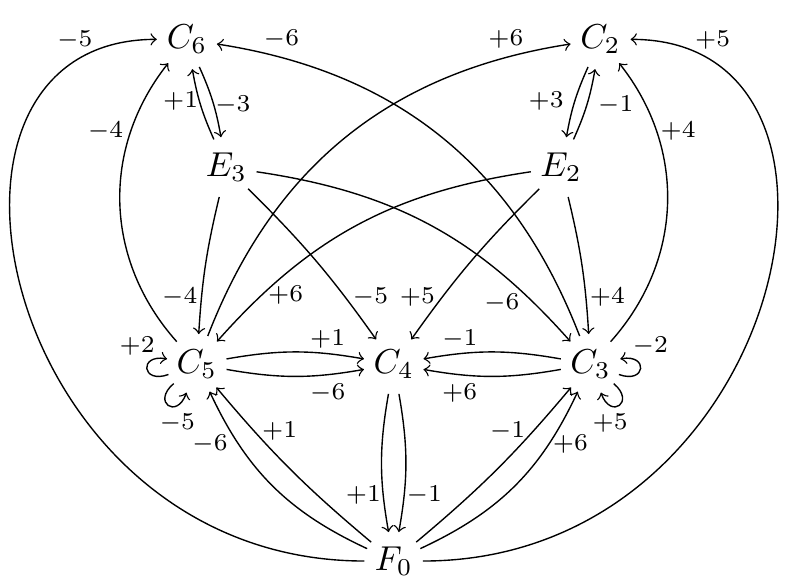}
\hspace{2mm}
\includegraphics[width=.48\textwidth]{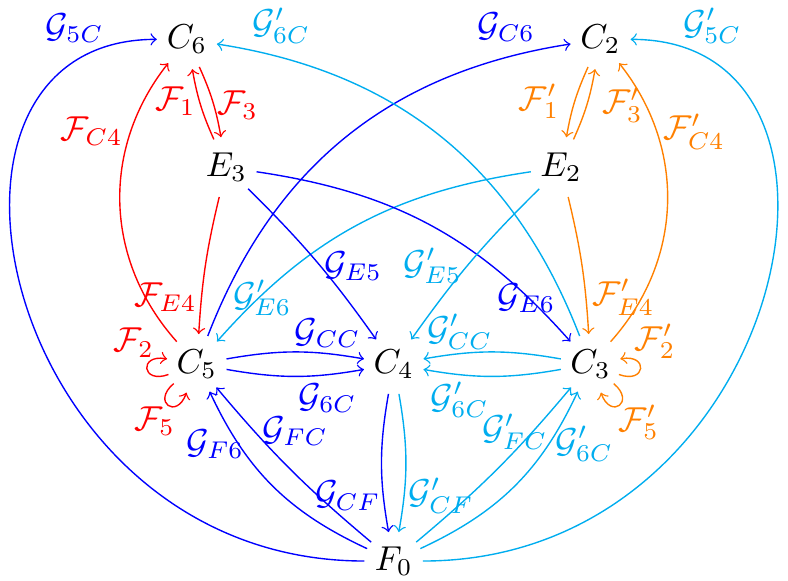}
\caption{The elementary operations with are admissible in the accessible graph $\cA$ are the arrows of the intermediate graph $\cI$. In the left copy, these elementary operations are represented explicitly, whereas in the right copy they are represented by their short names. Colors are explained in the text.}
\label{FigureIntermediateGraphB}
\end{figure}


The graph $\cI$ has an internal symmetry which we will exploit to further reduce the paths to consider to study the bottom of the spectrum. In Figure \ref{FigureIntermediateGraphB} this symmetry correspond to a reflection through a vertical axes passing through $C_4$ and $F_0$. Formally, we describe this symmetry by defining an involution $\psi:\cI\to\cI$ which acts on vertices by sending
$$
\psi(C_2)=C_6
\textrm{ , }
\psi(C_3)=C_5
\textrm{ , }
\psi(E_3)=E_2
\textrm{ , }
\psi(C_4)=C_4
\textrm{ , }
\psi(F_0)=F_0 .
$$
One can easily verify that  $\psi^2=\textrm{Id}$ and hence $\psi$ is indeed an involution.  Extend the function $\psi$ to the set of paths on $\cS$ by
\begin{eqnarray*}
&&
\psi\big(\cF_\xi\big):=\cF'_\xi
\textrm{ for any label }
\xi=1,2,3,C4,E4
\\
&&
\psi\big(\cG_\xi\big):=\cG'_\xi
\textrm{ for any label }
\xi=CC,CF,FC,E5,C5,5C,6C,F6,E6,C6.
\end{eqnarray*}

\begin{remark}
\label{involutionrespectsmultiplicities}
Observe that the multiplicity is preserved by $\psi$, that is
$
m\big(\psi(X_j)\big)=m(X_j)
$
for any element $X_j\in\cI$. Furthermore, notice that $\psi$ maps arrows labeled by $+a$ to $-a$ and conversely. 
\end{remark}

If we consider only paths which eventually do not cross the vertex $F_0$ (and hence neither the vertex $C_4$), one can check that the remaining paths live in one of two connected components, mapped to each other by the involution. Let us hence define a graph which encodes (a subset of) the operations of one of these two connected components. 
 
The \emph{small graph} (show in Figure \ref{smallgraph}) is  the subgraph $\cS$ of $\cI$  whose vertices are the elements $C_5$, $C_6$, $E_3$ and whose arrows are $\cF_1$, $\cF_2, \cF_3, \cF_{4C}, \cF_{4E}$.  
Observe that the subgraph $\cI'$ of $\cI$ whose vertices are $C_2$, $C_3$, $C_5$, $C_6$, $E_2$, $E_3$ and whose arrows are $\cF_\xi$ and $\cF'_\xi$ for $\xi=1,2,3,4C,4E$ has two connected components, one being the small graph $\cS$ and the other being the image $\psi(\cS)$ of $\cS$ under the involution $\psi$.

\begin{figure}[!ht]
\includegraphics[width=.25\textwidth]{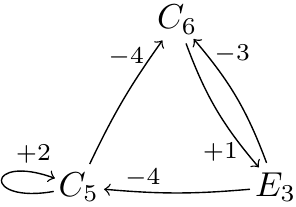}
\caption{The \emph{small graph} $\cS$.}
\label{smallgraph}
\end{figure}

Proposition \ref{propReductionToSmallgraph} below is the main technical result in the proof of Theorem \ref{TheoremReductionToSubshift}. The proof of Proposition \ref{propReductionToSmallgraph} is the subject of the next subsections \S \ref{SecReductionTo IntermediateGraphs},  \S \ref{Secblocks} and \S \ref{SecProofOfPropositionReductionToSmallGraph}.

\begin{proposition}
\label{propReductionToSmallgraph}
Consider $X_j\in\cO$ and $\alpha$ such that
$$
L(X_j,\alpha)<\eta_1.
$$
Then, eventually, the only elementary operations appearing in the path $(X_j,\alpha)$ are:
\begin{enumerate}
\item
either arrows of the small graph $\cS$, that belong to  the list: $\cF_1$, $\cF_2$, $\cF_3$, $\cF_{4C}$, $\cF_{4E}$;
\item
or arrows of the image $\psi(\cS)$ of the small graph, that is belong to the list: $\cF'_1$, $\cF'_2$, $\cF'_3$, $\cF'_{4C}$, $\cF'_{4E}$.
\end{enumerate}
\end{proposition}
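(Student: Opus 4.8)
The plan is to proceed in the two stages already announced in the text: first establish Proposition \ref{PropReductionToIntermediateGraph}, which confines the support of any path $(X_j,\alpha)$ with $L(X_j,\alpha)<\eta_3$ to the accessible graph $\cA$ and forces $a_n\le 6$ eventually, and then make the successive refinements to $a_n\le 5$ (Corollary \ref{CoroReductionTointermediateGraph}, under $L<\eta_2$) and finally to the arrows of the small graph $\cS$ (or its mirror image $\psi(\cS)$) under the hypothesis $L(X_j,\alpha)<\eta_1$. The mechanism in each step is the same contrapositive argument used by Cusick and Flahive: if some ``forbidden'' configuration occurred infinitely often along the path $(X_j,\alpha)$, then by Equation \eqref{eqFormulaLagrangeConstant} — the renormalized formula for $L(X_j,\alpha)$ as a $\limsup$ over the values $D(n,i,\alpha)/m^2(R\cdot g(a_1,\dots,a_{n-1},i)\cdot X_j)$ — one extracts along that subsequence a lower bound for $L(X_j,\alpha)$ which already exceeds $\eta_1$ (respectively $\eta_2$, $\eta_3$), contradicting the hypothesis. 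Concretely, each forbidden configuration is a short word of elementary operations; given such a word one knows precisely which vertex $Y_k\in\cO$ is reached (hence the multiplicity $m(Y_k)\in\{1,2\}$ in the denominator), and one knows which continued-fraction entries $a_{n-1},a_n,a_{n+1},\dots$ are constrained near that time, so $D(n,i,\alpha)$ is bounded below by a convergent continued fraction in those entries; the quadratic irrationals $\eta_1,\eta_2,\eta_3$ are exactly the threshold values coming from the worst admissible such word.

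The execution therefore splits into a finite, if somewhat laborious, enumeration. For Proposition \ref{PropReductionToIntermediateGraph} I would first check that a large entry $a_n\ge 7$ forces $D(n,a_n,\alpha)\ge a_{n+1}+\text{(positive)}\ge 7$, and since the multiplicity is at most $2$ this gives $L\ge 7\cdot 7/4 > \eta_3$; this kills all entries $\ge 7$. Next, with $a_n\le 6$ throughout, I would go vertex by vertex through $\cG$ and identify which vertices cannot lie in $\support(X_j,\alpha)$ without producing a value above $\eta_3$: the key point is that the multiplicity-$2$ vertices (the $C_i$) are precisely where excursions are cheap, so a path that visits a multiplicity-$1$ vertex at a Gauss time, or that has too large an entry near a multiplicity-$2$ vertex, is expensive; the surviving vertices are exactly those of $\cA$. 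The passage from $\cA$ to the intermediate graph $\cI$ is then purely formal (Lemma \ref{LemInducingIntermediateGraph}, already proved): the admissible elementary operations of length $\le 6$ are the $30$ listed arrows. For the final step I would invoke the involution $\psi$ (Remark \ref{involutionrespectsmultiplicities}: $\psi$ preserves multiplicities and swaps $+a\leftrightarrow -a$) to reduce the bookkeeping by a factor of two, note that once a path avoids $F_0$ eventually it is trapped in $\cS$ or in $\psi(\cS)$, and then show that any of the ``$\cG$-type'' arrows — the ones passing through or near $F_0$, or the ones with entry $5$ or $6$ — appearing infinitely often forces $L(X_j,\alpha)\ge\eta_1$; the value $\eta_1=7\cdot([1,4,2,\overline{1,5}]+5+[1,5,1,\overline{1,5}])/4$ is visibly the contribution of a Gauss approximation at a multiplicity-$2$ vertex ($m^2=4$ in the denominator) with an entry $5$ and surrounding word $1,4,2,\dots$ on one side and $1,5,1,\dots$ on the other, which is exactly the cheapest ``bad'' excursion one can still make while leaving $\cS$.

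The main obstacle I expect is not any single inequality but the sheer combinatorial case analysis: one must be certain that the list of $\cG$-type configurations to be excluded is exhaustive — i.e. that every way of leaving the subgraph $\cI'$, or of using an arrow of $\cI'$ that is not in $\cS\cup\psi(\cS)$, either already produces a value $\ge\eta_1$ or is itself impossible because it would force a visit to an earlier-excluded vertex. This is where the compatibility-with-concatenation argument of Lemma \ref{LemInducingIntermediateGraph} has to be pushed further: knowing the in- and out-degrees at each vertex of $\cI$ drastically limits which two-arrow words are legal, and one shows that every legal word that uses a $\cG$-arrow twice, or a $\cG$-arrow and then cannot return to $\cS$, has a sub-block whose associated $D(n,i,\alpha)$-value exceeds $\eta_1/7$ times the relevant $m^2$. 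I would organize this as a short sequence of claims, one per ``exit route'' from $\cS$ through $C_4$ and $F_0$ and through the entry-$5$/entry-$6$ arrows, each dispatched by exhibiting the explicit continued fraction lower bound; the numerics $\eta_1 = 11.655309\ldots > \phi_\infty$ are the sanity check that the chosen thresholds are consistent. Finally, the symmetric statement (conclusion (2) with primes) is immediate from conclusion (1) by applying $\psi$, since $\psi$ commutes with the renormalization up to the identifications above and leaves $L(X_j,\alpha)$ and the threshold $\eta_1$ unchanged.
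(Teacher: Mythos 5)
Your plan coincides with the paper's own proof: the same chain of thresholds $\eta_3>\eta_2>\eta_1$, the same block-by-block lower bounds extracted from Equation \eqref{eqFormulaLagrangeConstant} together with Lemma \ref{LemmaMaxMinCantorset}, the same use of the involution $\psi$ to halve the casework, and you have correctly decoded $\eta_1$ as the value of the cheapest surviving excursion through $E_3\to C_4\to F_0\to C_6$. The combinatorial enumeration you defer is exactly what the paper carries out in Lemma \ref{LemCutFzero} (excluding $F_0$ and hence $C_4$), Corollary \ref{CoroDisconnettingIntermediateGraph}, and Lemma \ref{lemCutEntriesAbove4inSmallGraph} (excluding the last remaining arrow $\cG_{C5}$, where the relevant bound comes from a Farey approximation at the multiplicity-one vertex $C_4$ rather than a Gauss approximation), so nothing in your outline would fail.
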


Lemma \ref{LemmaMaxMinCantorset} below is a classical result on continued fraction playing a relevant role in several parts of this section. The proof is left to the reader.

\begin{lemma}
\label{LemmaMaxMinCantorset}
Fix a positive integer $M$. Consider $m\in\NN$ and positive integers $b_1,\dots,b_m$. Let $K$ be the Cantor set of those real numbers $x$ of the form
$$
x=
[b_1,\dots,b_m,a_1,a_2,\dots]
\textrm{ where }
1\leq a_n\leq M
\textrm{ for any }
n,
$$
that is the set of those $x$ whose continued fraction expansion starts with the prescribed entries $b_1,\dots,b_m$ and then all the other entries $a_n$ satisfy $1\leq a_n\leq M$. Then we have
\begin{eqnarray*}
&&
\min K=[b_1,\dots,b_m,\overline{M,1}]
\textrm{ and }
\max K=[b_1,\dots,b_m,\overline{1,M}]
\textrm{ if }
m
\textrm{ is even }
\\
&&
\min K=[b_1,\dots,b_m,\overline{1,M}]
\textrm{ and }
\max K=[b_1,\dots,b_m,\overline{M,1}]
\textrm{ if }
m
\textrm{ is odd. }
\\
\end{eqnarray*}
In particular, for the degenerate case $m=0$, when there is no prescribed beginning for the continued fraction expansion of the elements of $K$, we have
$$
[\overline{M,1}]=\min K
\textrm{ and }
[\overline{1,M}]=\max K.
$$
\end{lemma}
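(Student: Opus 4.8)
The statement to prove is Lemma \ref{LemmaMaxMinCantorset}, a purely elementary fact about continued fractions. The plan is to reduce everything to the single monotonicity principle: if $\alpha = [c_1, c_2, c_3, \dots]$ and $\beta = [d_1, d_2, d_3, \dots]$ agree in the first $k-1$ entries, then the order of $\alpha$ and $\beta$ is governed by the $k$-th entry, with the direction of the inequality alternating with the parity of $k$ --- specifically, among strings with a fixed prefix of length $k-1$, increasing $c_k$ \emph{decreases} the value when $k$ is odd and \emph{increases} it when $k$ is even (here I use the convention that $[c_1,c_2,\dots]$ starts with $1/(c_1 + \cdots)$, matching the paper). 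This is the standard fact that the convergents $p_k/q_k$ alternate above and below the limit. I would first state this monotonicity lemma cleanly (or cite it as ``well known''), taking care to pin down the parity convention so it matches the indexing in the statement, where the prescribed block $b_1,\dots,b_m$ is followed by free entries $a_1, a_2, \dots$ (so $a_n$ sits in position $m+n$).

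The core argument is then a direct optimization. Fix the prefix $b_1,\dots,b_m$; all elements of $K$ share it, so by the monotonicity principle the value depends on the tail $(a_1, a_2, \dots) \in \{1,\dots,M\}^{\NN}$ only. To maximize, one wants each $a_n$ as large as possible (equal to $M$) when position $m+n$ has the parity that makes ``larger entry $\Rightarrow$ larger value'', and as small as possible (equal to $1$) when the parity is reversed; since the parity of $m+n$ alternates with $n$, this forces the optimal tail to be an alternating string $M,1,M,1,\dots$ or $1,M,1,M,\dots$ depending on the parity of $m$, which is exactly $\overline{M,1}$ or $\overline{1,M}$ as in the statement. Minimization is the mirror image. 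The one subtlety worth spelling out: choosing $a_n$ greedily position-by-position really does produce the global optimum, because the effect of the entry in position $m+n$ on the overall value has a sign depending only on the parity of $m+n$ (not on the later entries) --- this is again immediate from the alternation of convergents, since changing $a_n$ while keeping $a_1,\dots,a_{n-1}$ fixed moves the value monotonically in one direction regardless of the continuation. I would phrase this as: apply the monotonicity lemma successively to positions $m+1, m+2, \dots$, at each stage choosing the extreme admissible value dictated by the parity, and note that these choices are mutually consistent and jointly attain the supremum/infimum (which is attained, not merely approached, since the extremal tail is itself an admissible periodic sequence).

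The degenerate case $m=0$ is just the specialization with empty prefix, where the parity of position $n$ is the parity of $n$ itself, giving $[\overline{M,1}]$ for the min and $[\overline{1,M}]$ for the max; no separate argument is needed. I do not anticipate a genuine obstacle here --- the lemma is routine --- but the place to be careful is bookkeeping of parities: the statement's indexing ($m$ prescribed entries, then $a_n$) shifts everything by $m$, so the ``if $m$ is even / if $m$ is odd'' dichotomy in the conclusion must be tracked against whichever convention one adopts for ``larger entry in an odd vs.\ even slot''. I would fix notation once at the start and then let the alternation do all the work.
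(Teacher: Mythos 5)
Your proof is correct, and it is exactly the standard argument the authors had in mind: the paper explicitly leaves this lemma's proof to the reader, so there is no written proof to compare against. Your monotonicity principle (larger entry in an odd slot decreases the value, in an even slot increases it, with the comparison at the first differing position independent of the continuation), the resulting greedy choice of the alternating tail, and the parity bookkeeping shifted by $m$ all check out against the statement, including the $m=0$ case.
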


\subsection{Reduction to the intermediate graph $\cI$}
\label{SecReductionTo IntermediateGraphs}

Observe that
$$
\eta_3
<
7\cdot\frac{7+2[\overline{7,1}]}{4}
=12,693741\pm10^{-6}.
$$

\begin{lemma}
\label{lemCutEntriesAbove6}
Consider $X_j\in\cO$ and $\alpha=[a_1,a_2,\dots]$ irrational such that
$$
L(X_j,\alpha)<
7\cdot\frac{7+2[\overline{7,1}]}{4}.
$$
Then we eventually have $1\leq a_n\leq 6$.
\end{lemma}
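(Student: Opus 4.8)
The statement asserts that if the Lagrange constant $L(X_j,\alpha)$ falls below the explicit threshold $7\cdot\frac{7+2[\overline{7,1}]}{4}$, then all but finitely many partial quotients $a_n$ of $\alpha$ are at most $6$. The natural strategy is contrapositive: suppose infinitely many $a_n\ge 7$, and produce from each such index a term in the $\limsup$ defining $L(X_j,\alpha)$ (via the formula of Corollary \ref{cor:renormalizedformulafororbitB7}) that exceeds the threshold, contradicting the hypothesis. So I would fix $n$ with $a_n\ge 7$ and examine the contribution coming from the Gauss approximation at level $n$, i.e.\ the term with $i=a_n$, namely
$$
7\cdot\frac{D(n,a_n,\alpha)}{m^2\big(R\cdot g(a_1,\dots,a_{n-1},a_n)\cdot X_j\big)}.
$$

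First I would bound the numerator $D(n,a_n,\alpha)=[a_n,\dots,a_1]+a_{n+1}+[a_{n+2},\dots]$ from below. Since $a_n\ge 7$ we have $a_{n+1}\ge 1$ and the two continued-fraction tails are each positive; more precisely, the middle term $a_{n+1}$ together with one of the tails gives at least $1+[a_{n+2},a_{n+3},\dots]$, and the reversed tail $[a_n,\dots,a_1]$ is at least $[a_n]=1/a_n$ — but that is too weak. The right move is: among all indices $n$ with $a_n\ge 7$, there are infinitely many, so I may as well look at the one maximizing things; better still, use that $[a_n,\dots,a_1]\ge [7,1,M,1,M,\dots]$-type bounds. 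Actually the cleanest route is to apply Lemma \ref{LemmaMaxMinCantorset}: if we only knew $a_n\ge 7$ for one $n$ but the surrounding entries were arbitrary, the worst case for $[a_n,\dots,a_1]+[a_{n+2},\dots]$ is when the other entries are large, making both tails small — but then $a_{n+1}$ itself could be large, helping the numerator. The delicate point, and what I expect to be the main obstacle, is handling the denominator: $m^2(\cdot)\in\{1,4\}$, and a factor of $4$ in the denominator could kill the bound. So I must check that when $a_n\ge 7$, the relevant vertex $R\cdot g(a_1,\dots,a_{n-1},a_n)\cdot X_j$ cannot have multiplicity $2$ — or, if it can, that the numerator still compensates. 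This is where the structure of the orbit graph $\cG$ (Figure \ref{fig:orbit-graph}) enters: one reads off which vertices are reachable after $T^{a_n}$ with $a_n\ge 7$ and checks their $m$-values.

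Concretely, I would argue as follows. Only the cusp $C$ has $m=2$; its width is $7$, so a segment $T^{a}$ with $a\ge 7$ starting inside cusp $C$ must exit it (or wrap completely). One then checks on the graph that for $a_n\ge 7$ the vertex immediately before the $R$-arrow either lies outside cusp $C$ (so $m=1$), or if it lies in $C$ the contribution is still large because $a_n\ge 7$ forces a big numerator: indeed with $m^2=4$ and $a_{n+1}\ge 1$ one gets at least $7\cdot\frac{[a_n,\dots,a_1]+1+[a_{n+2},\dots]}{4}$, and using $a_n\ge 7$ together with Lemma \ref{LemmaMaxMinCantorset} to bound the two tails below by their extremal values one obtains at least $7\cdot\frac{7+2[\overline{7,1}]}{4}$-ish. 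In the case $m=1$ the bound is immediate and much stronger. Either way one gets $L(X_j,\alpha)\ge 7\cdot\frac{7+2[\overline{7,1}]}{4}$, contradicting the hypothesis, so only finitely many $a_n$ can be $\ge 7$.

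The remaining routine work is the careful optimization of the continued-fraction tails via Lemma \ref{LemmaMaxMinCantorset} and the finite case-check on the graph — I would organize the latter as: list the $T$-arrows entering cusp $C$, note $w(C)=7$, and conclude that any block $T^{a}$ with $a\ge 7$ either lands outside $C$ at its endpoint or, if inside $C$, is preceded within the same $T^{a}$-run by an exit-and-reentry, which (inspecting the graph) does not happen for the single cusp $C$; hence the endpoint vertex has $m=1$ whenever $a_n\ge 7$, giving the clean bound $L(X_j,\alpha)\ge 7\big([a_n,\dots,a_1]+a_{n+1}+[a_{n+2},\dots]\big)\ge 7\cdot 7 > 7\cdot\frac{7+2[\overline{7,1}]}{4}$ with room to spare. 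I expect the proof to reduce to this graph inspection plus the invocation of Corollary \ref{cor:renormalizedformulafororbitB7} and Lemma \ref{LemmaMaxMinCantorset}.
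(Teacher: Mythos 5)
Your overall strategy (contrapositive, via Corollary \ref{cor:renormalizedformulafororbitB7} and Lemma \ref{LemmaMaxMinCantorset}) is the right one, but the execution has two genuine errors. First, an index shift: in $D(n,a_n,\alpha)=[a_n,\dots,a_1]+a_{n+1}+[a_{n+2},\dots]$ the large integer that appears \emph{additively} is $a_{n+1}$, not $a_n$. If you fix $n$ with $a_n\ge 7$ and examine the Gauss term at level $n$, as you propose, then $a_n\ge 7$ only makes the reversed tail $[a_n,\dots,a_1]\le 1/7$ \emph{small}, while the middle term $a_{n+1}$ is only known to be $\ge 1$; the numerator can then be smaller than $2$, and no contradiction with the threshold $\approx 12.69$ follows. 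You must instead use the Gauss term one level earlier, whose numerator is $[a_{n-1},\dots,a_1]+a_n+[a_{n+1},\dots]\ge 7+(\text{tails})$. Moreover, to bound the two tails from below by $[\overline{7,1}]$ via Lemma \ref{LemmaMaxMinCantorset} one must already know that all entries are eventually $\le 7$; this is why the paper runs the argument in two passes, first killing $a_n\ge 8$ (there $D>8$ gives $7D/4>14$, above the threshold even with denominator $4$), and only then treating the remaining entries equal to $7$ with the Cantor-set bound $D\ge 7+2[\overline{7,1}]$.

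Second, your proposed graph inspection aiming to show that the multiplicity is $1$ whenever $a_n\ge 7$ is both unnecessary and false. The cusp $C$ has width exactly $7$, so $T^{\pm 7}$ fixes each $C_k$ and a block $T^{\mp a_n}$ with $a_n=7$ starting in cusp $C$ ends in cusp $C$; moreover $R$ can send cusp $C$ back into cusp $C$ (from the arrow $\cF_2$, which says $T^{2}R\cdot C_5=C_5$, one reads $R\cdot C_5=T^{-2}\cdot C_5=C_3$, and $m(C_3)=2$), so the denominator $m^2=4$ genuinely occurs for such terms. The paper never tries to exclude this: it simply uses the worst case $m^2\le 4$ throughout, which is exactly why the threshold in the statement carries the factor $1/4$. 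Your fallback estimate for the $m^2=4$ case (bounding the numerator below by $[a_n,\dots,a_1]+1+[a_{n+2},\dots]$ and claiming this reaches $7+2[\overline{7,1}]$) fails for the index-shift reason above, so as written the proposal does not close.
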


\begin{proof}
Fix any $\delta>0$. If the continued expansion of $\alpha$ contains infinitely many $a_n\geq 8$, then for any such $n$ which is big enough and for $i=a_n$ we have
$
D(n,i,\alpha)=D(n,a_n,\alpha)>8
$
and
$$
L(X_j,\alpha)+\delta\geq
7\cdot\frac{D(n,a_n,\alpha)}
{m^2\big(R\cdot g(a_1,\dots,a_{n})\cdot X_j\big)}
\geq
7\cdot\frac{D(n,a_n,\alpha)}{4}
>14.
$$
Since $\delta$ is arbitrary, we get $L(X_j,\alpha)\geq 14$, which is absurd. Now assume that the continued expansion of $\alpha$ contains infinitely many $a_n\geq 7$. For any such $n$ which is big enough, and for $i=a_n$, Lemma \ref{LemmaMaxMinCantorset} implies
$
D(n,a_n,\alpha)> 7+2[\overline{7,1}]
$
and thus
$$
L(X_j,\alpha)+\delta\geq
\frac{D(n,a_n,\alpha)}
{m^2\big(R\cdot g(a_1,\dots,a_{n})\cdot X_j\big)}\geq
7\cdot\frac{7+2[\overline{7,1}]}{4}.
$$
Since $\delta$ is arbitrary, it follows
$
L(X_j,\alpha)\geq7(7+2[\overline{7,1}])/4
$,
which is absurd.
\end{proof}

\begin{lemma}
\label{lemObstructionForMultiplicity1}
Consider $X_j\in\cO$ and $\alpha=[a_1,a_2,\dots]$ irrational such that $L(X_j,\alpha)<\eta_3$. Assume that there exists $Y_k\in \support(X_j,\alpha)$ with $m^2(R\cdot Y_k)=1$. Then for any $n$ with
$
g(a_1,\dots,a_n)\cdot X_j=Y_k
$
we eventually have
\begin{eqnarray*}
&&
2\leq a_n\leq 6
\textrm{ , }
a_{n+1}=1
\textrm{ , }
2\leq a_{n+2}\leq 6
\\
&&
m^2\big(
R g(a_1,\dots,a_{n-1})\cdot X_j
\big)
=
m^2\big(
R g(a_1,\dots,a_{n+1})\cdot X_j
\big)
=4.
\end{eqnarray*}
\end{lemma}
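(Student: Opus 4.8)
The statement to prove is Lemma \ref{lemObstructionForMultiplicity1}, which asserts that whenever a path $(X_j,\alpha)$ with $L(X_j,\alpha)<\eta_3$ passes at a Gauss-approximation time through an element $Y_k$ in the accessible graph with $m^2(R\cdot Y_k)=1$ (i.e.\ $R\cdot Y_k$ is not one of the $C_i$), then the local combinatorics of the continued fraction and of the surrounding path is forced: the relevant entries lie in $\{2,\dots,6\}$, the entry $a_{n+1}$ between two such ``bad multiplicity'' passages equals $1$, and the multiplicities immediately before and after are $4$. I would organize the proof as a two-step argument: first identify which vertices $Y_k$ of $\cA$ have $m^2(R\cdot Y_k)=1$, then exploit the structure of the intermediate graph $\cI$ (already classified in Lemma \ref{LemInducingIntermediateGraph}) together with the bound $L(X_j,\alpha)<\eta_3$ to rule out all other configurations.

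\textbf{Step 1: Locate the obstruction vertices.} Using the function $m:\cO\to\{1,2\}$ and the $R$-edges listed for $\cA$, I would first determine for which vertices $Y_k$ of the accessible graph one has $m(R\cdot Y_k)=1$. Since $R$ maps $E_2\mapsto C_6$, $E_3\mapsto C_2$ (and $E_3\mapsto C_5$), $C_6\mapsto E_2$, $C_2\mapsto E_3$, etc., the condition $m^2(R\cdot Y_k)=1$ singles out exactly the vertices whose $R$-image is \emph{not} a $C_i$; inspecting the graph, these are $Y_k\in\{C_6,C_2\}$ (whose $R$-images $E_2,E_3$ have multiplicity $1$), while $E_2,E_3,C_3,C_4,C_5$ all have $R$-image equal to some $C_i$. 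So the hypothesis $m^2(R\cdot Y_k)=1$ forces $Y_k=C_2$ or $Y_k=C_6$, i.e.\ $g(a_1,\dots,a_n)\cdot X_j \in \{C_2,C_6\}$.

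\textbf{Step 2: Force the local combinatorics.} Now fix $n$ with $g(a_1,\dots,a_n)\cdot X_j=Y_k\in\{C_2,C_6\}$; by the involution $\psi$ it suffices to treat, say, $Y_k=C_6$. Since the support lies in $\cA$ (by Lemma \ref{lemCutEntriesAbove6} and, for entries $\le 6$, the already-established reduction to $\cA$ — cf.\ Proposition \ref{PropReductionToIntermediateGraph}), the next elementary operation out of $C_6$ must be an admissible arrow of $\cI$ starting at $C_6$: inspecting the list, the \emph{only} such arrow is $\cF_1: C_6 \xrightarrow{1+} E_3$, so $a_{n+1}=1$ and $g(a_1,\dots,a_{n+1})\cdot X_j=E_3$, whose $R$-image is a $C_i$, giving $m^2(R\cdot g(a_1,\dots,a_{n+1})\cdot X_j)=4$. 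Similarly the arrow of $\cI$ arriving at $C_6$ must be one of $\cF_3: E_3\xrightarrow{3-}C_6$, $\cF_{C4}: C_5\xrightarrow{4-}C_6$, $\cG_{5C}: F_0\xrightarrow{5-}C_6$, $\cG_{C6}': C_3\xrightarrow{6-}C_6$; in each case the previous Gauss vertex $g(a_1,\dots,a_{n-1})\cdot X_j$ is $E_3$, $C_5$, $F_0$, or $C_3$. Three of these four have $R$-image a $C_i$, hence multiplicity $4$; the only one that would give multiplicity $1$ is $F_0$ (since $R\cdot F_0$ is not a $C_i$), so I must rule out the arrow $\cG_{5C}$. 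This is where the quantitative bound enters: if $g(a_1,\dots,a_{n-1})\cdot X_j=F_0$ with the arrow $\cG_{5C}$, then $a_n=5$ and $Y_k=C_6$ has $m^2(R\cdot Y_k)=1$, so the term $D(n,a_n,\alpha)/m^2(R\cdot g(a_1,\dots,a_n)\cdot X_j)$ has $D(n,5,\alpha)\ge 5 + 2[\overline{5,1}]$ (or a sharper bound using the forced neighbours) over a denominator of $1$, forcing $L(X_j,\alpha)\ge 7\cdot(5+\cdots)$, which I would compute to exceed $\eta_3$ — a contradiction. Hence $m^2(R\cdot g(a_1,\dots,a_{n-1})\cdot X_j)=4$, and reading off the incoming arrows of $\cI$ at $C_6$ that remain, the entry $a_n$ lies in $\{3,4,6\}\subset\{2,\dots,6\}$. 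Finally, applying the same analysis at $E_3=g(a_1,\dots,a_{n+1})\cdot X_j$: its outgoing arrows in $\cI$ are $\cF_3,\cF_{C4}$-type, $\cG_{E5},\cG_{E6}$, with exponents $3,4,5,6$, so $a_{n+2}\in\{3,4,5,6\}\subset\{2,\dots,6\}$, completing the claim. The case $Y_k=C_2$ is handled identically via $\psi$.

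\textbf{Main obstacle.} The bulk of the argument is a finite graph inspection, so the real work — and the only genuinely non-automatic step — is the numerical verification in Step 2 that the configuration passing through $F_0$ with a ``$5-$'' arrow into $C_6$ (and its $\psi$-image) produces a value $\ge \eta_3$. This requires choosing the right lower bound for $D(n,a_n,\alpha)$ using Lemma \ref{LemmaMaxMinCantorset} together with whatever neighbouring entries are already forced (one may need to also use that the $F_0\xrightarrow{5-}C_6$ arrow constrains what precedes $F_0$, tightening the continued-fraction tails on both sides of $a_n$), and then checking the resulting explicit quadratic-irrational value against $\eta_3 = 11.755835\pm10^{-6}$. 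I would carry this out by matching it to the defining expression $\eta_3 = 7\cdot(1+[1,\overline{1,6}]+[\overline{6,1}])$ and verifying the relevant inequality of continued fractions term by term; the displayed remark that $\eta_3 < 7\cdot(7+2[\overline{7,1}])/4$ suggests the authors have arranged $\eta_3$ precisely so that such ``one exponent-$5$-over-denominator-$1$'' terms are just barely excluded, so the inequality should be clean but must be tracked carefully.
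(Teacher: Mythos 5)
There is a genuine gap, and it is a structural one: your argument is circular. You assume from the outset that $Y_k$ is a vertex of the accessible graph $\cA$ and that the arrows entering and leaving $Y_k$ are among the classified arrows of the intermediate graph $\cI$ (you even cite Proposition \ref{PropReductionToIntermediateGraph} for this). But the containment $\support(X_j,\alpha)\subset\cA$ is precisely the \emph{conclusion} of Proposition \ref{PropReductionToIntermediateGraph}, whose proof in the paper relies on Lemma \ref{lemObstructionForMultiplicity1}. The lemma must therefore be proved for an arbitrary $Y_k$ in the full $36$-element orbit $\cO$ with $m^2(R\cdot Y_k)=1$ — and there are many such elements outside $\cA$ (the Proposition's proof explicitly runs through $E_4, A_6, B_5, C_4, B_3, E_1, A_4,\dots$). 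Your Step 1, which concludes $Y_k\in\{C_2,C_6\}$, is thus both unjustified and too narrow for the lemma to serve its purpose downstream. The paper's proof avoids the graph entirely: it is a direct continued-fraction estimate. Since the denominator $m^2(R\cdot g(a_1,\dots,a_n)\cdot X_j)$ equals $1$, the term $D(n,a_n,\alpha)>a_{n+1}$ forces $a_{n+1}=1$ (else $L\geq 14>\eta_3$); the bound $a_n\leq 6$ comes from Lemma \ref{lemCutEntriesAbove6}; if $a_n=1$ or $a_{n+2}=1$ then Lemma \ref{LemmaMaxMinCantorset} gives $D(n,a_n,\alpha)\geq 1+[1,\overline{1,6}]+[\overline{6,1}]=\eta_3/7$, a contradiction; and once $a_n,a_{n+2}\geq 2$, the terms $D(n\mp1,\cdot,\alpha)>2$ force the neighbouring multiplicities to be $4$, again on pain of $L\geq 14$.

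A secondary problem is the step you single out as the "main obstacle": ruling out the arrow $F_0\,\substack{5-\\ \longrightarrow}\,C_6$. Your estimate there misreads the formula: in $D(n,a_n,\alpha)=[a_n,\dots,a_1]+a_{n+1}+[a_{n+2},\dots]$ the integer summand is $a_{n+1}$, not $a_n$, so having $a_n=5$ with denominator $1$ does \emph{not} give $D(n,a_n,\alpha)\geq 5+\cdots$; that term is only slightly larger than $1+a_{n+1}$. To exclude $a_n=5$ arriving at a multiplicity-one vertex you would have to look at the Gauss term at index $n-1$, whose denominator is $m^2(R\cdot g(a_1,\dots,a_{n-1})\cdot X_j)$ — exactly the quantity the lemma is trying to determine. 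In the paper's argument this whole numerical excursion is unnecessary, because nothing about the specific identity of the vertices (or the value $a_n=5$ versus $3,4,6$) is needed: only the dichotomy $m^2\in\{1,4\}$ and the three elementary inequalities above.
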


\begin{proof}
If $n$ is as in the statement, since
$
m^2\big(Rg(a_1,\dots,a_n)\cdot X_j\big)=1
$,
then we have $a_{n+1}=1$, otherwise Formula \eqref{eqFormulaLagrangeConstant} would give
$$
L(X_j,\alpha)+\delta\geq
7\cdot\frac{D(n,a_n,\alpha)}
{m^2\big(R\cdot g(a_1,\dots,a_{n})\cdot X_j\big)}>
7a_{n+1}\geq14,
$$
where $\delta>0$ is an arbitrarily small positive constant, which is absurd. According to Lemma \ref{lemCutEntriesAbove6} we have $1\leq a_n\leq 6$. Moreover, if either $a_n=1$ or $a_{n+2}=1$ then Lemma \ref{LemmaMaxMinCantorset} implies
$$
L(X_j,\alpha)+\delta\geq
7\cdot
\frac{D(n,a_n,\alpha)}
{m^2\big(R\cdot g(a_1,\dots,a_{n})\cdot X_j\big)}>
7D(n,1,\alpha)\geq
7\big(1+[1,\overline{1,6}]+[\overline{6,1}]\big)
$$
for any arbitrarily small positive constant $\delta$ and any $n$ big enough, which is absurd. Since $a_n\geq 2$ and $a_{n+2}\geq 2$, it follows
$
m^2\big(
R g(a_1,\dots,a_{n-1})\cdot X_j
\big)
=
m^2\big(
R g(a_1,\dots,a_{n+1})\cdot X_j
\big)
=4
$,
otherwise, arguing as in the beginning of the proof one gets $L(X_j,\alpha)\geq 14$, which is absurd.
\end{proof}

\begin{proposition}
\label{PropReductionToIntermediateGraph}
Consider $X_j\in\cO$ and $\alpha=[a_1,a_2,\dots]$ irrational such that
$$
L(X_j,\alpha)<\eta_3.
$$
Then $\support(X_j,\alpha)$ is contained in the accessible graph $\cA$,  we eventually have $1\leq a_n\leq 6$ and the path $(X_j,\alpha)$ is eventually decomposed into elementary operations which are arrows of the intermediate graph $\cI$. 
\end{proposition}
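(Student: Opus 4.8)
The Proposition has three assertions, and the bound on the partial quotients, assertion (2), is immediate: by the inequality displayed just before Lemma \ref{lemCutEntriesAbove6} we have $\eta_3<7\cdot\frac{7+2[\overline{7,1}]}{4}$, so $L(X_j,\alpha)<\eta_3$ implies the hypothesis of that lemma, whence $1\le a_n\le 6$ eventually. Assertion (3) is then a formal consequence of (1) and (2): once every Gauss vertex $g(a_1,\dots,a_n)\cdot X_j$ lies, for $n$ large, in the vertex set of $\cA$, the path step from the Gauss vertex of level $n-1$ to that of level $n$ is an $R$ followed by a run $T^{\pm a_n}$ with $1\le a_n\le 6$; it begins and ends in $\cA$, and the Farey vertices it visits are joined by $T$-arrows lying over $\cA$ (possibly passing through $C_0$ or $C_1$, but never at a Gauss time, in accordance with the construction of $\cA$ from $\cG$). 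Such a step is therefore an elementary operation admissible in $\cA$, and by Lemma \ref{LemInducingIntermediateGraph} it is one of the thirty arrows of $\cI$; concatenating gives (3). So the whole content is assertion (1).

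To prove (1), suppose for contradiction that $L(X_j,\alpha)<\eta_3$ yet $g(a_1,\dots,a_n)\cdot X_j$ fails to be a vertex of $\cA$ for infinitely many $n$. As $\cO$ is finite, some fixed vertex $Y_k\notin\cA$ occurs this way for infinitely many $n$, i.e.\ $Y_k\in\support(X_j,\alpha)$. We split according to $m^2(R\cdot Y_k)$, which equals $4$ precisely when $R\cdot Y_k\in\{C_0,\dots,C_6\}$ and $1$ otherwise. First suppose $m^2(R\cdot Y_k)=1$. Then Lemma \ref{lemObstructionForMultiplicity1} applies and forces, for every large $n$ with $g(a_1,\dots,a_n)\cdot X_j=Y_k$, the local word $2\le a_n\le 6$, $a_{n+1}=1$, $2\le a_{n+2}\le 6$, together with $m^2\big(R\,g(a_1,\dots,a_{n-1})\cdot X_j\big)=m^2\big(R\,g(a_1,\dots,a_{n+1})\cdot X_j\big)=4$, i.e.\ the neighbouring Gauss vertices lie in $R\cdot\{C_0,\dots,C_6\}$. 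Confronting these rigid constraints with the graph $\cG$ — for each vertex $Y$ with $m^2(R\cdot Y)=1$, listing the vertices mapping into $Y$ by a $T^{\pm a}R$ with $2\le a\le 6$ from a vertex of $R\cdot\{C_0,\dots,C_6\}$, and those leaving $Y$ by a single $T^{\pm1}R$ — and lower bounding the corresponding terms $7\,D(n',i,\alpha)/m^2(\cdots)$ of Corollary \ref{cor:renormalizedformulafororbitB7} by means of Lemma \ref{LemmaMaxMinCantorset}, exactly as in the computations of Lemma \ref{lemMinimumAndOtherExample}, one checks that every such $Y$ outside $\cA$ (found among the vertices of cusps $A,B,D,G,H$ and the vertices $E_0,E_1,E_4$) produces, within a bounded window of levels, one of these terms exceeding $\eta_3$. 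Since $Y_k$ recurs infinitely often, the $\limsup$ in Corollary \ref{cor:renormalizedformulafororbitB7} is then $\ge\eta_3$, a contradiction.

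In the remaining subcase $m^2(R\cdot Y_k)=4$ with $Y_k\notin\cA$, the vertex $Y_k$ belongs to the short list $R\cdot\{C_0,\dots,C_6\}$ with the vertices of $\cA$ removed. For each vertex on this short list a direct estimate finishes the argument: the Gauss term $7\,D(n,a_n,\alpha)/4$, combined — once more through Lemma \ref{LemmaMaxMinCantorset} — with the terms of the adjacent levels, or with the constraints obtained by applying the previous subcase to a low-multiplicity neighbour, exceeds $\eta_3$; this is the mechanism already visible in the evaluation $L(C_3,[\overline{5,2}])=14\cdot[1,\overline{5,2}]\approx 11.83>\eta_3$ of Lemma \ref{lemMinimumAndOtherExample}. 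Again the $\limsup$ is $\ge\eta_3$, a contradiction. Therefore $\support(X_j,\alpha)$ is contained in $\cA$, which is assertion (1); with (2) and (3) this proves the Proposition.

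The crux is the finite verification in the first subcase. A single Gauss term contributes only about $7(1+2[\overline{6,1}])\approx 9.3$ when the relevant multiplicity is $1$, well below $\eta_3\approx 11.76$, so no single level on its own rules out a low-multiplicity exit from $\cA$; it is Lemma \ref{lemObstructionForMultiplicity1} that rescues the argument, fixing a whole block of the continued fraction together with the multiplicities of the adjacent Gauss vertices, so that a run of consecutive terms of the formula can be bounded from below simultaneously. Organising this efficiently — essentially a guided walk of $\cG$ outward from the eight vertices of $\cA$, checking that each exit is eventually penalised by a term above $\eta_3$ — is the only substantial labour in the proof.
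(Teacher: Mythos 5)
Your overall architecture matches the paper's: assertion (2) comes from Lemma \ref{lemCutEntriesAbove6}, assertion (3) from Lemma \ref{LemInducingIntermediateGraph} once (1) and (2) hold, and assertion (1) is attacked by splitting on $m^2(R\cdot Y_k)$ and exploiting the rigidity forced by Lemma \ref{lemObstructionForMultiplicity1}. Your treatment of $A_5$ and $B_4$ via ``the constraints obtained by applying the previous subcase to a low-multiplicity neighbour'' is exactly the paper's mechanism: the Gauss vertex preceding $B_4$ is of the form $R\cdot B_i$, satisfies $m^2(R\cdot R\cdot B_i)=m^2(B_i)=1$, so the lemma forces the intervening entry to be $1$, pins the neighbour down to $E_1$ or $A_6$, and a second application of the lemma gives a multiplicity contradiction.

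The gap is that the decisive step --- the finite verification that every vertex outside $\cA$ violates these constraints --- is asserted (``one checks that every such $Y$ outside $\cA$ produces \dots\ a term exceeding $\eta_3$'') rather than performed, and in a proposition whose entire content is that case check, the check \emph{is} the proof. The paper carries it out: for $m^2(R\cdot Y_k)=1$ and $n$ even, the condition $m^2(RTR\cdot Y_k)=4$ already restricts $Y_k$ to $\{E_4,A_6,E_3,F_0,B_5,C_6,C_4\}$, and the further condition $m^2(T^{a_n}\cdot Y_k)=4$ coming from the preceding Gauss vertex leaves only $C_4$ and $C_6$; the odd case leaves only $C_4$ and $C_2$. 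Note that no continued-fraction estimate beyond those already packaged inside Lemma \ref{lemObstructionForMultiplicity1} is needed here --- the residual check is purely a multiplicity computation on $\cG$ --- so your plan to re-derive lower bounds ``as in Lemma \ref{lemMinimumAndOtherExample}'' is more work than necessary, and the cited value $L(C_3,[\overline{5,2}])$ concerns a vertex that lies in $\cA$, so it does not illustrate an excluded exit. Finally, your enumeration of the multiplicity-one vertices outside $\cA$ (``cusps $A,B,D,G,H$ and $E_0,E_1,E_4$'') omits $C_0$ and $C_1$, which also satisfy $m^2(R\cdot C_j)=1$ and must be excluded from the support; they are excluded, since they occur in neither of the two solution lists above, but a complete proof has to say so.
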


\begin{proof}
Let $Y_k\in\support(X_j,\alpha)$ with $m^2(R\cdot Y_k)=1$. Consider $n$ with
$
g(a_1,\dots,a_n)\cdot X_j=Y_k
$.
Assume that $n$ is even. According to Lemma \ref{lemObstructionForMultiplicity1} we have $a_{n+1}=1$ and therefore
$$
TR\cdot Y_k=g(a_1,\dots,a_{n+1})\cdot X_j.
$$
The last equality, and again Lemma \ref{lemObstructionForMultiplicity1} imply $m^2(RTR\cdot Y_k)=4$, which has for solutions
$$
TR\cdot Y_k=A_5,B_4,C_3,C_5,E_2,E_3,F_0,
$$
corresponding respectively to
$
Y_k=E_4,A_6,E_3,F_0,B_5,C_6,C_4
$.
In such list, the elements $Y_k$ satisfying $m^2(R\cdot Y_k)=1$ are
$$
Y_k=E_4,A_6,B_5,C_6,C_4.
$$
Moreover, we also have the identity
$$
T^{-a_n}Rg(a_1,\dots,a_{n-1})\cdot X_j=Y_k,
$$
thus Lemma \ref{lemObstructionForMultiplicity1} implies also $m^2(T^{a_n}\cdot Y_k)=4$. Independently on $a_n$, the only elements in the list $E_4,A_6,B_5,C_6,C_4$ satisfying this last condition are $Y_k=C_4$ and $Y_k=C_6$, which are both elements of the accessible graph $\cA$. Now assume that $n$ is odd, so that we have the identity
$$
T^{-1}R\cdot Y_k=g(a_1,\dots,a_{n+1})\cdot X_j.
$$
Lemma \ref{lemObstructionForMultiplicity1} and the identity above imply $m^2(RT^{-1}R\cdot Y_k)=4$, which has for solutions
$$
Y_k=B_3,E_1,F_0,E_2,C_2,A_4,C_4.
$$
In such list, the elements $Y_k$ satisfying $m^2(R\cdot Y_k)=1$ are $
Y_k=B_3,E_1,C_2,A_4,C_4
$.
Moreover, we also have the identity
$$
T^{a_n}Rg(a_1,\dots,a_{n-1})\cdot X_j=Y_k,
$$
thus Lemma \ref{lemObstructionForMultiplicity1} implies also
and $m^2(T^{-a_n}\cdot Y_k)=4$. Independently on $a_n$, the only elements in the list $B_3,E_1,C_2,A_4,C_4$ satisfying this last condition are $Y_k=C_4$ and $Y_k=C_2$, which are both elements of the accessible graph $\cA$.

All other elements of $\cO$ satisfy $m^2(R\cdot Y_k)=4$ and they are $
Y_k=A_5,B_4,C_3,C_5,E_2,E_3,F_0
$.
In this list, the only elements which are not in the accessible graph $\cA$ are $Y_k=B_4$ and $Y_k=A_5$. If
$
B_4\in\support(X_j,\alpha)
$
then there exist $i$ with $0\leq i\leq 6$ with
$
R\cdot B_i\in\support(X_j,\alpha)
$.
Since
$
m^2(R\cdot R\cdot B_i)=m^2(B_i)=1
$
for any $i=0,\dots,6$, Lemma \ref{lemObstructionForMultiplicity1} (the part of the statement corresponding to $a_{n+1}=1$) implies
\begin{eqnarray*}
&&
\textrm{ either }
E_1=RT\cdot B_4=R\cdot B_5\in\support(X_j,\alpha)
\\
&&
\textrm{ or }
A_6=RT^{-1}\cdot B_4=R\cdot B_3\in\support(X_j,\alpha).
\end{eqnarray*}
The first case corresponds to $B_4=T^{-1}R\cdot E_1$ and Lemma \ref{lemObstructionForMultiplicity1} again implies  $E_1=T^{a}R\cdot Z_l$ for some $a$ with $2\leq a\leq 6$ and some $Z_l\in\cO$ with $m^2(R\cdot Z_l)=4$, and this is absurd since $m^2(T^{-a}\cdot E_1)=1$ for any $a$. The second case corresponds to $B_4=TR\cdot A_6$ and reasoning similarly we get necessarily $A_6=T^{-a}R\cdot Z_l$ for some $a$ with $2\leq a\leq 6$ and some $Z_l\in\cO$ with $m^2(R\cdot Z_l)=4$, which is absurd because $m^2(T^{a}\cdot A_6)=1$ for any $a$. If
$
A_5\in\support(X_j,\alpha)
$,
arguing as for $B_4$, we get
\begin{eqnarray*}
&&
\textrm{ either }
B_3=RT\cdot A_5\in\support(X_j,\alpha)
\\
&&
\textrm{ or }
E_4=RT^{-1}\cdot A_5\in\support(X_j,\alpha).
\end{eqnarray*}
The first case corresponds to $A_5=T^{-1}R\cdot B_3$ and implies $B_3=T^{a}R\cdot Z_l$ for some $Z_l\in\cO$ with $m^2(R\cdot Z_l)=4$, and this is impossible since $m^2(T^{-a}B_3)=1$ for any $a$. The second case corresponds to $A_5=TR\cdot E_4$ and implies $E_4=T^{-a}R\cdot Z_l$ for some $Z_l\in\cO$ with $m^2(R\cdot Z_l)=4$, and this is impossible since $m^2(T^aE_4)=1$ for any $a$. We proved that $\support(X_j,\alpha)$ is contained in the accessible graph $\cA$. Finally, according to Lemma \ref{lemCutEntriesAbove6} we eventually have also $1\leq a_n\leq 6$. It follows, according to Lemma \ref{LemInducingIntermediateGraph},  that the path $(X_j,\alpha)$ is eventually decomposed into elementary operations which are arrows of the intermediate graph $\cI$.  The Proposition is proved.
\end{proof}

\subsection{Blocks}\label{Secblocks}

A \emph{block} $H$ is a finite segment of a path $(X_j,\alpha)$ generated by some element $X_j\in\cO$ and some $\alpha$ irrational. More precisely a block is a triple of data
$
H=(Y_k,\{b_1,\dots,b_m\},\epsilon)
$
where $Y_k\in\cO$, the entries $b_1,\dots,b_m$ are positive integers and $\epsilon\in\{+,-\}$. If $H$ is a block, there exists $X_j\in\cO$ and $\alpha=[a_1,a_2,\dots]$ such that $Y_j\in\support(X_j,\alpha)$, that is
$
Y_k=g(a_1,\dots,a_n)\cdot X_j
$
for infinitely many $n$, and moreover for any such $n$ we have
$
a_{n+1}=b_1,\dots,a_{n+m}=b_m
$
and $\epsilon=+$ if $n$ is odd and $\epsilon=-$ if $n$ is even. In the following we will use often an alternative representation of blocks, with the explicit description of all operations as above. For example the blocks $H_1=(C_5,[5,6,1],-)$ and $H_2=(E_2,[3,1,6,5],+)$ will be represented respectively by the two sequences of operations
\begin{eqnarray*}
&&
H_1=
C_5
\quad\substack{5-\\ \longrightarrow}\quad
C_5
\quad\substack{6+\\ \longrightarrow}\quad
C_2
\quad\substack{1-\\ \longrightarrow}\quad
E_2
\\
&&
H_2=
E_2
\quad\substack{3+\\ \longrightarrow}\quad
C_2
\quad\substack{1-\\ \longrightarrow}\quad
E_2
\quad\substack{6+\\ \longrightarrow}\quad
C_5
\quad\substack{5-\\ \longrightarrow}\quad
C_4
\end{eqnarray*}
which appear as finite segments of the path
$
(E_2,\alpha=[\overline{3,1,6,5,6,1}])
$. 
We will assume that blocks are contained in the accessible graph $\cA$. 

\medskip

Let 
$
H=(Y_k,\{b_1,\dots,b_m\},\epsilon)
$ 
be a block which is contained in the accessible graph $\cA$. We define
$$
L^{inf}(H):=\inf L(X_j,\alpha),
$$
where $(X_j,\alpha)$ varies among all the paths with $\support(X_j,\alpha)\subset\cA$ which contains the block $H$ infinitely many times. If $M$ is a positive integer such that $b_i\leq M$ for all the entries of the block $H$ we define also
$
L_M^{inf}(H):=\inf L(X_j,\alpha)
$
where $(X_j,\alpha)$ varies among all the paths with $\support(X_j,\alpha)\subset\cA$ which contains the block $H$ infinitely many times and such that $\alpha=[a_1,a_2,\dots]$ eventually satisfies $a_n\leq M$. We have the following simple but very useful Lemma.

\begin{lemma}
\label{LemMinorationblocks}
Fix a positive integer $M$ and consider a block
$
H=(Y_k,[b_1,\dots,b_m],\epsilon)
$
of length $m\geq1$ such that $b_i\leq M$ for any $i$ with $1\leq i\leq m$. For any such $i$ we have
$$
L^{inf}_M(H)\geq
7\cdot
\frac
{[b_i,\dots,b_1,\overline{c,d}]+
b_{i+1}+
[b_{i+2},\dots,b_m,\overline{e,f}]}
{m^2(R\cdot g(b_1,\dots,b_i)\cdot Y_k)},
$$
where
\begin{eqnarray*}
&&
(c,d)=(e,f)=(M,1)
\textrm{ for }
i
\textrm{ even, }
m
\textrm{ odd }
\\
&&
(c,d)=(e,f)=(1,M)
\textrm{ for }
i,m
\textrm{ odd }
\\
&&
(c,d)=(M,1)
\textrm{ and }
(e,f)=(1,M)
\textrm{ for }
i,m
\textrm{ even }
\\
&&
(c,d)=(1,M)
\textrm{ and }
(e,f)=(1,M)
\textrm{ for }
i
\textrm{ odd, }
m
\textrm{ even. }
\end{eqnarray*}
\end{lemma}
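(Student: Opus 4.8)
The plan is to establish the inequality by showing $L(X_j,\alpha)\geq$ (right-hand side) for \emph{every} path $(X_j,\alpha)$ with $\support(X_j,\alpha)\subset\cA$ that passes through $H$ infinitely often and whose partial quotients eventually satisfy $a_n\leq M$. Fix such a path and fix $i$ with $1\leq i\leq m$. Since $H$ occurs infinitely often, there are infinitely many indices $n$ with $Y_k=g(a_1,\dots,a_n)\cdot X_j$ and $a_{n+1}=b_1,\dots,a_{n+m}=b_m$, and by the definition of a block all these $n$ have the same parity. First I would specialize Formula \eqref{eqFormulaLagrangeConstant} of Corollary \ref{cor:renormalizedformulafororbitB7} to the subsequence of times $n+i$ (for $n$ as above) and to the inner index equal to $b_i=a_{n+i}$, that is to the Gauss choice, obtaining
$$
L(X_j,\alpha)\geq 7\cdot\limsup_{n}\frac{D(n+i,b_i,\alpha)}{m^2\big(R\cdot g(a_1,\dots,a_{n+i})\cdot X_j\big)},
$$
the $\limsup$ being taken over the times at which $H$ occurs. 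The key point is that $g(a_1,\dots,a_{n+i})\cdot X_j$ is the vertex reached from $Y_k$ after performing the first $i$ elementary operations of $H$; since the parity of $n$, hence the signs of those operations, is fixed, this vertex is the same for all such $n$, so the denominator is the constant $\mu:=m^2\big(R\cdot g(b_1,\dots,b_i)\cdot Y_k\big)$ appearing in the statement (its value being unambiguous by Remark \ref{involutionrespectsmultiplicities}).

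Next I would expand the numerator: by Lemma \ref{lem1:renormalizedformula} in the Gauss case, together with $a_{n+1}=b_1,\dots,a_{n+m}=b_m$,
$$
D(n+i,b_i,\alpha)=[b_i,\dots,b_1,a_n,a_{n-1},\dots,a_1]+b_{i+1}+[b_{i+2},\dots,b_m,a_{n+m+1},a_{n+m+2},\dots],
$$
with the evident conventions when $i$ is $m-1$ or $m$. Each continued fraction here is a fixed block of letters followed by a tail that depends on $n$: the past tail $[a_n,a_{n-1},\dots,a_1]$ and the future tail $[a_{n+m+1},a_{n+m+2},\dots]$. For $n$ large the future tail has all of its partial quotients $\leq M$, hence by Lemma \ref{LemmaMaxMinCantorset} it lies in $\big[[\overline{M,1}],[\overline{1,M}]\big]$; the past tail is, for $n$ large, a long finite continued fraction whose leading partial quotients are all $\leq M$, so it too lies within $o(1)$ of that interval, the uncontrolled initial segment $a_1,\dots$ of $\alpha$ contributing only an $o(1)$ perturbation because deep partial quotients have exponentially small influence on the value. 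Since $t\mapsto[c_1,\dots,c_\ell,t]$ is increasing when $\ell$ is even and decreasing when $\ell$ is odd, replacing the past tail and the future tail by whichever of $[\overline{M,1}]$, $[\overline{1,M}]$ minimizes the corresponding summand — which, by this monotonicity, is precisely the periodic continuation $\overline{c,d}$, respectively $\overline{e,f}$, prescribed in the statement — and letting $n\to\infty$ gives
$$
\liminf_{n}D(n+i,b_i,\alpha)\geq [b_i,\dots,b_1,\overline{c,d}]+b_{i+1}+[b_{i+2},\dots,b_m,\overline{e,f}].
$$
Combining this with the previous display (using $\limsup\geq\liminf$) yields the desired lower bound for $L(X_j,\alpha)$, and taking the infimum over all admissible paths proves the Lemma.

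The steps that are merely routine are the verification of the four parity cases and of the boundary cases $i\in\{m-1,m\}$, together with the elementary fact that a long finite continued fraction with all leading partial quotients $\leq M$ is $o(1)$-close to the Cantor set $\{[c_1,c_2,\dots]:c_n\leq M\textrm{ for all }n\}$; this last estimate is the only place where a little care is needed, since the beginning of the continued fraction of $\alpha$ is not under control. The one genuinely structural ingredient, rather than an obstacle, is the reduction of the denominator to the constant $\mu$, which uses that all occurrences of a given block $H$ along a path sit at indices of a single parity.
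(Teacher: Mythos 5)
Your proposal is correct and follows essentially the same route as the paper's proof: restrict Formula \eqref{eqFormulaLagrangeConstant} to the Gauss approximations at the times inside the infinitely many occurrences of the block, expand $D(n+i,b_i,\alpha)$ via Lemma \ref{lem1:renormalizedformula}, and bound the two tails using Lemma \ref{LemmaMaxMinCantorset}. You are in fact somewhat more careful than the paper, which leaves implicit both the constancy of the denominator along the fixed-parity subsequence and the $o(1)$ estimate needed for the reversed (past) tail, whose deepest entries $a_1,a_2,\dots$ are not controlled.
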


\begin{proof}
Let $(X_j,\alpha)$, with $\alpha=[a_1,a_2,\dots]$ be any path containing the block $H$ infinitely many times and such that eventually $a_n\leq M$. Fix any $\delta>0$. Fix $i$ with $0\leq i\leq m-1$. Formula \eqref{eqFormulaLagrangeConstant} gives
\begin{eqnarray*}
&&
L(X_j,\alpha)=
7\limsup_{n\to\infty}
\max_{1\leq i\leq a_n}
\frac{D(n,i,\alpha)}
{m^2\big(R\cdot g(a_1,\dots,a_{n-1},i)\cdot X_i\big)}\geq
\\
&&
7\limsup_{n\to\infty}
\frac{D(n,a_n,\alpha)}
{m^2\big(R\cdot g(a_1,\dots,a_n)\cdot X_i\big)}
=
7\limsup_{n\to\infty}
\frac{[a_n,a_{n-1},\dots]+a_{n+1}+[a_{n+2},a_{n+3},\dots]}
{m^2\big(R\cdot g(a_1,\dots,a_n)\cdot X_i\big)}.
\end{eqnarray*}
The Lemma follows from Lemma \ref{LemmaMaxMinCantorset} observing that, since $H$ appears infinitely often in the path generated by $(X_j,\alpha)$, then there is $n$ arbitrarily big with
$$
a_{n+1-i}=b_1,
\dots,
a_{n+1}=b_{i+1},
\dots,
a_{n+m-i}=b_m.
$$
\end{proof}

The following Lemma is also practical to reduce the number of cases in our analysis. 

\begin{lemma}
\label{LemmaInvarianceBlocksInvolution}
Fix a positive integer $M$ and consider a block
$
H=(Y_k,[b_1,\dots,b_m],\epsilon)
$
of length $m\geq1$ such that $b_i\leq M$ for any $i$ with $1\leq i\leq m$. If $H$ is contained into the accessible graph $\cA$, then we have
$$
L^{inf}_M\big(\psi(H)\big)
=
L^{inf}_M\big(H\big).
$$
\end{lemma}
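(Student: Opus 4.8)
The plan is to realize $\psi$ as a value-preserving symmetry of the family of paths over which the two infima are taken. The basic facts I would use are that $\psi$ permutes the vertices of the accessible graph $\cA$ among themselves, that by Remark \ref{involutionrespectsmultiplicities} it preserves the multiplicity $m$, and that it sends the elementary operation from $X_j$ to $Y_k$ with label $a+$ to the operation from $\psi(X_j)$ to $\psi(Y_k)$ with label $a-$, and conversely — in particular keeping the magnitude $a$ unchanged and preserving the roles of source and target. Hence $\psi$ turns a block $H=(Y_k,[b_1,\dots,b_m],\epsilon)$ contained in $\cA$ into the block $\psi(H)=(\psi(Y_k),[b_1,\dots,b_m],\bar\epsilon)$, with $\bar\epsilon$ the opposite sign, which again lies in $\cA$ and has the \emph{same} sequence of magnitudes $b_1,\dots,b_m$; so the constraint $b_i\le M$ is invariant under $\psi$, and the two infima run over families of paths interchanged by $\psi$.

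First I would prove the key claim: $\psi$ sends every competitor $(X_j,\alpha)$ in the infimum defining $L^{inf}_M(H)$ to a competitor $(X'_{j'},\alpha')$ in the infimum defining $L^{inf}_M(\psi(H))$ with $L(X'_{j'},\alpha')=L(X_j,\alpha)$. Let $(X_j,\alpha)$ have $\support(X_j,\alpha)\subset\cA$, contain $H$ infinitely often, and have $\alpha=[a_1,a_2,\dots]$ eventually $\le M$. Since the support, the Lagrange value $L(X_j,\alpha)$, and the property of containing a given block all depend only on a tail of the path, we may pass to a tail running within $\cA$ and write it as a sequence of its elementary operations; applying $\psi$ operation by operation gives a sequence which again composes (because $\psi$ respects sources and targets) and whose alternating signs $+,-,+,\dots$ are reversed to $-,+,-,\dots$, with all magnitudes $a_n$ left unchanged. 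One then checks that this sign-reversed sequence is the tail of a genuine pair $(X'_{j'},\alpha')$: take as base point the $\psi$-image of an appropriate vertex visited by $(X_j,\alpha)$, and take for $\alpha'$ the continued fraction of $\alpha$ with one extra, irrelevant entry prepended, so that after its first operation the path $(X'_{j'},\alpha')$ follows exactly the $\psi$-image of the tail of $(X_j,\alpha)$. By construction $\support(X'_{j'},\alpha')=\psi\big(\support(X_j,\alpha)\big)\subset\cA$, the digits of $\alpha'$ are eventually $\le M$, and each occurrence of $H$ in $(X_j,\alpha)$ yields, with reversed parity, an occurrence of $\psi(H)$ in $(X'_{j'},\alpha')$, so $\psi(H)$ is contained infinitely often.

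To finish, I would compare the Lagrange values through Formula \eqref{eqFormulaLagrangeConstant}. The Gauss and Farey times of $(X'_{j'},\alpha')$ correspond to those of $(X_j,\alpha)$; at each of them the numerator $D(n,i,\cdot)$ is a function of the magnitudes of the nearby continued fraction entries alone, which agree for $\alpha$ and $\alpha'$, whereas the denominator $m^2\big(R\cdot g(\dots)\cdot X'_{j'}\big)$ equals the multiplicity-squared of the corresponding vertex on $(X_j,\alpha)$, because the whole vertex sequence of $(X'_{j'},\alpha')$ is the $\psi$-image of that of $(X_j,\alpha)$ and $\psi$ preserves $m$ by Remark \ref{involutionrespectsmultiplicities}. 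Thus the two $\limsup$'s in \eqref{eqFormulaLagrangeConstant} agree term by term, giving $L(X'_{j'},\alpha')=L(X_j,\alpha)$; taking the infimum over competitors yields $L^{inf}_M(\psi(H))\le L^{inf}_M(H)$, and applying this to $\psi(H)$ in place of $H$, together with $\psi^2=\mathrm{Id}$, gives the reverse inequality and hence equality. The one genuinely delicate step is the realization of the sign-reversed operation sequence as a legitimate pair $(X'_{j'},\alpha')$, which is a bookkeeping matter about the parity of the continued-fraction index; once it is in place, everything else is formal, using only Remark \ref{involutionrespectsmultiplicities} and that $\psi$ is a magnitude- and direction-preserving involution of $\cA$.
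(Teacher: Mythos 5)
Your proposal is correct and follows essentially the same route as the paper: apply the involution $\psi$ to the path operation by operation to produce a competitor for $\psi(H)$ with the same Lagrange value (via Formula \eqref{eqFormulaLagrangeConstant} and Remark \ref{involutionrespectsmultiplicities}), then use $\psi^2=\mathrm{Id}$ to get both inequalities. You are in fact somewhat more careful than the paper on the one delicate point — the paper simply asserts that $\alpha$ ``keeps unchanged,'' whereas you correctly observe that $\psi$ reverses all the signs $\pm$ and fix the resulting parity mismatch by prepending an (asymptotically irrelevant) entry to the continued fraction.
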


\begin{proof}
Consider a path $(X_j,\alpha)$ with $\support(X_j,\alpha)\subset\cA$, containing the block $H$ infinitely many times and such that $\alpha=[a_1,a_2,\dots]$ eventually satisfies $a_n\leq M$. We can assume without loosing in generality that all elementary operations composing $(X_j,\alpha)$ are arrows of the intermediate graph. Thus one can define a new path $(X_j',\alpha)$ by concatenation, in the same order, of the image under $\phi$ of the elementary operations composing $(X_j,\alpha)$, where obviously $\alpha=[a_1,a_2,\dots]$ keeps unchanged. The Lemma follows applying Formula \eqref{eqFormulaLagrangeConstant} and recalling Remark \ref{involutionrespectsmultiplicities}.
\end{proof}

\begin{lemma}
\label{lemCutEntriesAbove5}
The inequalities below hold.
\begin{eqnarray*}
&&
L^{inf}_6
\big(
F_0\quad\substack{6+\\ \longrightarrow }\quad C_3
\big)
=
L^{inf}_6
\big(
F_0\quad\substack{6-\\ \longrightarrow }\quad C_5
\big)
\geq\eta_2.
\\
&&
L^{inf}_6
\big(
C_3\quad\substack{6+\\ \longrightarrow }\quad C_4
\big)
=
L^{inf}_6
\big(
C_5\quad\substack{6-\\ \longrightarrow }\quad C_4
\big)
\geq\eta_2.
\\
&&
L^{inf}_6
\big(
C_5\quad\substack{6+\\ \longrightarrow }\quad C_2
\big)
=
L^{inf}_6
\big(
C_3\quad\substack{6-\\ \longrightarrow }\quad C_6
\big)
\geq\eta_2.
\\
&&
L^{inf}_6
\big(
E_2\quad\substack{6+\\ \longrightarrow }\quad C_5
\big)
=
L^{inf}_6
\big(
E_3\quad\substack{6-\\ \longrightarrow }\quad C_3
\big)
\geq\eta_2.
\\
\end{eqnarray*}
\end{lemma}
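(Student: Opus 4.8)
The plan is to reduce the eight inequalities to four by means of the involution $\psi$, and to prove each of the remaining four by enlarging the length‑one block to a length‑two block along a \emph{forced} elementary operation of the intermediate graph $\cI$ and then invoking Lemma \ref{LemMinorationblocks}.

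Within each displayed line the second block is the $\psi$‑image of the first: recall $\psi$ fixes $F_0$ and $C_4$, exchanges $C_2\leftrightarrow C_6$, $C_3\leftrightarrow C_5$, $E_2\leftrightarrow E_3$, and replaces a label $+a$ by $-a$ (Remark \ref{involutionrespectsmultiplicities}), so for instance $\psi$ sends $F_0\,\substack{6+\\\longrightarrow}\,C_3$ to $F_0\,\substack{6-\\\longrightarrow}\,C_5$, and likewise in the other three lines. Hence by Lemma \ref{LemmaInvarianceBlocksInvolution} the two quantities in each line coincide, and it remains to bound from below the first block of each line, i.e. the four single elementary operations with label $6+$ in $\cI$, namely those carrying $F_0$ to $C_3$, $C_3$ to $C_4$, $C_5$ to $C_2$ and $E_2$ to $C_5$.

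Fix one such block $H=(Y_k,[6],+)$ and let $(X_j,\alpha)$ be a path with $\support(X_j,\alpha)\subset\cA$ containing $H$ infinitely often and with $a_n\le 6$ eventually; we must show $L(X_j,\alpha)\ge\eta_2$. If $L(X_j,\alpha)\ge\eta_3$ there is nothing to prove, so assume $L(X_j,\alpha)<\eta_3$; by Proposition \ref{PropReductionToIntermediateGraph} the path is then eventually decomposed into arrows of $\cI$. One reads off $\cI$ that $C_4$ and $C_2$ each have a unique outgoing arrow, namely $C_4\,\substack{1-\\\longrightarrow}\,F_0$ and $C_2\,\substack{1-\\\longrightarrow}\,E_2$, and that $F_0$ and $E_2$ each have a unique incoming arrow, again $C_4\,\substack{1-\\\longrightarrow}\,F_0$ and $C_2\,\substack{1-\\\longrightarrow}\,E_2$. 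Therefore at every late occurrence of $H$ the operation immediately after (for the blocks passing through $C_4$, resp. $C_2$) or immediately before (for the blocks ending at $F_0$, resp. beginning at $E_2$) is forced, so $H$ enlarges to a length‑two block $H'$ which still occurs infinitely often: in the four cases $H'$ is respectively
$$
C_4\,\substack{1-\\\longrightarrow}\,F_0\,\substack{6+\\\longrightarrow}\,C_3,\qquad
C_3\,\substack{6+\\\longrightarrow}\,C_4\,\substack{1-\\\longrightarrow}\,F_0,\qquad
C_5\,\substack{6+\\\longrightarrow}\,C_2\,\substack{1-\\\longrightarrow}\,E_2,\qquad
C_2\,\substack{1-\\\longrightarrow}\,E_2\,\substack{6+\\\longrightarrow}\,C_5 .
$$

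It now suffices to apply Lemma \ref{LemMinorationblocks} with $M=6$ to $H'$, choosing the index $i$ so that the central entry of the numerator is the entry $6$ of $H'$. In each case the vertex occurring in the denominator is one of $C_3,C_5,E_2,F_0$, and all of these satisfy $m^2(R\cdot\,\cdot)=4$ (they are among the elements listed with this property in the proof of Proposition \ref{PropReductionToIntermediateGraph}). For the first and fourth block one takes $i=1$ and Lemma \ref{LemMinorationblocks} gives numerator $[1,\overline{1,6}]+6+[\overline{1,6}]$; for the second and third one takes $i=0$ and gets numerator $[\overline{6,1}]+6+[1,\overline{1,6}]$. Since $[\overline{1,6}]>[\overline{6,1}]$, in all four cases
$$
L^{inf}_6(H')\ \ge\ 7\cdot\frac{[1,\overline{1,6}]+6+[\overline{6,1}]}{4}\ =\ \eta_2 ,
$$
hence $L(X_j,\alpha)\ge L^{inf}_6(H')\ge\eta_2$; taking the infimum over all admissible $(X_j,\alpha)$ yields $L^{inf}_6(H)\ge\eta_2$, and Lemma \ref{LemmaInvarianceBlocksInvolution} then transfers this to the $\psi$‑image of each block, which finishes the proof. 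The computation is immediate; the substance of the argument is the bookkeeping in $\cI$ — checking that $C_4,C_2,F_0,E_2$ each have a unique incoming or outgoing arrow, so that the $6$‑step is flanked by a $1$‑step, and that the multiplicity in the denominator is exactly $4$ — which is precisely what raises the bound to $\eta_2$ rather than the weaker $7\,(6+2[\overline{6,1}])/4<\eta_2$ produced by the length‑one block alone.
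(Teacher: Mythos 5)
Your proof is correct and follows essentially the same route as the paper's: reduce the eight blocks to four via the involution $\psi$ and Lemma \ref{LemmaInvarianceBlocksInvolution}, prolong each $6$-labelled arrow through $C_4$ (resp. $C_2$) to a length-two block in which the $6$ is flanked by a $1$, and apply Lemma \ref{LemMinorationblocks} with denominator $4$ to reach $\eta_2$. The only (immaterial) difference is that you force the adjacent $1$-entry by invoking Proposition \ref{PropReductionToIntermediateGraph} under the harmless assumption $L<\eta_3$, whereas the paper handles adjacent entries $a\geq 2$ directly via $m^2(R\cdot C_4)=m^2(R\cdot C_2)=1$, which yields the bound $14$ and hence $\min\{14,\eta_2\}=\eta_2$.
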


\begin{proof}
The four equalities for the eight blocks in the statement follow from Lemma \ref{LemmaInvarianceBlocksInvolution} and from the correspondence between these blocks under the involution $\psi$, that is
\begin{eqnarray*}
&&
F_0\quad\substack{6-\\ \longrightarrow }\quad C_5=
\psi
\big(
F_0\quad\substack{6+\\ \longrightarrow }\quad C_3
\big)
\\
&&
C_5\quad\substack{6-\\ \longrightarrow }\quad C_4=
\psi
\big(
C_3\quad\substack{6+\\ \longrightarrow }\quad C_4
\big)
\\
&&
C_3\quad\substack{6-\\ \longrightarrow }\quad C_6=
\psi
\big(
C_5\quad\substack{6+\\ \longrightarrow }\quad C_2
\big)
\\
&&
E_3\quad\substack{6-\\ \longrightarrow }\quad C_3=
\psi
\big(
E_2\quad\substack{6+\\ \longrightarrow }\quad C_5
\big).
\end{eqnarray*}
and  by Remark \ref{involutionrespectsmultiplicities}, that implies that the desired  values of $L_6^{inf}$ are the same. 

The lemma thus follows proving that $L^{inf}_6(H_i)\geq\eta_2$ for $i=1,\dots,4$, where we set for simplicity
$$
H_1:=
F_0\quad\substack{6+\\ \longrightarrow }\quad C_3
\textrm{ , }
H_2:=
C_3\quad\substack{6+\\ \longrightarrow }\quad C_4
\textrm{ , }
H_3:=
C_5\quad\substack{6+\\ \longrightarrow }\quad C_2
\textrm{ , }
H_4:=
E_2\quad\substack{6+\\ \longrightarrow }\quad C_5.
$$
The block $H_1$ has as prolongation
$$
H'_1:=
C_4\quad\substack{1-\\ \longrightarrow }\quad
F_0\quad\substack{6+\\ \longrightarrow }\quad
C_3,
$$
Lemma \ref{LemMinorationblocks} with parameters $m=2$, $i=2$ implies
$$
L^{inf}(H'_1)
\geq
7\cdot
\frac{[1,\overline{1,6}]+6+[\overline{6,1}]}{4}=\eta_2.
$$
Since $m(C_4)=1$, then
$
L^{inf}
\big(
C_4\quad\substack{a-\\ \longrightarrow }\quad
F_0
\big)
>7\cdot2
$
for any $a\geq 2$ and therefore
$$
L^{inf}(H_1)\geq\min\{14,L^{inf}(H'_1)\}\geq\eta_2.
$$
The block $H_2$ has as prolongation the block
$$
H'_2:=
C_3\quad\substack{6+\\ \longrightarrow }\quad
C_4\quad\substack{1-\\ \longrightarrow }\quad
F_0,
$$
Lemma \ref{LemMinorationblocks} with parameters $m=2$, $i=1$ implies
$$
L^{inf}(H'_2)
\geq
7\cdot
\frac{[1,\overline{1,6}]+6+[\overline{6,1}]}{4}=\eta_2.
$$
Since $m(C_4)=1$, then
$
L^{inf}
\big(
C_4\quad\substack{a-\\ \longrightarrow }\quad
F_0
\big)
>7\cdot2
$
for any $a\geq 2$ and therefore
$$
L^{inf}(H_2)\geq\min\{14,L^{inf}(H'_2)\}\geq\eta_2.
$$
The block $H_3$ has as prolongation the block
$$
H'_3:=
C_5\quad\substack{6+\\ \longrightarrow }\quad
C_2\quad\substack{1-\\ \longrightarrow }\quad
E_3,
$$
Lemma \ref{LemMinorationblocks} with parameters $m=2$, $i=2$ implies
$$
L^{inf}(H'_3)
\geq
7\cdot
\frac{[1,\overline{1,6}]+6+[\overline{6,1}]}{4}=\eta_2.
$$
Since $m(C_2)=1$, then
$
L^{inf}
\big(
C_2\quad\substack{a-\\ \longrightarrow }\quad
X_j
\big)
>7\cdot2
$
for any $a\geq 2$ and therefore
$$
L^{inf}(H_3)\geq\min\{14,L^{inf}(H')\}\geq\eta_2.
$$
In fact one can also observe that
$
C_2\quad\substack{1-\\ \longrightarrow }\quad E_3
$
is the only arrow of the intermediate graph $\cI$ which starts at $C_2$. Finally, the block $H_4$ has as prolongation the block
$$
H'_4:=
C_5\quad\substack{6+\\ \longrightarrow }\quad
C_2\quad\substack{1-\\ \longrightarrow }\quad
E_3,
$$
Lemma \ref{LemMinorationblocks} with parameters $m=2$, $i=2$ implies
$$
L^{inf}(H'_4)
\geq
7\cdot
\frac{[1,\overline{1,6}]+6+[\overline{6,1}]}{4}=\eta_2.
$$
Since $m(C_2)=1$, then
$
L^{inf}
\big(
C_2\quad\substack{a-\\ \longrightarrow }\quad
X_j
\big)
>7\cdot2
$
for any $a\geq 2$ and therefore
$$
L^{inf}(H_4)\geq\min\{14,L^{inf}(H'_4)\}\geq\eta_2.
$$
As for $H_3$, one can also observe that
$
C_2\quad\substack{1-\\ \longrightarrow }\quad E_3
$
is the only arrow of the intermediate graph which starts at $C_2$.
\end{proof}

\begin{corollary}
\label{CoroReductionTointermediateGraph}
Let $(X_j,\alpha)$ be pair with $X_j\in\cO$ and $\alpha=[a_1,a_2,\dots]$ is irrational such that
$$
L(X_j,\alpha)<\eta_2.
$$
Then $\support(X_j,\alpha)$ is contained in the accessible graph $\cA$ and we eventually have $1\leq a_n\leq 5$. 
\end{corollary}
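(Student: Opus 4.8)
The plan is to deduce the corollary from Proposition \ref{PropReductionToIntermediateGraph} and Lemma \ref{lemCutEntriesAbove5}, with essentially no additional work. Since $\eta_2<\eta_3$, the hypothesis $L(X_j,\alpha)<\eta_2$ gives in particular $L(X_j,\alpha)<\eta_3$, so Proposition \ref{PropReductionToIntermediateGraph} applies and yields at once that $\support(X_j,\alpha)$ is contained in the accessible graph $\cA$, that $1\le a_n\le 6$ eventually, and that the tail of the path $(X_j,\alpha)$ is decomposed into elementary operations which are arrows of the intermediate graph $\cI$. Thus the only thing left to prove is that one cannot have $a_n=6$ for infinitely many $n$; the eventual bound $a_n\le 5$ then follows immediately.

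First I would record which arrows of $\cI$ carry the label $\pm 6$: inspecting the list of the $30$ arrows of $\cI$, these are precisely the eight elementary operations
\begin{align*}
&C_5\,\substack{6-\\ \longrightarrow}\,C_4,\quad
C_3\,\substack{6+\\ \longrightarrow}\,C_4,\quad
F_0\,\substack{6-\\ \longrightarrow}\,C_5,\quad
F_0\,\substack{6+\\ \longrightarrow}\,C_3,\\
&E_3\,\substack{6-\\ \longrightarrow}\,C_3,\quad
E_2\,\substack{6+\\ \longrightarrow}\,C_5,\quad
C_5\,\substack{6+\\ \longrightarrow}\,C_2,\quad
C_3\,\substack{6-\\ \longrightarrow}\,C_6,
\end{align*}
that is, exactly the eight single-arrow blocks treated in Lemma \ref{lemCutEntriesAbove5}. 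Now suppose, for contradiction, that $a_n=6$ for infinitely many $n$. For every such $n$ in the tail where the path has been decomposed into arrows of $\cI$, the elementary operation of $(X_j,\alpha)$ consuming the entry $a_n$ (that is, $T^{\pm 6}R$ carrying one Gauss vertex $g(a_1,\dots,a_{n-1})\cdot X_j$ to the next one $g(a_1,\dots,a_n)\cdot X_j$) is an arrow of $\cI$ with label $\pm 6$, hence one of the eight listed above. Since there are infinitely many such steps but only eight arrows, by the pigeonhole principle one of these eight blocks, say $H$, occurs infinitely often in $(X_j,\alpha)$. As moreover $\support(X_j,\alpha)\subset\cA$ and $a_n\le 6$ eventually, the definition of $L^{inf}_6$ and Lemma \ref{lemCutEntriesAbove5} give
$$
L(X_j,\alpha)\ \ge\ L^{inf}_6(H)\ \ge\ \eta_2,
$$
which contradicts $L(X_j,\alpha)<\eta_2$. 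Hence $a_n\le 5$ eventually, as claimed.

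I do not expect any real obstacle here: the two quoted results do all the work, and what remains is bookkeeping. The one point that must be checked with some care is the identification of the $\pm 6$ arrows of $\cI$ with the eight blocks of Lemma \ref{lemCutEntriesAbove5}; this relies on Proposition \ref{PropReductionToIntermediateGraph} having already removed every vertex and arrow of $\cG$ lying outside $\cA$, so that no $\pm 6$ operation other than those eight can recur infinitely often in the path once $L(X_j,\alpha)<\eta_3$.
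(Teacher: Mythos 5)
Your overall strategy is the same as the paper's: invoke Proposition \ref{PropReductionToIntermediateGraph} to get $\support(X_j,\alpha)\subset\cA$ and $a_n\leq 6$ eventually, then use Lemma \ref{lemCutEntriesAbove5} to show that no label-$6$ operation can recur. The identification of the eight label-$6$ arrows of $\cI$ with the eight blocks of Lemma \ref{lemCutEntriesAbove5} is correct, and the inequality $L(X_j,\alpha)\geq L^{inf}_6(H)\geq\eta_2$ for a block $H$ recurring in the path is used exactly as in the paper.

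However, there is one case you skip that the paper treats explicitly, and it is precisely the point you flagged as needing care. Your argument assumes that every elementary operation with label $\pm 6$ occurring in the tail of the path is an arrow of $\cI$, on the strength of the last clause of Proposition \ref{PropReductionToIntermediateGraph}. But the operations
$
C_4\quad\substack{6+\\ \longrightarrow}\quad F_0
$
and
$
C_4\quad\substack{6-\\ \longrightarrow}\quad F_0
$
are genuine label-$6$ elementary operations with both endpoints in $\cA$: since $R\cdot C_4=F_0$ and $T$ fixes $F_0$, one has $T^{\pm a}R\cdot C_4=F_0$ for \emph{every} $a$, so nothing in the support condition $\support(X_j,\alpha)\subset\cA$ rules out their recurring in the path; only the case $a=1$ appears among the $30$ arrows of $\cI$. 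The paper's own proof does not lean on the "decomposed into arrows of $\cI$" conclusion here (nor does it later, in the proof of Proposition \ref{propReductionToSmallgraph}, where $C_4\,\substack{a\pm\\ \longrightarrow}\,F_0$ with $2\leq a\leq 5$ is again excluded by hand): after disposing of the eight blocks of Lemma \ref{lemCutEntriesAbove5}, it observes that the only remaining possibility for infinitely many $a_n=6$ is one of the two blocks above, and kills it with the multiplicity bound $m^2(R\cdot C_4)=1$, which forces $L^{inf}\bigl(C_4\,\substack{6\pm\\ \longrightarrow}\,F_0\bigr)>7\cdot 6=42>\eta_2$. You should add this one line; without it the pigeonhole step "only eight arrows carry the label $\pm 6$" is not justified by what Proposition \ref{PropReductionToIntermediateGraph} actually establishes.
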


\begin{proof}
Let $X_j\in\cO$ and $\alpha=[a_1,a_2,\dots]$ be data as in the statement. Proposition \ref{PropReductionToIntermediateGraph} implies $\support(X_j,\alpha)\subset\cA$ and $a_n\leq 6$ eventually. Therefore, for any block $H$ appearing infinitely many times in the path $(X_j,\alpha)$ we have $L^{inf}(H)=L^{inf}_6(H)$. According to Lemma \ref{lemCutEntriesAbove5}, none of the eight blocks listed in the Lemma appear infinitely many times in the path $(X_j,\alpha)$. Therefore, if there exists infinitely many $n$ with $a_n=6$ then either the block
$
C_4\quad\substack{6+\\ \longrightarrow }\quad F_0
$
or the block
$
C_4\quad\substack{6-\\ \longrightarrow }\quad F_0
$
appear infinitely many times in $(X_j,\alpha)$, but this is absurd because $(R\cdot C_4)=1$ and thus
$
L^{inf}
\big(
C_4\quad\substack{6+\\ \longrightarrow }\quad F_0
\big)>7\cdot6
$
and
$
L^{inf}
\big(
C_4\quad\substack{6-\\ \longrightarrow }\quad F_0
\big)>7\cdot6
$.
\end{proof}

\subsection{Proof of Proposition \ref{propReductionToSmallgraph}}
\label{SecProofOfPropositionReductionToSmallGraph}

Consider data $X_j\in\cO$ and $\alpha=[a_1,a_2,\dots]$ such that
$$
L(X_j,\alpha)<
\eta_1=
7\cdot
\frac{[1,4,2,\overline{1,5}]+5+[1,5,1,\overline{1,5}]}{4}
=
11,655309\pm10^{-6}.
$$
Since $\eta_1<\eta_2$, Corollary \ref{CoroReductionTointermediateGraph} implies that $\support(X_j,\alpha)$ is contained in the accessible graph $\cA$ and $a_n\leq 5$ eventually. The only elementary operations ending in $F_0$ are
$
C_4\quad\substack{a+\\ \longrightarrow }\quad F_0
$
and
$
C_4\quad\substack{a-\\ \longrightarrow }\quad F_0
$
with $a=1,\dots,5$. Since $m^2(R\cdot C_4)=1$, among these the only compatible with the assumption corresponds to $a=1$, that is the arrows $\cG_{CF}$ and $\cG'_{CF}$ of the intermediate graph $\cI$. We will now show that such elementary operations cannot be repeated infinitely many times in paths $(X_j,\alpha)$ with $L(X_j,\alpha)<\eta_1$.

\begin{lemma}
\label{LemCutFzero}
We have
$$
L^{inf}_5
\big(
C_4\quad\substack{1+\\ \longrightarrow }\quad F_0
\big)=
L^{inf}_5
\big(
C_4\quad\substack{1-\\ \longrightarrow }\quad F_0
\big)
\geq\eta_1.
$$
\end{lemma}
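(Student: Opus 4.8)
The equality is immediate from Lemma \ref{LemmaInvarianceBlocksInvolution}: the involution $\psi$ fixes both $C_4$ and $F_0$ and exchanges the signs $+$ and $-$, so $C_4\xrightarrow{1-}F_0=\psi\bigl(C_4\xrightarrow{1+}F_0\bigr)$ and hence $L^{inf}_5\bigl(C_4\xrightarrow{1-}F_0\bigr)=L^{inf}_5\bigl(C_4\xrightarrow{1+}F_0\bigr)$. It thus suffices to prove $L^{inf}_5\bigl(C_4\xrightarrow{1+}F_0\bigr)\ge\eta_1$, i.e.\ that every path $(X_j,\alpha)$ with $\support(X_j,\alpha)\subset\cA$, with $a_n\le5$ eventually, and containing the operation $C_4\xrightarrow{1+}F_0$ infinitely often satisfies $L(X_j,\alpha)\ge\eta_1$.

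The plan is to fix such an occurrence, say at position $n$ (so $g(a_1,\dots,a_n)\cdot X_j=C_4$, $a_{n+1}=1$ and $g(a_1,\dots,a_{n+1})\cdot X_j=F_0$), and to read off from the arrow list of $\cI$ which operations may precede and follow it. Since the operation leaving $C_4$ has sign $+$, the one entering $C_4$ has sign $-$; discarding the arrows of entry $6$ (forbidden because $a_n\le5$ eventually), this entering operation is $C_3\xrightarrow{1-}C_4$ (so $a_n=1$) or $E_3\xrightarrow{5-}C_4$ (so $a_n=5$), and likewise the operation leaving $F_0$ has sign $-$ and is $F_0\xrightarrow{1-}C_3$ ($a_{n+2}=1$) or $F_0\xrightarrow{5-}C_6$ ($a_{n+2}=5$). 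By pigeonhole one of these four combinations recurs infinitely often; in each case I would prolong the resulting block enough to pin down a few further entries, and then bound $L(X_j,\alpha)$ from below via Lemma \ref{LemMinorationblocks} (one Gauss term of Formula \eqref{eqFormulaLagrangeConstant}), using the worst-case periodic tails supplied by Lemma \ref{LemmaMaxMinCantorset}.

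If $a_n=1$ or $a_{n+2}=1$ the estimate is short and is centered at the vertex $C_4$, where $m^2(R\cdot C_4)=m^2(F_0)=1$. Inspecting $\cI$, the only arrows of entry $\le5$ and sign $+$ incident to $C_3$ are $E_2\xrightarrow{4+}C_3$, $C_3\xrightarrow{5+}C_3$ and $C_3\xrightarrow{4+}C_2$, all of entry in $\{4,5\}$, so the $1$ adjacent to the $C_4$--$F_0$ crossing is itself adjacent, further out, to an entry in $\{4,5\}$; the relevant Gauss term is then at least $[1,4,\overline{5,1}]+1+[5,1,\overline{5,1}]$ (with the two sides interchanged when $a_{n+2}=1$), which already forces $L(X_j,\alpha)\ge 7\bigl([1,4,\overline{5,1}]+1+[5,1,\overline{5,1}]\bigr)>\eta_1$. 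The remaining case is $a_n=a_{n+2}=5$, i.e.\ the recurring block $E_3\xrightarrow{5-}C_4\xrightarrow{1+}F_0\xrightarrow{5-}C_6$; here the Gauss term at $C_4$ is too small, and I would instead center the estimate at the $E_3$, where $m^2(R\cdot E_3)=m^2(C_2)=4$ and the middle entry of Lemma \ref{LemMinorationblocks} is the $5$ of $E_3\xrightarrow{5-}C_4$. As $C_6\xrightarrow{1+}E_3$ is the only arrow of $\cI$ ending at $E_3$, the operation into $E_3$ and the operation out of the following $C_6$ are forced, while the operation into the preceding $C_6$ is $C_5\xrightarrow{4-}C_6$ or $F_0\xrightarrow{5-}C_6$, and in the former sub-case the operation into $C_5$ is $C_5\xrightarrow{2+}C_5$ or $F_0\xrightarrow{1+}C_5$; in each of these finitely many sub-cases Lemma \ref{LemMinorationblocks} yields a bound $\ge\eta_1$, the minimum being $7\bigl([1,4,2,\overline{1,5}]+5+[1,5,1,\overline{1,5}]\bigr)/4=\eta_1$, attained along the prolongation $C_5\xrightarrow{2+}C_5\xrightarrow{4-}C_6\xrightarrow{1+}E_3\xrightarrow{5-}C_4\xrightarrow{1+}F_0\xrightarrow{5-}C_6\xrightarrow{1+}E_3$ --- which is exactly where the constant $\eta_1$ of the statement originates.

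I expect the main obstacle to be this last case: one must verify that the graph bookkeeping (which arrows of $\cI$ may precede or follow each vertex, once sign-alternation and the bound $a_n\le5$ are taken into account) is genuinely exhaustive, and that the prolongation around the central $E_3$ is carried far enough --- several entries back and forward --- for the Lemma \ref{LemMinorationblocks} estimate to actually clear $\eta_1$ rather than fall just short of it, as the cruder bounds do. The cases $a_n=1$ and $a_{n+2}=1$ should be comparatively routine once the list of arrows of $\cI$ incident to $C_3$ is available.
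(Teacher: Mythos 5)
Your proposal reproduces the paper's argument step for step: the equality via the involution $\psi$; the elimination of the sub-cases $a_n=1$ and $a_{n+2}=1$ (the paper gets these from Lemma \ref{lemObstructionForMultiplicity1}, you from a direct continued-fraction bound at the vertex $C_4$ --- both work); and the reduction of the remaining case $a_n=a_{n+2}=5$ to the forced block $C_6\xrightarrow{1+}E_3\xrightarrow{5-}C_4\xrightarrow{1+}F_0\xrightarrow{5-}C_6\xrightarrow{1+}E_3$, estimated by the Gauss term centred at the step $E_3\xrightarrow{5-}C_4$. Your three backward prolongations are exactly the paper's blocks $H'_1$, $H'_2$, $H'_3$, and your identification of $\eta_1$ as the worst of the resulting bounds matches the paper.

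There is, however, a genuine gap in your case enumeration --- and the paper's own proof contains the identical omission. The arrows of $\cI$ entering $C_6$ with sign $-$ and entry at most $5$ are not only $C_5\xrightarrow{4-}C_6$ and $F_0\xrightarrow{5-}C_6$ but also $\cF_3:=E_3\xrightarrow{3-}C_6$, which is even one of the five arrows of the small graph $\cS$; nothing in the sign-alternation or admissibility constraints prevents it from preceding $C_6\xrightarrow{1+}E_3\xrightarrow{5-}C_4$. In that case the backward tail at the central $E_3$ begins $[1,3,\dots]<[1,4,\dots]$, and the Gauss term is only bounded below by $7\bigl([1,3,\overline{1,5}]+5+[1,5,1,\overline{1,5}]\bigr)/4\approx 11{,}622<\eta_1$. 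This does not look reparable by longer prolongations: the even loop
$$
C_6\xrightarrow{1+}E_3\xrightarrow{3-}C_6\xrightarrow{1+}E_3\xrightarrow{5-}C_4\xrightarrow{1+}F_0\xrightarrow{5-}C_6
$$
uses only arrows listed in $\cI$, has all entries at most $5$, contains $C_4\xrightarrow{1+}F_0$ infinitely often, and by Equation \eqref{eqFormulaLagrangeConstantQuadrIrrat} (the maximum over all Gauss and Farey terms is attained at the vertices $E_3$ and $F_0$, whose denominators $m^2(R\cdot E_3)=m^2(R\cdot F_0)=4$) gives
$$
L\big(C_6,[\overline{1,3,1,5,1,5}]\big)
=7\cdot\frac{[\overline{1,3,1,5,1,5}]+5+[\overline{1,5,1,3,1,5}]}{4}
=11{,}6319\pm 10^{-4}
<\eta_1=11{,}6553\pm 10^{-4}.
$$
Unless some constraint excludes this block --- and neither your argument nor the paper's supplies one --- this shows $L^{inf}_5\big(C_4\xrightarrow{1+}F_0\big)<\eta_1$, so the inequality cannot be established along these lines without either ruling out the predecessor $E_3\xrightarrow{3-}C_6$ or lowering the threshold $\eta_1$ (which would propagate into Proposition \ref{propReductionToSmallgraph} and beyond). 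You should at minimum add this third case to your analysis and confront the resulting bound honestly.
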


\begin{proof}
The equality in the statement holds because the two blocks correspond under the involution $\psi$, that is
$$
C_4\quad\substack{1-\\ \longrightarrow }\quad F_0
=
\psi
\big(
C_4\quad\substack{1+\\ \longrightarrow }\quad F_0
\big).
$$
The only arrow of the intermediate graph $\cI$ which starts at $C_2$ is
$
C_2\quad\substack{1-\\ \longrightarrow }\quad E_2
$,
therefore the elementary operation 
$
F_0\quad\substack{2-\\ \longrightarrow }\quad C_2
$
is not admissible on the accessible graph $\cA$. Moreover
$
C_2\quad\substack{1-\\ \longrightarrow }\quad E_2
$
is also the only arrow of $\cI$ ending in $E_2$, therefore the elementary operation 
$
E_2\quad\substack{2-\\ \longrightarrow }\quad C_4
$
is not admissible in the accessible graph $\cA$. It follows that the only length-two blocks of the accessible graph $\cA$ which start with
$
C_4\quad\substack{1+\\ \longrightarrow }\quad F_0
$
are
\begin{eqnarray*}
&&
H_1:=
C_4\quad\substack{1+\\ \longrightarrow }\quad
F_0\quad\substack{1-\\ \longrightarrow }\quad
C_3
\\
&&
H_2:=
C_4\quad\substack{1+\\ \longrightarrow }\quad
F_0\quad\substack{5-\\ \longrightarrow }\quad
C_6.
\end{eqnarray*}
For the same reason, the only length-two blocks of the accessible graph $\cA$ which end with
$
C_4\quad\substack{1+\\ \longrightarrow }\quad F_0
$
are
\begin{eqnarray*}
&&
H_3:=
C_3\quad\substack{1-\\ \longrightarrow }\quad
C_4\quad\substack{1+\\ \longrightarrow }\quad
F_0
\\
&&
H_4:=
E_3\quad\substack{5-\\ \longrightarrow }\quad
C_4\quad\substack{1+\\ \longrightarrow }\quad
F_0.
\end{eqnarray*}
According to Lemma \ref{lemObstructionForMultiplicity1}, we have
$
L^{inf}\big(H_1\big)\geq\eta_3>\eta_1
$
and
$
L^{inf}\big(H_3\big)\geq\eta_3>\eta_1
$.
Therefore, if the path $(X_j,\alpha)$ contain infinitely many times $F_0$, then, eventually, at any occurrence of $F_0$ it has to contain the block
$$
E_3\quad\substack{5-\\ \longrightarrow }\quad
C_4\quad\substack{1+\\ \longrightarrow }\quad
F_0\quad\substack{5-\\ \longrightarrow }\quad
C_6.
$$
Moreover
$
C_6\quad\substack{1+\\ \longrightarrow }\quad E_3
$
is both the only arrow of $\cI$ starting at $C_6$ and the only arrow in $\cI$ ending in $E_3$. Therefore, at any occurrence of $F_0$ in the path $(X_j,\alpha)$ we must eventually have the block
$$
C_6\quad\substack{1+\\ \longrightarrow }\quad
E_3\quad\substack{5-\\ \longrightarrow }\quad
C_4\quad\substack{1+\\ \longrightarrow }\quad
F_0\quad\substack{5-\\ \longrightarrow }\quad
C_6\quad\substack{1+\\ \longrightarrow }\quad
E_3.
$$
Any path containing infinitely many times the block above must contain infinitely many time one of the blocks below
\begin{eqnarray*}
&&
H'_1:=
F_0\quad\substack{1+\\ \longrightarrow }\quad
C_5\quad\substack{4-\\ \longrightarrow }\quad
C_6\quad\substack{1+\\ \longrightarrow }\quad
E_3\quad\substack{5-\\ \longrightarrow }\quad
C_4\quad\substack{1+\\ \longrightarrow }\quad
F_0\quad\substack{5-\\ \longrightarrow }\quad
C_6\quad\substack{1+\\ \longrightarrow }\quad
E_3
\\
&&
H'_2:=
C_5\quad\substack{2+\\ \longrightarrow }\quad
C_5\quad\substack{4-\\ \longrightarrow }\quad
C_6\quad\substack{1+\\ \longrightarrow }\quad
E_3\quad\substack{5-\\ \longrightarrow }\quad
C_4\quad\substack{1+\\ \longrightarrow }\quad
F_0\quad\substack{5-\\ \longrightarrow }\quad
C_6\quad\substack{1+\\ \longrightarrow }\quad
E_3
\\
&&
H'_3:=
F_0\quad\substack{5-\\ \longrightarrow }\quad
C_6\quad\substack{1+\\ \longrightarrow }\quad
E_3\quad\substack{5-\\ \longrightarrow }\quad
C_4\quad\substack{1+\\ \longrightarrow }\quad
F_0\quad\substack{5-\\ \longrightarrow }\quad
C_6\quad\substack{1+\\ \longrightarrow }\quad
E_3.
\end{eqnarray*}
The Lemma follows observing that
$$
L^{inf}_5\big(H'_3\big)
\geq
L^{inf}_5\big(H'_1\big)
\geq
L^{inf}_5\big(H'_2\big)
\geq
7\cdot
\frac{[1,4,2,\overline{1,5}]+5+[1,5,1,\overline{1,5}]}{4}.
$$
\end{proof}

\begin{corollary}
\label{CoroDisconnettingIntermediateGraph}
Consider data $(X_j,\alpha)$ such that $L(X_j,\alpha)<\eta_1$. The arrows that can appear infinitely many times in the path generated by $(X_j,\alpha)$ are either only the six arrows
$$
\cF_1:=
C_6\quad\substack{1+\\ \longrightarrow }\quad E_3
\quad
\textrm{ , }
\quad
\cF_2:=
C_5\quad\substack{2+\\ \longrightarrow }\quad C_5
\quad
\textrm{ , }
\quad
\cF_3:=
E_3\quad\substack{3-\\ \longrightarrow }\quad C_6
$$
$$
\cF_{C4}:=
C_5\quad\substack{4-\\ \longrightarrow }\quad C_6
\quad
\textrm{ , }
\quad
\cF_{E4}:=
E_3\quad\substack{4-\\ \longrightarrow }\quad C_5
\quad
\textrm{ , }
\quad
\cG_{C5}:=
C_5\quad\substack{5-\\ \longrightarrow }\quad C_5
$$
or only their image under the involution $\phi$. 
\end{corollary}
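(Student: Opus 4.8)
The strategy is to combine the reductions already available: Corollary~\ref{CoroReductionTointermediateGraph} to restrict to the accessible graph $\cA$ with continued fraction entries at most $5$, Proposition~\ref{PropReductionToIntermediateGraph} to work inside the intermediate graph $\cI$, and Lemma~\ref{LemCutFzero} to cut off the vertex $F_0$ (and with it $C_4$), after which the surviving part of $\cI$ splits into two components exchanged by the involution $\psi$. First, since $\eta_1<\eta_2<\eta_3$, both Corollary~\ref{CoroReductionTointermediateGraph} and Proposition~\ref{PropReductionToIntermediateGraph} apply to $(X_j,\alpha)$: we get $\support(X_j,\alpha)\subset\cA$, we have $a_n\le5$ eventually, and eventually the path $(X_j,\alpha)$ is a concatenation of arrows of $\cI$. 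Hence only arrows of $\cI$ whose label $a$ satisfies $a\le5$ can occur infinitely often.

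Next I would eliminate the vertices $C_4$ and $F_0$. Going through the list of the $30$ arrows of $\cI$ (Lemma~\ref{LemInducingIntermediateGraph}), the only arrows of $\cI$ leaving $C_4$, and the only arrows of $\cI$ entering $F_0$, are $\cG_{CF}$ and $\cG'_{CF}$ (here one uses that $R\cdot C_4=F_0$ and that $T$ fixes $F_0$, so that every operation out of $C_4$ lands in $F_0$). By Lemma~\ref{LemCutFzero}, $L^{inf}_5$ of each of $\cG_{CF}$ and $\cG'_{CF}$ is at least $\eta_1$, so under the hypothesis $L(X_j,\alpha)<\eta_1$ neither of these two arrows can occur infinitely often. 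Since every visit of the (infinite) path to $C_4$ is followed by an arrow out of $C_4$, and every visit to $F_0$ is preceded by an arrow into $F_0$, it follows that $C_4$ and $F_0$ are visited only finitely many times.

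Therefore, eventually the path uses only arrows of $\cI$ with label at most $5$ that meet neither $C_4$ nor $F_0$, and inspection of the list shows these are exactly the twelve arrows $\cF_1,\cF_2,\cF_3,\cF_{C4},\cF_{E4},\cG_{C5}$ together with their images $\cF'_1,\cF'_2,\cF'_3,\cF'_{C4},\cF'_{E4},\cG'_{C5}$ under $\psi$. The subgraph of $\cI$ spanned by these twelve arrows is disconnected: the six unprimed arrows are supported on the vertex set $\{C_5,C_6,E_3\}$, the six primed ones on $\{C_2,C_3,E_2\}$, and no arrow of the list joins these two sets. Since the tail of the path lies in this disconnected subgraph, it is confined to one of the two components, so the arrows occurring infinitely often are contained in one of the two lists of six; this is exactly the statement. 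I expect the only point requiring genuine care to be the bookkeeping in the second and third steps: checking from the explicit description of $\cI$ that deleting $\cG_{CF},\cG'_{CF}$ really isolates $C_4$ and $F_0$, and that the remaining twelve arrows split cleanly into the two $\psi$-conjugate components with no connecting arrow. Everything else is an immediate consequence of the already established results.
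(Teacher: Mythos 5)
Your proposal is correct and follows essentially the same route as the paper: invoke Corollary~\ref{CoroReductionTointermediateGraph} to get $\support(X_j,\alpha)\subset\cA$ and $a_n\le5$ eventually, use Lemma~\ref{LemCutFzero} to exclude the only arrows of $\cI$ into $F_0$ (hence also the vertex $C_4$, all of whose outgoing arrows end in $F_0$), and conclude by noting that the surviving twelve arrows split into the two $\psi$-conjugate components. Your write-up merely makes explicit the disconnection argument that the paper leaves implicit in its final sentence.
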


\begin{proof}
According to Lemma \ref{LemCutFzero}, any arrow of the intermediate graph $\cI$ which ends in $F_0$ cannot be contained infinitely many times in $(X_j,\alpha)$, thus $F_0$ does not belong to $\support(X_j,\alpha)$. But all arrows of the intermediate graph $\cI$ starting at $C_4$ end in $F_0$, thus $C_4$ does not belong to $\support(X_j,\alpha)$. The statement follows recalling Lemma \ref{lemCutEntriesAbove5}.
\end{proof}

According to Corollary \ref{CoroDisconnettingIntermediateGraph}, in order to finish the  proof of Proposition \ref{propReductionToSmallgraph}, it is enough to exclude the elementary operation $C_5\quad\substack{5-\\ \longrightarrow }\quad C_5$, or in other words the arrow $\cG_{C5}$. This follows from the next Lemma.

\begin{lemma}
\label{lemCutEntriesAbove4inSmallGraph}
We have
$$
L_5^{inf}
\big(
C_5\quad\substack{5-\\ \longrightarrow }\quad C_5
\big)
\geq
\eta_1
$$
\end{lemma}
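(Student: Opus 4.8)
The plan is to analyze the length-two blocks of the accessible graph $\cA$ which contain the elementary operation $C_5\quad\substack{5-\\ \longrightarrow }\quad C_5$ (i.e. the arrow $\cG_{C5}$), exactly as in the proof of Lemma \ref{LemCutFzero}. First I would list all arrows of the intermediate graph $\cI$ that can precede or follow $\cG_{C5}$. Since $\cG_{C5}$ ends at $C_5$, it can be followed by any arrow of $\cI$ starting at $C_5$ and compatible with the sign convention (an arrow labelled $a-$ followed by an arrow labelled $a'+$, since consecutive continued-fraction operations alternate sign); the arrows of $\cI$ starting at $C_5$ are $\cF_1$ (really starting at $C_6$, not relevant here), $\cF_2=C_5\substack{2+}C_5$, $\cF_{C4}=C_5\substack{4-}C_6$, $\cG_{CC}=C_5\substack{1+}C_4$, $\cG_{C5}=C_5\substack{5-}C_5$, $\cG_{6C}=C_5\substack{6-}C_4$, $\cG_{C6}=C_5\substack{6+}C_2$. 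Among these, $a\le 5$ and correct-sign compatibility (the operation before $\cG_{C5}$ is of type $+$, hence the one after is of type $-$; wait — $\cG_{C5}$ itself is of type $-$, so what precedes it is of type $+$ and what follows is of type $+$). So the admissible successors are $\cF_2$ and $\cG_{CC}$ (both $+$ and with $a\le 5$), and $\cG_{C6}$ ($6+$, excluded by $a\le 5$). Similarly I would determine the admissible predecessors of $\cG_{C5}$, namely arrows of $\cI$ ending at $C_5$ of type $+$ with $a\le 5$: from the arrow list these are $\cF_{E4}=E_3\substack{4-}C_5$ (wrong sign), $\cG_{FC}=F_0\substack{1+}C_5$, $\cG_{E6}'=E_2\substack{6+}C_5$ (excluded), $\cG_{F6}=F_0\substack{6-}C_5$ (wrong sign or excluded). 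So the only relevant predecessor is $\cG_{FC}=F_0\substack{1+}C_5$, but $F_0$ was already excluded in the course of Lemma \ref{LemCutFzero} / Corollary \ref{CoroDisconnettingIntermediateGraph}; alternatively I would simply build length-two and length-three blocks directly and bound them.

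Concretely, I would form the candidate blocks around $\cG_{C5}$ that survive the constraint $a_n\le 5$ and $\support\subset\cA$, prolong each to length three if needed, and apply Lemma \ref{LemMinorationblocks} with $M=5$ to each, choosing the index $i$ that sits on the $5$-entry of the $\cG_{C5}$ operation so that the numerator reads $[5,\dots,\overline{c,d}]+\dots$ (or $[\dots,5,\dots]$) and the denominator is $m^2(R\cdot g(\dots)\cdot Y_k)$. Since the starting point of $\cG_{C5}$ is $C_5$ with $m(C_5)=2$, we get a denominator $m^2=4$ in the relevant term, and a $5$ appearing as a continued-fraction entry pushes the numerator above the threshold: comparing with $\eta_1 = 7\cdot\frac{[1,4,2,\overline{1,5}]+5+[1,5,1,\overline{1,5}]}{4}$, the $5$ in the middle slot of $D(n,i,\alpha)$ already gives $7\cdot\frac{[\,\cdot\,]+5+[\,\cdot\,]}{4}$, and I would check by Lemma \ref{LemmaMaxMinCantorset} that the extremal completions of the two tails (using entries $\le 5$) produce a value $\ge\eta_1$. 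One should take care to also invoke Lemma \ref{LemmaInvarianceBlocksInvolution} so that it suffices to treat one of $\cG_{C5}$ and $\cG'_{C5}$, and to handle the sign/parity bookkeeping of Lemma \ref{LemMinorationblocks} (the four cases for $(c,d),(e,f)$) correctly for the particular position of the $5$.

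The main obstacle I expect is the careful case analysis of which length-two (or length-three) blocks containing $\cG_{C5}$ actually occur in $\cA$ under the standing hypotheses, together with getting the parity conventions in Lemma \ref{LemMinorationblocks} exactly right so that the continued-fraction extremization goes the correct direction; a secondary subtlety is that one prolongation of $\cG_{C5}$ might meet a vertex ($C_4$ or $F_0$, or an $E$-vertex whose only entering arrow already forces a large $D$-value via Lemma \ref{lemObstructionForMultiplicity1}) which was excluded or constrained in earlier lemmas, so one has to quote those exclusions in the right order. Once the finitely many surviving blocks are pinned down, each numerical inequality $L^{inf}_5(\cdot)\ge\eta_1$ is a routine continued-fraction estimate of exactly the same flavor as those already carried out in the proofs of Lemma \ref{lemCutEntriesAbove5} and Lemma \ref{LemCutFzero}, so I would present it compactly by exhibiting the worst block and the single application of Lemma \ref{LemMinorationblocks} that certifies the bound.
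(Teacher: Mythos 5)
Your combinatorial setup is broadly in the right spirit (pin down the neighbours of $\cG_{C5}$ and reduce to finitely many blocks), though the enumeration is off: you omit $\cF_2=C_5\quad\substack{2+\\ \longrightarrow }\quad C_5$ from the list of type-$+$ arrows ending at $C_5$, and this is in fact the \emph{only} admissible predecessor (and also the only admissible successor) once Corollary \ref{CoroDisconnettingIntermediateGraph} is invoked, so every occurrence of $\cG_{C5}$ sits inside the block $C_5\quad\substack{2+\\ \longrightarrow }\quad C_5\quad\substack{5-\\ \longrightarrow }\quad C_5\quad\substack{2+\\ \longrightarrow }\quad C_5$. The genuine gap is in the quantitative step. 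You propose to certify the bound with the Gauss term in which the $5$ sits in the middle slot, over the denominator $m^2(R\cdot C_5)=4$. That term equals
$$
7\cdot\frac{[2,a_{N-1},\dots]+5+[2,a_{N+3},\dots]}{4},
$$
and since both tails begin with the entry $2$ it is bounded above by $7\cdot\frac{5+2[2,\overline{5,1}]}{4}<7\cdot\frac{6}{4}=10.5$, which is below $\phi_1$, let alone $\eta_1=11.655\ldots$. The Farey terms $[i,\dots]+[5-i,\dots]$ inside the $5$-entry are even smaller. So no term attached to the $\cG_{C5}$ arrow itself, with denominator $4$, can ever reach $\eta_1$; the check you defer to Lemma \ref{LemmaMaxMinCantorset} would fail.

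What actually makes the lemma work (and what the paper does) is to look at the Farey approximation $i=1$ \emph{inside the preceding $\cF_2$ entry} $a_N=2$: the intermediate vertex is $g(a_1,\dots,a_{N-1},1)\cdot X_j=C_4$, and $m^2(R\cdot C_4)=1$, so the denominator drops to $1$ while the numerator becomes $[1,a_{N-1},\dots]+[1,5,2,a_{N+3},\dots]$, a sum of two tails each beginning with $1$ and hence each close to $0.83$--$0.85$. After using the graph to force $a_{N-1},a_{N+3}\in\{4,5\}$ and minimizing via Lemmas \ref{LemLexicographicOrder}/\ref{LemmaMaxMinCantorset}, this gives the lower bound $7\big([1,4,\overline{1,3}]+[1,5,2,4,\overline{1,3}]\big)=11.706\ldots>\eta_1$. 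Without switching to this multiplicity-one Farey term your plan cannot close, so as written the proposal does not prove the lemma.
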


\begin{proof}
Observe first that
$$
\eta_1<
7\cdot\big(
[1,5,2,4,\overline{1,3}]+[1,4,\overline{1,3}]
\big)
=11,706478\pm10^{-6}.
$$
If
$
L_5^{inf}
\big(
C_5\quad\substack{5-\\ \longrightarrow }\quad C_5
\big)
<\eta_1
$
then there exists $X_j\in\cO$ and $\alpha=[a_1,a_2,\dots]$ irrational with $1\leq a_n\leq5$ for any $n$ which generate a path containing
$
C_5\quad\substack{5-\\ \longrightarrow }\quad C_5
$
infinitely many times and such that $L^{inf}(X_j,\alpha)<\eta_1$. According to Corollary \ref{CoroDisconnettingIntermediateGraph}, the only elementary operations starting in $C_5$ that can appear infinitely many time in $(X_j,\alpha)$ are 
$
C_5\quad\substack{5-\\ \longrightarrow }\quad C_5
$
and
$
C_5\quad\substack{4-\\ \longrightarrow }\quad C_6
$, 
and similarly the only elementary operations ending in $C_5$ that can appear infinitely many times in $(X_j,\alpha)$ are
$
C_5\quad\substack{2+\\ \longrightarrow }\quad C_5
$
and
$
E_3\quad\substack{4-\\ \longrightarrow }\quad C_5
$.
Therefore, if the arrow
$
C_5\quad\substack{5-\\ \longrightarrow }\quad C_5
$
appears infinitely many time in the path generated by $(X_j,\alpha)$, then at any occurrence it also occurs the block
$$
C_5\quad\substack{2+\\ \longrightarrow }\quad
C_5\quad\substack{5-\\ \longrightarrow }\quad
C_5\quad\substack{2+\\ \longrightarrow }\quad
C_5.
$$
We apply Formula \eqref{eqFormulaLagrangeConstant}. Fix $\delta>0$ and consider $N$ such that
$
g(a_1,\dots,a_{N-1})\cdot X_j=C_5
$
and
$a_N=2$, $a_{N+1}=5$, $a_{N+2}=2$ and moreover
\begin{eqnarray*}
&&
L(X_j,\alpha)=
7\cdot
\limsup_{n\to\infty}
\max_{1\leq i\leq a_n}
\frac{D(n,i,\alpha)}
{m^2\big(R\cdot g(a_1,\dots,a_{n-1},i)\cdot X_j\big)}
>
\\
&&
7\cdot
\frac{D(N,1,\alpha)}
{m^2\big(R\cdot g(a_1,\dots,a_{N-1},1)\cdot X_j\big)}+\delta
\end{eqnarray*}
Since
$
g(a_1,\dots,a_{N-1})\cdot X_j=C_5
$,
we have
$
g(a_1,\dots,a_{N-1},1)\cdot X_j=C_4
$
and recalling that $m^2\big(R\cdot C_4\big)=1$ we have
$$
\frac{D(N,1,\alpha)}
{m^2\big(R\cdot g(a_1,\dots,a_{N-1},1)\cdot X_j\big)}
=
[1,a_{N-1},a_{N-2},\dots]+[1,5,2,a_{N+3},a_{N+4},\dots].
$$
According to the shape of the small graph $\cS$ the possible values for $a_{N+3}$ are $a_{N+3}=4$ and $a_{N+3}=5$. Similarly, the possible values for $a_{N-1}$ are $a_{N-1}=4$ and $a_{N-1}=5$. According to Lemma \ref{LemmaMaxMinCantorset} we have
$$
D(N,1,\alpha)
\geq
[1,4,a_{N-2},\dots]+[1,5,2,4,a_{N+4},\dots].
$$
Moreover, if $a_{N-1}=4$ and $a_{N+3}=4$, then we must have
$
g(a_1,\dots,a_{N-2})\cdot X_j=E_3
$
and
$
g(a_1,\dots,a_{N+3})\cdot X_j=C_6
$,
according to the shape of $\cS$. The only arrow of $\cS$ ending in $E_3$ is
$
C_6\quad\substack{1+\\ \longrightarrow }\quad E_3
$, therefore
$$
D(N,1,\alpha)
\geq
[1,4,1,a_{N-3},\dots]+[1,5,2,4,a_{N+4},\dots]
$$
with
$
g(a_1,\dots,a_{N-3})\cdot X_j=
g(a_1,\dots,a_{N+3})\cdot X_j=C_6
$,
in other words we have the block
$$
C_6\quad\substack{1+\\ \longrightarrow }\quad
E_3\quad\substack{4-\\ \longrightarrow }\quad
C_5\quad\substack{2+\\ \longrightarrow }\quad
C_5\quad\substack{5-\\ \longrightarrow }\quad
C_5\quad\substack{2+\\ \longrightarrow }\quad
C_5\quad\substack{4-\\ \longrightarrow }\quad
C_6.
$$
According to the shape of the small graph $\cS$ and to Lemma \ref{LemmaMaxMinCantorset}, in order to minimize $D(N,1,\alpha)$ we must complete the block above repeating both in the past and in the future the block
$
C_6\quad\substack{1+\\ \longrightarrow }\quad
E_3\quad\substack{3-\\ \longrightarrow }\quad
C_6
$.
The Lemma follows recalling that $\delta>0$ is arbitrarily small and observing that we get
$$
L(X_j,\alpha)
>
7\cdot D(N,1,\alpha)+\delta
\geq
7\cdot\big(
[1,4,\overline{1,3}]+[1,5,2,4,\overline{1,3}]\big)+\delta
>\eta_1,
$$
which is absurd.
\end{proof}

\subsection{Proof of Theorem \ref{TheoremReductionToSubshift}}
\label{Sec:EndProofTheoremREductionToSubshift}

The proof splits in two parts.
\subsubsection{Coding by the subshift}

Let $(X_j,\alpha)$ be data with $L(X_j,\alpha)<\eta_1$. According to Proposition \ref{propReductionToSmallgraph}, modulo replacing the path  by its image under the involution $\psi$,   the arrows that can appear infinitely many times in the path generated by $(X_j,\alpha)$,  are the five arrows
$\cF_1$, $\cF_2$, $\cF_3$, $\cF_{C4}$, $\cF_{E4}$ of the small graph $\cS$.  In particular, any path as above necessarily passes infinitely many times from the vertex $C_6$, thus it can be eventually decomposed as $H_0\ast H_1\ast H_2\dots$, where any $H_i$ is one of the two elementary loops $H_a$ and $H_b$ at $C_6$ which are defined below and represented in Figure \ref{FigureLoopsHaHb}.
\begin{eqnarray*}
&&
H_a:=
C_6\quad\substack{1+\\ \longrightarrow }\quad
E_3\quad\substack{4-\\ \longrightarrow }\quad
C_5\quad\substack{2+\\ \longrightarrow }\quad
C_5\quad\substack{4-\\ \longrightarrow }\quad
C_6
\\
&&
H_b:=
C_6\quad\substack{1+\\ \longrightarrow }\quad
E_3\quad\substack{3-\\ \longrightarrow }\quad
C_6.
\end{eqnarray*}

\begin{figure}[!ht]
\label{FigureLoopsHaHb}
\includegraphics[width=.2\textwidth]{small-graph.pdf}
\hspace{2.3mm}
\includegraphics[width=.2\textwidth]{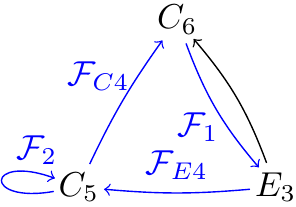}
\hspace{2.3mm}
\includegraphics[width=.2\textwidth]{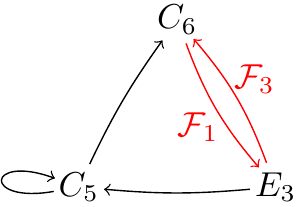}
\caption{The \emph{small graph} $\cS$ and the two elementary loops $H_a$ and $H_b$.\label{smallgraph}}
\end{figure}

One can also see that those paths which pass from $C_5$ just finitely many times eventually coincide with the periodic loop $(C_6,[\overline{1,3}])$, and we alredy know that $L(C_6,[\overline{1,3}])=\phi_1$. Therefore we consider without loss of generality the set $\Gamma$ of paths $\gamma$ starting at $C_6$, composed by the five arrows above, containing infinitely many times both the loops $H_a$ and $H_b$, and whose first loop is $H_0=H_a$. Consider the two finite words $a:=1,4,2,4$ and $b:=1,3$ and let $\sigma:\Xi_0\to\Xi_0$ be the map defined in the introduction, where $\Xi_0$ is the set of $\{a,b\}^\ZZ$ of those sequences $\xi=(\xi_i)_{i\in\ZZ}$ such that $\xi_0=a$ and also $\xi_i=a$ for arbitrarily big integers $i>0$. Define a map $\Pi:\Gamma\to\Xi_0$, where for any $\gamma\in\Gamma$ the sequence
$
(\xi_i)_{i\in\ZZ}=\Pi(\gamma)
$
is defined setting $\xi_i:=a$ for $i\leq0$ and for all positive integer $i$ setting
\begin{eqnarray*}
&&
\xi_i:=a
\textrm{ if }
H_i=H_a
\textrm{ and }
\\
&&
\xi_i:=b
\textrm{ if }
H_i=H_b,
\end{eqnarray*}
where
$
\gamma=H_1\ast H_2\ast H_3\ast\dots
$
is the decomposition of $\gamma$ in loops $H_i\in\{H_a,H_b\}$. Recall that for any sequence $\xi\in\Xi_0$ we write
$
[\xi]_{+}:=[1,4,\xi_1,\xi_2,\dots]
$
and
$
[\xi]_{-}:=[1,4,\xi_{-1},\xi_{-2},\dots]
$.
Consider the words $<a>:=4,2,4,1$ and $<b>:=3,1$, then define an operation $<\cdot>$ on the set of finite words $u=\xi_1,\dots,\xi_k$ in the letters $a,b$ setting
$$
<\xi_1,\dots,\xi_k>:=<\xi_1>,\dots,<\xi_k>.
$$
Observe that $1,<a>=1,4,2,4,1=a,1$ and $1,<b>=1,3,1=b,1$, thus for any finite word $u$ we have the identity
$$
1,<u>=u,1.
$$
Finally, given two sequences $(x_n)_{n\in\NN}$ and $(y_n)_{n\in\NN}$ we write $x_n\sim y_n$ if $x_n-y_n\to0$ for $n\to+\infty$.

\begin{lemma}
\label{LemCodingBySubshift}
Let $(X_j,\alpha)$ be data generating a path $\gamma\in\Gamma$ and set $\xi=\Pi(\gamma)$. Then at any occurrence of $\cF_2$, that is for any $n$ with
$
g(a_1,\dots,a_{n-1})\cdot X_j=C_5
$
and $a_n=2$ we have
$$
\frac{D(n,1,\alpha)}
{m^2\big(R\cdot g(a_1,\dots,a_{n-1},1)\cdot X_j\big)}
\sim
\big[\sigma^{k(n)}(\xi)\big]_{+}
+
\big[\sigma^{k(n)}(\xi)\big]_{-},
$$
where the positive integer $k(n)$ corresponds to the number of occurrences of the loop $H_a$ in the segment of the path $\gamma$ corresponding to $g(a_1,\dots,a_n)$. 
\end{lemma}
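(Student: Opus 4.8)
The plan is to evaluate, at an occurrence of $\cF_2$ (the elementary operation $T^2R\cdot C_5=C_5$, which is the third arrow of the loop $H_a$), the ratio $D(n,1,\alpha)/m^2\big(R\cdot g(a_1,\dots,a_{n-1},1)\cdot X_j\big)$ of Corollary \ref{cor:renormalizedformulafororbitB7}, and to recognise it as $[\sigma^{k(n)}(\xi)]_{+}+[\sigma^{k(n)}(\xi)]_{-}$ up to an error tending to $0$. First I would fix the bookkeeping between continued fraction digits and loops. Since $\gamma\in\Gamma$, after a finite transient the digit string $a_1,a_2,\dots$ is the left-to-right concatenation of the words $\xi_0,\xi_1,\xi_2,\dots$, with $\xi_i=a$ contributing $1,4,2,4$ and $\xi_i=b$ contributing $1,3$. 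The occurrence of $\cF_2$ is the third arrow of some $H_a$-loop $H_k$ (so $\xi_k=a$), hence for the index $n$ with $g(a_1,\dots,a_{n-1})\cdot X_j=C_5$ and $a_n=2$ one has $a_{n-2},a_{n-1},a_n,a_{n+1}=1,4,2,4$; the digits $a_{n-3},a_{n-4},\dots$ run backwards through the words of $H_{k-1},H_{k-2},\dots,H_0$ and then through the transient; the digits $a_{n+2},a_{n+3},\dots$ spell the words of $H_{k+1},H_{k+2},\dots$; and $n$ is odd, since $\cF_2$ carries the label $2+$. For the denominator, the first Farey step of the block $a_n=2$ sends $C_5=g(a_1,\dots,a_{n-1})\cdot X_j$ to $g(a_1,\dots,a_{n-1},1)\cdot X_j=TR\cdot C_5=C_4$ (arrow $\cG_{CC}$), and $R\cdot C_4=T^{-1}F_0=F_0$ (arrow $\cG_{CF}$ together with $T\cdot F_0=F_0$, the cusp $F$ having width $1$); as $m(F_0)=1$, the denominator equals $1$. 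Hence, by the definition of $D(n,i,\alpha)$ for $1\le i<a_n$ and $a_n-1=1$,
$$
\frac{D(n,1,\alpha)}{m^2\big(R\cdot g(a_1,\dots,a_{n-1},1)\cdot X_j\big)}=[1,a_{n-1},\dots,a_1]+[1,a_{n+1},a_{n+2},\dots].
$$

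Next I would identify the forward summand exactly. Since $a_{n+1}=4$ is the last letter of the word carried by $H_k$ and the subsequent digits spell $\xi_{k+1},\xi_{k+2},\dots$, in the word notation $[1,a_{n+1},a_{n+2},\dots]=[1,4,\xi_{k+1},\xi_{k+2},\dots]$. Recentring $\xi$ at its $k$-th letter, which is the $k(n)$-th occurrence of $a$, yields $\sigma^{k(n)}(\xi)$ with $(\sigma^{k(n)}\xi)_i=\xi_{k+i}$ for all $i$; therefore this forward summand equals $[1,4,(\sigma^{k(n)}\xi)_1,(\sigma^{k(n)}\xi)_2,\dots]=[\sigma^{k(n)}(\xi)]_{+}$.

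The backward summand is where the real work lies, and I expect it to be the main obstacle: one must keep track of the fact that reading the continued fraction backwards reverses each loop word, which is exactly what the operation $<\cdot>$ records. Using $a_{n-1}=4$ and $a_{n-2}=1$, write $[1,a_{n-1},\dots,a_1]=[1,4,1,a_{n-3},\dots,a_1]$; the tail $a_{n-3},a_{n-4},\dots,a_1$ then spells $<\xi_{k-1},\xi_{k-2},\dots,\xi_0>$ followed by the reversed transient, and the identity $1,<u>=u,1$ with $u=\xi_{k-1},\dots,\xi_0$ turns this into $[1,4,\xi_{k-1},\xi_{k-2},\dots,\xi_0,1,\dots]$, the trailing dots being the reversed transient. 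On the other hand $[\sigma^{k(n)}(\xi)]_{-}=[1,4,(\sigma^{k(n)}\xi)_{-1},(\sigma^{k(n)}\xi)_{-2},\dots]=[1,4,\xi_{k-1},\xi_{k-2},\dots,\xi_0,\xi_{-1},\xi_{-2},\dots]=[1,4,\xi_{k-1},\dots,\xi_0,a,a,a,\dots]$, since $\xi_i=a$ for $i\le0$. So the backward summand and $[\sigma^{k(n)}(\xi)]_{-}$ are two infinite continued fractions sharing the initial block of partial quotients $[1,4,\xi_{k-1},\xi_{k-2},\dots,\xi_0]$, whose length tends to $\infty$ as the considered occurrence of $\cF_2$ moves out along $\gamma$, i.e. as $k\to\infty$ (each loop word has length at least $2$, and there are $k$ of them). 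By the standard estimate that two continued fractions agreeing on the first $m$ partial quotients differ by an amount decaying exponentially in $m$, the backward summand is $\sim[\sigma^{k(n)}(\xi)]_{-}$ along the occurrences of $\cF_2$.

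Adding the forward and backward summands gives the claimed $D(n,1,\alpha)/m^2(\cdot)\sim[\sigma^{k(n)}(\xi)]_{+}+[\sigma^{k(n)}(\xi)]_{-}$. Everything outside the backward-summand analysis is a routine unwinding of the definitions of $D$, of the path $(X_j,\alpha)$, of $\sigma$ and of $[\cdot]_{\pm}$, with the digits and vertices read off from the small graph $\cS$ (Figure \ref{smallgraph}); the only additional care needed is to check that the single extra digit produced by $1,<u>=u,1$ and the finite transient perturb the value only by $o(1)$, which is subsumed in the common-prefix estimate above.
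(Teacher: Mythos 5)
Your proposal is correct and follows essentially the same route as the paper's proof: read off the digits $1,4,2,4$ around the occurrence of $\cF_2$, note $g(a_1,\dots,a_{n-1},1)\cdot X_j=C_4$ with $m^2(R\cdot C_4)=1$, identify the forward summand exactly with $[\sigma^{k(n)}(\xi)]_{+}$, and handle the backward summand via the identity $1,<u>=u,1$ together with the exponential common-prefix estimate for continued fractions. No substantive differences.
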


\begin{proof}
Observe that any occurrence in the path $\gamma$ of the arrow
$
\cF_2=C_5\quad\substack{2+\\ \longrightarrow }\quad C_5
$
corresponds to an occurrence of the block
$$
C_6\quad\substack{1+\\ \longrightarrow }\quad
E_3\quad\substack{4-\\ \longrightarrow }\quad
C_5\quad\substack{2+\\ \longrightarrow }\quad
C_5\quad\substack{4-\\ \longrightarrow }\quad
C_6\quad\substack{1+\\ \longrightarrow }\quad
E_3.
$$
Therefore, for any integer $n$ as in the statement we have $a_{n-2}=1$, $a_{n-1}=4$, $a_{n}=2$, $a_{n+1}=4$ and $a_{n+2}=1$. Moreover we have
$
g(a_1,\dots,a_{n-1},1)\cdot X_j=C_4
$,
and since $m^2\big(R\cdot C_4\big)=1$ then we get
$$
\frac{D(n,1,\alpha)}
{m^2\big(R\cdot g(a_1,\dots,a_{n-1},1)\cdot X_j\big)}
=
[1,4,1,a_{n-3},\dots,a_1]
+
[1,4,1,a_{n+3},\dots].
$$
Let $k=k(n)$ be the integer in the statement, which corresponds to the number of occurrences of the loop $H_a$ in the segment of the path $\gamma$ corresponding to $g(a_1,\dots,a_n)$. If
$
\gamma=H_0\ast H_1\ast H_2\dots
$
is the decomposition of $\gamma$ into loops $H_i\in\{H_a,H_b\}$, then the map $\Pi$ gives
$$
[1,4,1,a_{n+3},\dots]
=
[1,4,\xi_{k+1},\xi_{k+2}\dots]
=
\big[\sigma^{k}(\xi)\big]_{+}.
$$
With the same argument, and recalling that any finite word $u$ in the letters $a,b$ satisfies the identity $1,<u>=u,1$, we have
$$
[1,4,1,a_{n-3},\dots,a_1]
=
[1,4,1,<\xi_{k-1}>,\dots,<\xi_0>]
=
[1,4,\xi_{k-1},\dots,\xi_0,1]
\sim
\big[\sigma^{k}(\xi)\big]_{-}.
$$
\end{proof}

\subsubsection{End of the proof of Theorem \ref{TheoremReductionToSubshift}}

Let $(X_j,\alpha)$ be data with $L(X_j,\alpha)<\eta_1$ and assume without loss of generality that they generate a path $\gamma\in\Gamma$.

\medskip

We have $m^2\big(R\cdot C_6\big)=1$ and the only arrow starting at $C_6$ is $\cF_1$. According to Formula \eqref{eqFormulaLagrangeConstant} and Lemma \ref{LemmaMaxMinCantorset}, for the Gauss approximations corresponding to occurrences of $\cF_1$ we have the estimation
$$
\frac{D(n,1,\alpha)}
{m^2\big(R\cdot g(a_1,\dots,a_{n})\cdot X_j\big)}
<
[\overline{3,1}]+1+[\overline{3,1}]=
\phi_1=1,527524\pm10^{-6}.
$$

We have
$
m^2\big(R\cdot C_5\big)=
m^2\big(R\cdot E_3\big)=4
$
for all the endpoints of all the remaining four arrows $\cF_2$, $\cF_3$, $\cF_{C4}$ and $\cF_{E4}$. Moreover $1\leq a_n\leq4$ for any $n$ big enough, therefore for Gauss approximations Formula \eqref{eqFormulaLagrangeConstant} gives the estimation
$$
\frac{D(n,a_n,\alpha)}
{m^2\big(R\cdot g(a_1,\dots,a_{n})\cdot X_j\big)}
<
\frac{1+4+1}{4}=1,5.
$$

Since
$
m^2\big(C_2\big)=m^2\big(C_1\big)=m^2\big(C_0\big)=m^2\big(C_6\big)=1
$,
then Formula \eqref{eqFormulaLagrangeConstant} and Lemma \ref{LemmaMaxMinCantorset} give for the Farey approximations corresponding to occurrences of the arrows $\cF_{C4}$ and $\cF_{E4}$
\begin{eqnarray*}
&&
\frac{D(n,1,\alpha)}
{m^2\big(R\cdot g(a_1,\dots,a_{n})\cdot X_j\big)}
<
[1,\overline{1,4}]+[3,\overline{1,4}]<1
\\
&&
\frac{D(n,2,\alpha)}
{m^2\big(R\cdot g(a_1,\dots,a_{n})\cdot X_j\big)}
<
2[2,\overline{1,4}]<1
\\
&&
\frac{D(n,3,\alpha)}
{m^2\big(R\cdot g(a_1,\dots,a_{n})\cdot X_j\big)}
<
[3,\overline{1,4}]+[1,\overline{1,4}]<1.
\end{eqnarray*}

Again Formula \eqref{eqFormulaLagrangeConstant} and Lemma \ref{LemmaMaxMinCantorset} give for the Farey approximations corresponding to occurrences of the arrow $\cF_{3}$
\begin{eqnarray*}
&&
\frac{D(n,1,\alpha)}
{m^2\big(R\cdot g(a_1,\dots,a_{n})\cdot X_j\big)}
<
[1,1,\overline{3,1}]+[2,1,\overline{3,1}]<1
\\
&&
\frac{D(n,2,\alpha)}
{m^2\big(R\cdot g(a_1,\dots,a_{n})\cdot X_j\big)}
<
[2,1,\overline{3,1}]+[1,1,\overline{3,1}]<1.
\end{eqnarray*}

So far we proved that for all Gauss approximations and all Farey approximations but those corresponding to the occurrences of the arrow $\cF_2$ we have
$$
\frac{D(n,1,\alpha)}
{m^2\big(R\cdot g(a_1,\dots,a_{n})\cdot X_j\big)}
<
\phi_1\sim1,527524\pm10^{-6}.
$$
According to Lemma \ref{LemCodingBySubshift} and to Lemma \ref{LemmaMaxMinCantorset}, for any Farey approximation corresponding to an occurrence of $\cF_2$, that is for those $n$ such that $g(a_1,\dots,a_{n-1})=C_5$ and $a_n=2$, we have
\begin{eqnarray*}
&&
\frac{D(n,1,\alpha)}
{m^2\big(R\cdot g(a_1,\dots,a_{n-1},1)\cdot X_j\big)}
=
[1,4,1,a_{n-3},\dots,a_1]
+
[1,4,1,a_{n+3},\dots]
\\
&&
\geq
[1,4,\overline{1,3}]
+
[1,4,\overline{1,3}]
=
1,654653\pm10^{-6}.
\end{eqnarray*}
Theorem \ref{TheoremReductionToSubshift} is proved, since the latter is the bigger of all the values computed above.

\section{Proof Theorem \ref{TheoremMainTheorem}}
\label{Sec:ProofOfMainTheorem}

In this paragraph we prove Theorem \ref{TheoremMainTheorem} using the function $L^\sigma:\Xi_0\to\RR_+$ in Theorem \ref{TheoremReductionToSubshift}. Let $\KK:=L^\sigma(\Xi_0)$ be the set of values of the function. We believe that Theorem \ref{TheoremMainTheorem} can be improved proving that $\KK$ is a cantor set. Therefore our proof is presented as an iterative construction of a Cantor set as $\KK:=\bigcap_{n=1}^\infty\KK(n)$ where each $\KK(n)$ is a countable union of closed intervals satisfying $\KK(n+1)\subset\KK(n)$ for any $n\geq1$. The two families $\KK(1)$ and $\KK(2)$ are described in \S \ref{SecFirstGenerationCantor} and \S \ref{SecSecondGenerationCantor} respectively. Finally in \S~\ref{EndOfTheProofOfMainTheorem} we briefly rephrase results as in the statement of Theorem \ref{TheoremMainTheorem}.

\subsection{Lexicographic order on finite words in the letters $a,b$}
\label{Sec:LexicographicOrder}

Let $u$ be a finite word in the letters $a,b$. Let $u^{(k)}$ be the finite word obtained as concatenation $u\ast\dots\ast u$ of $k$ copies of the word $u$. Let $u^{(\infty,+)}$ and $u^{(\infty,-)}$ be respectively the positive infinite word and the negative infinite word obtained concatenating periodically the word $u$. Let $u^\infty$ be the periodic infinite word whose period is $u$, so that for instance we have
$
u^\infty=u^{(\infty,-)}\ast u^{(\infty,+)}
$.
Let $v$ be an other finite word in the letter $a,b$. Let
$
\overline{v^\infty uv^\infty}
$
be the infinite word obtained by concatenation
$$
\overline{v^\infty uv^\infty}
:=
u^{(\infty,-)}
\ast
v\ast u\ast v
\ast
v^{(2)}\ast u\ast v^{(2)}
\ast\dots\ast
v^{(k)}\ast u\ast v^{(k)}
\ast\dots,
$$
that is, the infinite word whose future is the concatenation of blocks $v^{(k)}\ast u\ast v^{(k)}$ with increasing values of $k$ and whose past is the half-periodic word $u^{(\infty,-)}$. Actually, since we will consider iterates $\sigma^n(\xi)$ with $n\to+\infty$ of sequences $\xi\in\Xi_0$, the past of $\xi$ does not affect $L^\sigma(\xi)$, and thus in particular one could define $\overline{v^\infty uv^\infty}$ replacing $u^{(\infty,-)}$ by any other half-infinite sequence in the letters $a,b$.

\medskip

We first give a numerical result, which depends on the explicit values of the finite words $a,b$.

\begin{lemma}
\label{LemComparisonAandB}
We have the inequalities
\begin{eqnarray*}
&&
2[a,b^{(\infty,+)}]
>
[b,a^{(\infty,+)}]
+
[a^{(\infty,+)}]
\\
&&
2[b,a^{(\infty,+)}]
<
[a,b^{(\infty,+)}]
+
[b^{(\infty,+)}].
\end{eqnarray*}
\end{lemma}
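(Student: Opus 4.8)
The plan is to make both sides of the two displayed inequalities fully explicit as quadratic surds and then verify the resulting comparisons by a direct computation. Since $a=1,4,2,4$ and $b=1,3$, the four quantities appearing in the statement are $[a^{(\infty,+)}]=[\overline{1,4,2,4}]$, $[b^{(\infty,+)}]=[\overline{1,3}]$, together with the two values $[a,b^{(\infty,+)}]=[1,4,2,4,\overline{1,3}]$ and $[b,a^{(\infty,+)}]=[1,3,\overline{1,4,2,4}]$, obtained by prepending the block $a$, respectively $b$, to the periodic continued fractions above. A short continued-fraction computation (composing the elementary maps $x\mapsto 1/(c+x)$) gives the two Möbius maps
\[
[1,4,2,4,x]=\frac{9x+40}{11x+49},\qquad [1,3,x]=\frac{x+3}{x+4},
\]
each of which is increasing on $(0,1)$, since the associated $2\times2$ integer matrices have determinant $9\cdot 49-40\cdot 11=1$ and $1\cdot 4-3\cdot 1=1$.

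Next I would compute the two periodic values as quadratic irrationals by solving the fixed-point equations of these maps. Writing $\beta:=[\overline{1,3}]$, the relation $\beta=(\beta+3)/(\beta+4)$ gives $\beta^{2}+3\beta-3=0$, hence $\beta=\tfrac12(\sqrt{21}-3)$; writing $\mu:=[\overline{1,4,2,4}]$, the relation $\mu=(9\mu+40)/(11\mu+49)$ gives $11\mu^{2}+40\mu-40=0$, hence $\mu=\tfrac1{11}(2\sqrt{210}-20)$. Substituting, and using that prepending a finite block to a half-infinite word corresponds to applying the associated Möbius map to the value of that word, the two inequalities of the Lemma become
\begin{eqnarray*}
&& 2\cdot\frac{9\beta+40}{11\beta+49}\;>\;\frac{\mu+3}{\mu+4}+\mu,\\
&& 2\cdot\frac{\mu+3}{\mu+4}\;<\;\frac{9\beta+40}{11\beta+49}+\beta,
\end{eqnarray*}
which are explicit algebraic inequalities in $\sqrt{21}$ and $\sqrt{210}=\sqrt{10}\,\sqrt{21}$.

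Finally I would verify these two inequalities. Numerically one has $\beta=0.79129\pm10^{-5}$, $\mu=0.816614\pm10^{-6}$, $(9\beta+40)/(11\beta+49)=0.816606\pm10^{-6}$ and $(\mu+3)/(\mu+4)=0.792386\pm10^{-6}$; hence the left side of the first inequality is $\approx 1.63321$ against a right side $\approx 1.60900$, while the left side of the second is $\approx 1.58477$ against a right side $\approx 1.60789$. Both inequalities therefore hold with a margin exceeding $2\cdot10^{-2}$, so an evaluation to four decimal places with controlled rounding is conclusive; alternatively one clears denominators and reduces each inequality to a sign condition on an explicit element of the biquadratic field $\QQ(\sqrt{10},\sqrt{21})$, which can be checked exactly. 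There is no conceptual obstacle here: the entire proof is computational, and the only point requiring care is the bookkeeping that identifies each prepended block with the correct Möbius map acting on the numerical value of the corresponding (half-)infinite word.
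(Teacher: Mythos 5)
Your proposal is correct and is essentially the paper's own argument: the paper likewise reduces the lemma to comparing the four numerical values $[b^{(\infty,+)}]\approx0.79128784$, $[b,a^{(\infty,+)}]\approx0.79238557$, $[a,b^{(\infty,+)}]\approx0.81660638$, $[a^{(\infty,+)}]\approx0.81661395$, which you recover (correctly) via the M\"obius maps and fixed-point equations. Your extra detail on the quadratic surds and the exact check in $\QQ(\sqrt{10},\sqrt{21})$ only makes explicit what the paper leaves implicit.
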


\begin{proof}
The Lemma just follows comparing the values
\begin{eqnarray*}
&&
[b^{(\infty,+)}]=0,79128784 \pm 10^{-8}
\\
&&
[b,a^{(\infty,+)}]=0,79238557 \pm 10^{-8}
\\
&&
[a,b^{(\infty,+)}]=0,81660638 \pm 10^{-8}
\\
&&
[a^{(\infty,+)}]=0,81661395 \pm 10^{-8}.
\end{eqnarray*}
\end{proof}

\begin{lemma}
[Lexicographic Order]
\label{LemLexicographicOrder}
Let $u$ be a finite word in the letters $a,b$. For any positive infinite words $\omega$ and $\omega'$ in the letters $a,b$ we have the strict inequality
$$
[1,4,u,b,\omega]<[1,4,u,a,\omega'].
$$
\end{lemma}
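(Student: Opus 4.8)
The plan is to reduce the comparison of two continued fractions with a common finite prefix $1,4,u$ to a comparison of the two ``tails'' $b,\omega$ and $a,\omega'$, and then to use the numerical estimates recorded in Lemma~\ref{LemComparisonAandB} to decide the sign. Recall the elementary monotonicity property of continued fractions: for a fixed finite prefix $c_1,\dots,c_m$ and two infinite words $\tau,\tau'$, the value $[c_1,\dots,c_m,\tau]$ is larger than $[c_1,\dots,c_m,\tau']$ if and only if $[\tau]>[\tau']$ when $m$ is even, and if and only if $[\tau]<[\tau']$ when $m$ is odd (this is the same principle already used repeatedly through Lemma~\ref{LemmaMaxMinCantorset}). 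Here the prefix is $1,4,u$; since $u$ is a concatenation of the blocks $a=1,4,2,4$ (length $4$) and $b=1,3$ (length $2$), the word $u$ has even length, so $1,4,u$ has even length. Hence the inequality $[1,4,u,b,\omega]<[1,4,u,a,\omega']$ is equivalent to
$$
[b,\omega]<[a,\omega'].
$$

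So the first step is to record this reduction carefully: apply the monotonicity lemma with the even prefix $1,4,u$, noting $|a|=4$, $|b|=2$ so $|u|$ is even regardless of how many $a$'s and $b$'s it contains. The second step is to bound $[b,\omega]$ from above and $[a,\omega']$ from below, uniformly over all admissible tails. For the upper bound on $[b,\omega]=[1,3,\omega]$: since the first letter of $b$ is $1$ and $b$ has odd length $2$, adding more entries after $b$ moves the value in a controlled direction; by Lemma~\ref{LemmaMaxMinCantorset} applied to words in $\{a,b\}$, the supremum of $[b,\omega]$ over all positive words $\omega$ in the letters $a,b$ is attained in the limit $\omega\to a^{(\infty,+)}$ (the letter $a$ begins with $1,4$, hence is ``smaller'' than $b=1,3$ at the relevant position, in the sense that $[a^{(\infty,+)}]>[b^{(\infty,+)}]$ by Lemma~\ref{LemComparisonAandB}), giving $[b,\omega]\le [b,a^{(\infty,+)}]$. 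Symmetrically, for the lower bound on $[a,\omega']=[1,4,2,4,\omega']$: since $a$ has even length $4$ and begins with $1$, the infimum of $[a,\omega']$ over positive words $\omega'$ in $\{a,b\}$ is attained in the limit $\omega'\to b^{(\infty,+)}$, giving $[a,\omega']\ge [a,b^{(\infty,+)}]$. (One must be slightly careful about whether the extreme values are actually attained or only approached; since we only need a strict inequality and the numerical gap below is comfortably large, it suffices to know $[b,\omega]<[b,a^{(\infty,+)}]+\varepsilon$ and $[a,\omega']>[a,b^{(\infty,+)}]-\varepsilon$ for $\varepsilon$ small.)

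The third and final step is numerical: combine the two bounds to get $[b,\omega]<[b,a^{(\infty,+)}]$ and $[a,\omega']\ge[a,b^{(\infty,+)}]$, and then invoke the numerics of Lemma~\ref{LemComparisonAandB}, which give
$$
[b,a^{(\infty,+)}]=0{,}79238557\pm10^{-8}
<
0{,}81660638\pm10^{-8}=[a,b^{(\infty,+)}].
$$
Hence $[b,\omega]<[b,a^{(\infty,+)}]<[a,b^{(\infty,+)}]\le[a,\omega']$, which is exactly $[b,\omega]<[a,\omega']$, and by the reduction of the first step this yields $[1,4,u,b,\omega]<[1,4,u,a,\omega']$. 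The main obstacle, such as it is, is purely bookkeeping: one has to keep straight the parity of each prefix (so that the direction of each monotonicity inequality is correct) and verify that the extreme tails one needs are genuinely the worst cases among words drawn from $\{a,b\}$ rather than arbitrary words. Since only the letters $a,b$ occur, Lemma~\ref{LemmaMaxMinCantorset} (in the $M$-bounded form, here effectively with the two-symbol alphabet) handles this uniformly, and the numerical separation in Lemma~\ref{LemComparisonAandB} is large enough ($\approx 0.024$) that no delicate estimate is required.
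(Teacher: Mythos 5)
Your proof is correct and follows essentially the same route as the paper: reduce to the comparison of the tails $[b,\omega]<[a,\omega']$ using that the prefix $1,4,u$ has even length (the paper phrases this as the homography $g([\omega''])=[1,4,u,\omega'']$ being increasing), then sandwich $[b,\omega]\leq[b,a^{(\infty,+)}]<[a,b^{(\infty,+)}]\leq[a,\omega']$ via the numerical values in Lemma~\ref{LemComparisonAandB}. The only cosmetic difference is that you spell out the parity bookkeeping and the extremality of the periodic tails in more detail than the paper does.
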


\begin{proof}
Let $g\in\psltwoz$ be the homography such that
$
g\big([\omega'']\big)=[1,4,u,\omega'']
$
for any positive infinite word $\omega''$. The Lemma corresponds to prove that
$$
g\big([b,\omega]\big)<g\big([a,\omega']\big)
$$
for any positive infinite words $\omega$ and $\omega'$. Since $g$ is increasing monotone, the last inequality is equivalent to
$[b,\omega]<[a,\omega']$, which follows from Lemma \ref{LemComparisonAandB} observing that
$$
[b,\omega]
\leq
[b,a^{(\infty,+)}]
<
[a,b^{(\infty,+)}]
\leq
[a,\omega'].
$$
\end{proof}

In the following we will use frequently the Lemma below, whose simple proof is left to the reader.

\begin{lemma}
\label{lemdistortionhomography}
Let $g:\RR_+\to\RR_+$ be a concave and increasing monotone function. If $x_1,x_2,x_3,x_4$ are points in $\RR_+$ such that
\begin{eqnarray*}
&&
x_1=\min_{i=1,2,3,4} x_i
\\
&&
x_4=\max_{i=1,2,3,4} x_i
\\
&&
x_2+x_3>x_1+x_4.
\end{eqnarray*}
Then we have
$$
g(x_2)+g(x_3)>g(x_1)+g(x_4).
$$
\end{lemma}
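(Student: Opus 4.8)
The plan is to reduce the statement to two elementary properties of a concave increasing function: that it lies weakly above each of its chords, and that it is \emph{strictly} monotone. First I would record that the hypotheses force $x_1<x_4$: if $x_1=x_4$ then all four points coincide and $x_2+x_3=x_1+x_4$, contradicting the strict inequality $x_2+x_3>x_1+x_4$. Hence $x_4-x_1>0$, and since $x_2,x_3\in[x_1,x_4]$ I may write $x_i=\lambda_i x_1+(1-\lambda_i)x_4$ with $\lambda_i:=(x_4-x_i)/(x_4-x_1)\in[0,1]$ for $i=2,3$. Note that throughout only the segment $[x_1,x_4]\subset\RR_+$ is used, so every point involved stays in the domain of $g$.

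Next I would apply concavity to $g(x_2)$ and to $g(x_3)$ and add the two inequalities, which gives
$$
g(x_2)+g(x_3)\ \ge\ (\lambda_2+\lambda_3)\,g(x_1)+\bigl(2-\lambda_2-\lambda_3\bigr)\,g(x_4).
$$
The one computation that matters is that $\lambda_2+\lambda_3=(2x_4-x_2-x_3)/(x_4-x_1)<1$, which is nothing but a rewriting of the hypothesis $x_2+x_3>x_1+x_4$. Setting $\mu:=\lambda_2+\lambda_3\in[0,1)$ and regrouping, the right-hand side equals
$$
\bigl[g(x_1)+g(x_4)\bigr]+(1-\mu)\bigl[g(x_4)-g(x_1)\bigr].
$$
Finally, since $g$ is strictly increasing and $x_1<x_4$ we have $g(x_4)-g(x_1)>0$, and $1-\mu>0$; hence the correction term is strictly positive and $g(x_2)+g(x_3)>g(x_1)+g(x_4)$, as claimed.

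There is no genuine obstacle here—the argument is only a few lines—and the single point that requires attention is the \emph{strictness} of the conclusion. The concavity step alone yields only a weak inequality, so the strict gap must be produced by combining the strict hypothesis $x_2+x_3>x_1+x_4$ (which forces $\mu<1$ strictly) with the strict monotonicity of $g$ together with $x_1<x_4$ (which forces $g(x_1)<g(x_4)$ strictly). Everything else—verifying $\lambda_i\in[0,1]$, the algebraic regrouping, the degenerate case $x_1=x_4$—is routine and can be left essentially to the reader, as the paper indicates.
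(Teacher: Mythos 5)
Your proof is correct. The paper leaves this lemma to the reader, so there is no argument to compare against, but yours is the natural one: express $x_2,x_3$ as convex combinations of $x_1,x_4$, add the two concavity inequalities, and observe that the hypothesis $x_2+x_3>x_1+x_4$ is exactly the statement $\lambda_2+\lambda_3<1$, so the excess weight on $g(x_4)$ over $g(x_1)$ produces the strict gap. You are also right to flag that strictness requires reading ``increasing monotone'' as \emph{strictly} increasing (a weakly increasing concave function constant on $[x_1,x_4]$ would give equality); this reading is consistent with every application in the paper, where $g$ is a homography with positive coefficients and hence strictly increasing on $\RR_+$.
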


Let $u$ be any finite word in the letters $a,b$ and observe that the assumption of Lemma \ref{lemdistortionhomography} are satisfied by the homography $g$ such that
$
g\big([\omega]\big)=[1,4,u,\omega]
$
for any positive infinite words $\omega$, indeed $g$ is increasing as element of $\psltwoz$, moreover its pole $g^{-1}(\infty)$ is negative, since all coefficients of $g$ are positive, thus $g$ is positive and concave on $\RR_+$.

\begin{lemma}
\label{LemCenteringInTheMiddle}
Consider a pair of integers $i,j\in\NN$ such that either $i=j$ or $|i-j|=1$. Then for any pair of integers $i',j'\in\NN$ with $|i'+j'|=|i+j|$ and $|i'-j'|>|i-j|$ and for any positive infinite words $\omega$ and $\omega'$ in the letters $a,b$ we have the strict inequality
$$
[1,4,a^{(i)},b,\omega]+
[1,4,a^{(j)},b,\omega']
>
[1,4,a^{(i')},b,\omega]+
[1,4,a^{(j')},b,\omega']
$$
\end{lemma}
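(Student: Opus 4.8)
The plan is to work with the continued-fraction homographies $g(x):=[1,4,x]$ and $h(x):=[1,4,2,4,x]$, associated respectively to the word $1,4$ and to the word $a=1,4,2,4$, so that $[1,4,a^{(k)},b,\omega]=g\big(h^{k}([b,\omega])\big)$ for every $k\ge 0$. As in the remark following Lemma~\ref{lemdistortionhomography}, $g$, $h$ and all the iterates $h^{k}$ are increasing and concave on $\RR_+$, their poles being negative. Computing the matrix of the word $a$ gives $h(x)=(9x+40)/(11x+49)$, hence $h'(x)=(11x+49)^{-2}\le 1/2401<1$ for every $x\ge 0$; since $h$ maps $\RR_+$ into $\RR_+$, the chain rule yields $(h^{s})'(x)=\prod_{\ell<s}h'\big(h^{\ell}(x)\big)<1$ on $\RR_+$, so for each integer $s\ge1$ the function $\Psi_s(x):=h^{s}(x)-x$ is \emph{strictly decreasing on $\RR_+$}. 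Finally, $h$ has its fixed point $\tau:=[\overline{1,4,2,4}]$ in $(0,1)$ and a negative companion fixed point, so $h$ maps $(0,\tau)$ into itself and $h(y)>y$ on $(0,\tau)$; in particular the iterates $h^{k}(y)$ with $y\in(0,\tau)$ form a strictly increasing sequence.

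The first real step is to pin down the base points $\beta:=[b,\omega]$ and $\beta':=[b,\omega']$ together with a separation estimate. Since $[b,\omega]=[1,3,[\omega]]=(3+[\omega])/(4+[\omega])$ with $[\omega]\in(0,1)$, both $\beta$ and $\beta'$ lie in $P:=[3/4,4/5]\subset(0,\tau)$, and the crucial fact is
$$
\sup P=\tfrac45<\tfrac{187}{229}=h\big(\tfrac34\big)=h(\inf P),
$$
an elementary inequality (equivalently, a mild consequence of Lemma~\ref{LemComparisonAandB}, which records the sharper version with $P$ replaced by the actual range $\big[\,[\overline{1,3}],[1,3,\overline{1,4,2,4}]\,\big]$ of the values $[b,\omega]$). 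From this I will deduce
$$
(\star)\qquad h^{p}(\beta)<h^{q}(\beta')\quad\text{for all }\beta,\beta'\in P\text{ and all integers }0\le p<q,
$$
since $h^{p}(\beta)\le h^{p}(\sup P)$ while $h^{q}(\beta')\ge h^{q}(\inf P)=h^{p}\big(h^{q-p}(\inf P)\big)\ge h^{p}\big(h(\inf P)\big)>h^{p}(\sup P)$, using that $h$ is increasing, $q-p\ge1$ and $\inf P\in(0,\tau)$.

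Then I run the main comparison. Set $A_1:=h^{i}(\beta)$, $A_2:=h^{j}(\beta')$, $A_3:=h^{i'}(\beta)$, $A_4:=h^{j'}(\beta')$; the goal is $g(A_1)+g(A_2)>g(A_3)+g(A_4)$. From $i'+j'=i+j$ and $|i'-j'|>|i-j|$ one gets $\min(i',j')<\min(i,j)$ and $\max(i',j')>\max(i,j)$. Assume $i'<j'$ (the case $i'>j'$ reduces to it by exchanging the two summands, which exchanges $\omega\leftrightarrow\omega'$ but not the inequality); then $i'<\min(i,j)$ and $\max(i,j)<j'$, so, combining $(\star)$ with the strict monotonicity of iterates on $(0,\tau)$ for the comparisons involving the same base point, one obtains $A_3=\min\{A_1,A_2,A_3,A_4\}$ and $A_4=\max\{A_1,A_2,A_3,A_4\}$. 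Hence Lemma~\ref{lemdistortionhomography}, applied to the increasing concave function $g$, reduces the goal to the coarser inequality $A_1+A_2>A_3+A_4$. Putting $s:=i-i'=j'-j\ge1$ we have $A_1=h^{s}(A_3)$ and $A_4=h^{s}(A_2)$, so $A_1+A_2>A_3+A_4$ is equivalent to $\Psi_s(A_3)>\Psi_s(A_2)$, and this holds because $A_3<A_2$ (by $(\star)$, as $i'<j$) and $\Psi_s$ is strictly decreasing on $\RR_+$.

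The step I expect to be the main obstacle is getting the extremality statement ``$A_3=\min$, $A_4=\max$'' right, that is, proving the separation $(\star)$: this is the one place where the explicit words $a=1,4,2,4$ and $b=1,3$ intervene (here through the elementary inequality $4/5<h(3/4)$, in the paper's formulation through Lemma~\ref{LemComparisonAandB} and the lexicographic order of Lemma~\ref{LemLexicographicOrder}), and one must be careful that ``one more iteration of $h$'' beats ``a different base point in $P$'' uniformly. Once $(\star)$ and the global strict monotonicity of $\Psi_s$ are in place, the remainder is purely formal: the bookkeeping of which of $A_1,\dots,A_4$ is smallest and largest, followed by a single invocation of Lemma~\ref{lemdistortionhomography}.
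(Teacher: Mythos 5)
Your proof is correct, and it reaches the conclusion by a route that differs in execution from the paper's, although both arguments rest on the same two pillars: concavity of the outer homography (Lemma \ref{lemdistortionhomography}) and a numerical separation between ``prepend an $a$'' and ``prepend a $b$''. The paper first uses the lexicographic order (Lemma \ref{LemLexicographicOrder}) to replace $\omega,\omega'$ by the extremal periodic words, then strips the homography $[1,4,a^{(j')},\cdot\,]$ by concavity, then uses the lexicographic order a second time to reduce the general gap $l\ge 1$ to $l=1$, and finally invokes the decimal computations of Lemma \ref{LemComparisonAandB}. You instead keep the actual words throughout, encode the insertion of one copy of $a$ as the explicit homography $h(x)=(9x+40)/(11x+49)$, and exploit two quantitative facts: the uniform contraction $(h^{s})'<1$ on $\RR_+$, which makes $h^{s}-\mathrm{id}$ strictly decreasing and so handles all gaps $s\ge1$ at once without any reduction to $s=1$; and the separation $(\star)$, proved from the elementary rational inequality $4/5<187/229=h(3/4)$, which replaces both the lexicographic order and Lemma \ref{LemComparisonAandB} in identifying $h^{i'}(\beta)$ and $h^{j'}(\beta')$ as the extreme terms. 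After that a single application of Lemma \ref{lemdistortionhomography} to the fixed map $x\mapsto(4+x)/(5+x)$ finishes the argument. What your version buys is self-containedness and uniformity (the only numerical input is a comparison of two explicit fractions, and no case analysis on $l$ is needed); what the paper's version buys is that its intermediate steps (the lexicographic order, the extremal values $[a,b^{(\infty,+)}]$, $[b,a^{(\infty,+)}]$, etc.) are reused verbatim in Lemmas \ref{LemFirstGenerationCantor}--\ref{LemSecondGenerationCantor}, so the machinery is amortized over the rest of Section \ref{Sec:ProofOfMainTheorem}. All the individual steps you flag as delicate check out: the matrix of $a$ is indeed $\bigl(\begin{smallmatrix}9&40\\11&49\end{smallmatrix}\bigr)$, the range of $[b,\omega]$ is contained in $[3/4,4/5]\subset(0,\tau)$ with $\tau=[\overline{1,4,2,4}]$, and the bookkeeping $\min(i',j')<\min(i,j)\le\max(i,j)<\max(i',j')$ together with $(\star)$ does make $h^{i'}(\beta)$ the minimum and $h^{j'}(\beta')$ the maximum of the four terms.
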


\begin{proof}
Assume without loss of generality that $j'<j\leq i<i'$ and consider the positive integer $l:=i'-i\geq1$. Observe that we have also $j-j'=l$ since $i+j=i'+j'$. Consider the homography $g$ such that
$
g\big([\omega'']\big)=[1,4,a^{(j')},\omega'']
$
for any positive infinite word $\omega''$. Observe that $g$ is increasing monotone, since it belongs to $\psltwoz$, and that $g^{-1}(\infty)<0$, since $g$ has positive coefficients. The Lemma follows from
\begin{eqnarray*}
&&
g\big([a^{(l)},b,\omega]\big)+
g\big([a^{(l)},b,\omega']\big)
>
g\big([a^{(2l)},b,\omega]\big)+
g\big([b,\omega']\big)
\textrm{ if }
i=j
\\
&&
g\big([a^{(l+1)},b,w]\big)+
g\big([a^{(l)},b,w']\big)
>
g\big([a^{(2l+1)},b,w]\big)+
g\big([b,w']\big)
\textrm{ if }
i=j+1.
\end{eqnarray*}
According to Lemma \ref{LemLexicographicOrder} and recalling that $g$ is increasing monotone, it is enough to prove that for any $l\geq1$ we have
\begin{eqnarray*}
&&
g\big([a^{(l)},b^{(\infty,+)}]\big)+
g\big([a^{(l)},b^{(\infty,+)}]\big)
>
g\big([a^{(2l)},b,a^{(\infty,+)}]\big)+
g\big([b,a^{(\infty,+)}]\big)
\\
&&
g\big([a^{(l+1)},b^{(\infty,+)}]\big)+
g\big([a^{(l)},b^{(\infty,+)}]\big)
>
g\big([a^{(2l+1)},b,a^{(\infty,+)}]\big)+
g\big([b,a^{(\infty,+)}]\big)
\end{eqnarray*}
and since $g$ is concave and strictly increasing, according to Lemma \ref{lemdistortionhomography} it is enough to prove that for any $l\geq1$ we have \begin{eqnarray*}
&&
[a^{(l)},b^{(\infty,+)}]
+
[a^{(l)},b^{(\infty,+)}]
>
[a^{(2l)},b,a^{(\infty,+)}]
+
[b,a^{(\infty,+)}]
\\
&&
[a^{(l+1)},b^{(\infty,+)}]
+
[a^{(l)},b^{(\infty,+)}]
>
[a^{(2l+1)},b,a^{(\infty,+)}]
+
[b,a^{(\infty,+)}].
\end{eqnarray*}
Since $l\geq1$, according to Lemma \ref{LemLexicographicOrder} it is enough to prove
\begin{eqnarray*}
&&
2[a^{(1)},b^{(\infty,+)}]
>
[a^{(\infty,+)}]
+
[b,a^{(\infty,+)}]
\\
&&
[a^{(2)},b^{(\infty,+)}]
+
[a^{(1)},b^{(\infty,+)}]
>
[a^{(\infty,+)}]
+
[b,a^{(\infty,+)}].
\end{eqnarray*}
The first inequality corresponds to the statement of Lemma \ref{LemComparisonAandB}. The second inequality follows from the first observing that
$
[a^{(2)},b^{(\infty,+)}]>[a^{(1)},b^{(\infty,+)}]
$,
according to Lemma \ref{LemLexicographicOrder}.
\end{proof}

\subsection{The first generation of $\KK$}\label{SecFirstGenerationCantor}

Consider the function $\kappa:\Xi_0\to\NN^\ast$, where for any $\xi\in\Xi_0$ the integer $\kappa(\xi)$ is the maximum $k\geq1$ such that for any $N\in\NN$ there exists $n>N$ with
$\xi_j=a$ for all $j$ with $n\leq j\leq n+k-1$.

\begin{lemma}
\label{lemLagrangeConstantOnFirstGeneration}
Consider $\xi\in\Xi_0$ such that $\kappa(\xi)=k$. Then there exist positive infinite words $\omega=\omega(\xi)$ and $\omega'=\omega'(\xi)$ in the letters $a,b$ such that
\begin{eqnarray*}
&&
L^\sigma(\xi)=
7\cdot\big(
[1,4,a^{(i)},b,\omega']
+
[1,4,a^{(i)},b,\omega]
\big)
\textrm{ if }
k=2i+1
\\
&&
L^\sigma(\xi)=
7\cdot\big(
[1,4,a^{(i+1)},b,\omega']
+
[1,4,a^{(i)},b,\omega]
\big)
\textrm{ if }
k=2i+2.
\end{eqnarray*}
\end{lemma}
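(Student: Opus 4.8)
The plan is to unpack the definition of $L^\sigma$ and $\sigma$ in terms of the subshift $\Xi_0$ and to locate where the $\limsup$ defining $L^\sigma(\xi)$ is achieved. Recall that $L^\sigma(\xi)=7\cdot\limsup_{n\to\infty}\big([\sigma^n(\xi)]_-+[\sigma^n(\xi)]_+\big)$, where $[\xi]_+=[1,4,\xi_1,\xi_2,\dots]$ and $[\xi]_-=[1,4,\xi_{-1},\xi_{-2},\dots]$, reading the letters $a=1,4,2,4$ and $b=1,3$ as blocks of continued-fraction entries. First I would observe that, by the definition of $\sigma$ as the first-return map of the shift to $\Xi_0$ (the set of sequences with $\xi_0=a$), the iterates $\sigma^n(\xi)$ run exactly over the re-centerings of $\xi$ at the successive occurrences of the letter $a$. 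So the $\limsup$ is taken over all positions $n$ in $\xi$ where $\xi_n=a$, and for such a position the quantity $[\sigma^n(\xi)]_-+[\sigma^n(\xi)]_+$ depends on how long a run of $a$'s one sits inside and on what comes before/after that run.

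The key step is to identify the "optimal window." By definition of $\kappa(\xi)=k$, there are infinitely many maximal blocks of $a$'s of length $\geq k$ (and only finitely many of length $>k$, so eventually every block of $a$'s has length exactly $k$ at infinitely many spots, with all longer blocks gone). Fix such a block $b\,a^{(k)}\,b$ occurring at arbitrarily large positions, and consider the occurrences of $a$ inside it: re-centering at the $j$-th $a$ of the block gives, to within a vanishing error, the pair $[1,4,a^{(k-j)},b,\omega]+[1,4,a^{(j-1)},b,\omega']$ for appropriate tails $\omega,\omega'$ in the letters $a,b$. By Lemma 4.21 (\ref{LemCenteringInTheMiddle}), among all choices of how to split $k$ into $i+j$ with $i+j=k-1$ (equivalently, among all $a$-positions inside a block of $a$'s of total length $k$), the sum $[1,4,a^{(i)},b,\omega]+[1,4,a^{(j)},b,\omega']$ is maximized when $i$ and $j$ are as balanced as possible, i.e. $i=j$ (if $k-1$ is even, so $k=2i+1$) or $|i-j|=1$ (if $k-1$ is odd, so $k=2i+2$). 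This is exactly the split recorded in the two cases of the statement; centering in the middle of the longest available run of $a$'s is what maximizes the Lagrange value.

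It then remains to argue two things. First, that this centered value is genuinely attained in the $\limsup$: since there are infinitely many blocks $b\,a^{(k)}\,b$, and since by Lemma 4.20 (\ref{LemLexicographicOrder}, the lexicographic order) the function $\xi\mapsto[\xi]_\pm$ is controlled by finite prefixes, a standard compactness/diagonal argument lets us pass to a subsequence of such blocks along which the left tail and the right tail converge to some $\omega(\xi)$ and $\omega'(\xi)$; along this subsequence the pair converges to the asserted value, so the $\limsup$ is at least $7\cdot([1,4,a^{(i)},b,\omega']+[1,4,a^{(j)},b,\omega])$ with the balanced $(i,j)$. Second, that nothing larger occurs: any other occurrence of $a$ (one not sitting near the center of a length-$k$ block, or sitting in a block of length $<k$ that occurs cofinitely, or in one of the finitely many longer blocks) gives, again by Lemma 4.21 together with Lemma 4.20, a strictly smaller contribution in the limit — the finitely many longer blocks drop out of the $\limsup$ automatically, and for the others the balancing inequality of Lemma 4.21 applies in reverse. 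Combining the two bounds yields the claimed formula. The main obstacle is the bookkeeping in this last step: one must set up the correspondence between "$a$-positions in $\xi$" and "pairs $(i,j)$ with $i+j=k-1$ plus tails" carefully enough that Lemma 4.21 can be invoked uniformly, and check that the $o(1)$ errors coming from truncating the continued fractions (the $\sim$ relation) do not interfere with the strict inequalities; this is routine but must be done with care to be sure the $\limsup$ is realized precisely by the centered window and not merely bounded by it.
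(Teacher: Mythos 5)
Your proposal follows the same route as the paper: extract limiting tails $\omega,\omega'$ by compactness along a subsequence realizing the $\limsup$, then use Lemma \ref{LemLexicographicOrder} to rule out centerings in runs of $a$'s shorter than $k$ and Lemma \ref{LemCenteringInTheMiddle} to show that the centred position of a length-$k$ run dominates every other position in the same run (where the two tails coincide, so that lemma applies verbatim). The paper's own proof is even terser --- it is exactly this one-line reduction to those two lemmas --- so your plan is, if anything, more explicit about the remaining bookkeeping.
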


\begin{proof}


There exist two positive infinite words $\omega$ and $\omega'$ such that
$$
L^\sigma(\xi)=7\cdot\big([1,4,\omega']+[1,4,\omega]\big).
$$
The Lemma follows from Lemma \ref{LemLexicographicOrder} and Lemma \ref{LemCenteringInTheMiddle}.
\end{proof}

We observe that the two terms containing respectively in $\omega$ and $\omega'$ in Lemma \ref{lemLagrangeConstantOnFirstGeneration} does not necessarily correspond to the \emph{past} or to the \emph{future}. The same ambiguity appears further in Lemma \ref{lemLagrangeConstantOnSecondGeneration}.

\begin{lemma}
\label{LemFirstGenerationCantor}
The first generation $\KK(1)$ is the set of closed intervals $I_k$ with $k\in\NN^\ast$ defined by
$$
I_k:=
\big[
L^\sigma(\overline{b^\infty a^{(k)}b^\infty}),
L^\sigma((ba^{(k)})^\infty)
\big].
$$
Moreover, for any $k\in\NN$ and any $\xi\in\Xi_0$ we have $L^\sigma(\xi)\in I_k$ if and only if $\kappa(\xi)=k$. Finally, the last two conditions are equivalent to the existence of positive infinite words $\omega$ and $\omega'$, depending on $\xi$ and $k$, such that
\begin{eqnarray*}
&&
L^\sigma(\xi)=
7\cdot\big(
[1,4,a^{(i)},b,\omega']
+
[1,4,a^{(i)},b,\omega]
\big)
\textrm{ if }
k=2i+1
\\
&&
L^\sigma(\xi)=
7\cdot\big(
[1,4,a^{(i)},b,\omega']
+
[1,4,a^{(i+1)},b,\omega]
\big)
\textrm{ if }
k=2i+2.
\end{eqnarray*}
\end{lemma}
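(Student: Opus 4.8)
The first displayed equality is the \emph{definition} of $\KK(1)$, so what has to be proved are the two equivalences together with the pairwise disjointness of the intervals $I_k$ (needed for the Cantor-set description of the following subsections). I would organize the argument in four steps. \emph{Step 1 (from $\kappa$ to the formula).} By Lemma~\ref{lemLagrangeConstantOnFirstGeneration}, for $\xi\in\Xi_0$ with $\kappa(\xi)=k$ there are positive infinite words $\omega,\omega'$ in the letters $a,b$ realizing the announced formula; I would first note that these $\omega,\omega'$ may be taken with every run of the letter $a$ of length at most $k$. Indeed $\omega,\omega'$ are the one-sided tails of a bi-infinite limit $\xi^\ast=\lim_j\sigma^{m_j}(\xi)$ realizing the $\limsup$ in the definition of $L^\sigma(\xi)$, and since any finite factor of $\xi^\ast$ reappears in $\xi$ at the positions $m_j\to\infty$, a run of $k+1$ letters $a$ in $\xi^\ast$ would produce a recurring run of $k+1$ letters $a$ in $\xi$, against $\kappa(\xi)=k$. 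This already gives ``$\kappa(\xi)=k\ \Rightarrow$ the displayed formula''.

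\emph{Step 2 (endpoints, and $\kappa(\xi)=k\Rightarrow L^\sigma(\xi)\in I_k$).} I would compute $\inf I_k$ and $\sup I_k$ by applying Lemma~\ref{lemLagrangeConstantOnFirstGeneration} to the two special sequences. For $\xi=\overline{b^\infty a^{(k)}b^\infty}$ the blocks $a^{(k)}$ are isolated by arbitrarily long blocks of $b$'s, so the $\limsup$ is attained with tails $\omega=\omega'=b^{(\infty,+)}$ and $L^\sigma(\xi)=14\,[1,4,a^{(i)},b^{(\infty,+)}]$ for $k=2i+1$ (and $7\big([1,4,a^{(i)},b^{(\infty,+)}]+[1,4,a^{(i+1)},b^{(\infty,+)}]\big)$ for $k=2i+2$); for the periodic $\xi=(ba^{(k)})^\infty$ the tails are $(a^{(k)}b)^{(\infty,+)}$ and $L^\sigma(\xi)=14\,[1,4,a^{(i)},b,(a^{(k)}b)^{(\infty,+)}]$ (resp. the analogous sum). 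For a general $\xi$ with $\kappa(\xi)=k$, each factor $[1,4,a^{(i)},b,\omega]$ supplied by Step~1 satisfies $[1,4,a^{(i)},b^{(\infty,+)}]\le[1,4,a^{(i)},b,\omega]$ by Lemma~\ref{LemLexicographicOrder} (the letter $b$ precedes $a$), and, since $\omega$ has $a$-runs of length $\le k$, Lemmas~\ref{LemLexicographicOrder} and~\ref{LemmaMaxMinCantorset} give $[1,4,a^{(i)},b,\omega]\le[1,4,a^{(i)},b,(a^{(k)}b)^{(\infty,+)}]$; likewise for $\omega'$ and, when $k$ is even, for the $a^{(i+1)}$-factor. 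Hence $L^\sigma(\xi)\in I_k$, and together with Step~1 this yields ``$\kappa(\xi)=k\ \Rightarrow\ L^\sigma(\xi)\in I_k$ and the displayed formula''.

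\emph{Step 3 (disjointness --- the main point).} I would derive the pairwise disjointness from $\sup I_k<\inf I_{k+1}$ for every $k\ge1$. Dividing by $7$ and introducing the increasing Möbius map $g(t):=[1,4,a^{(i)},t]=\tfrac{p_m t+p_{m-1}}{q_m t+q_{m-1}}$ with $m=2+4i$ --- concave on $\RR_+$, since its pole $-q_{m-1}/q_m$ is negative --- the required inequality becomes $2g(x)<g(y)+g(z)$ for $k=2i+1$, where $x=[b,(a^{(k)}b)^{(\infty,+)}]$, $y=[a,b^{(\infty,+)}]$, $z=[b^{(\infty,+)}]$, and of the shape $g(u)+g(v)<2g(w)$ for $k$ even, where $u=[a,b,(a^{(k)}b)^{(\infty,+)}]$, $v=[b,(a^{(k)}b)^{(\infty,+)}]$, $w=[a,b^{(\infty,+)}]$. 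In each case two of the three arguments start with the letter $b$ and so lie in the interval $\{[b,\cdot]\}=\big[[b^{(\infty,+)}],[b,a^{(\infty,+)}]\big]$, while the third starts with $a$ and lies in $\{[a,\cdot]\}$; by Lemma~\ref{LemComparisonAandB} the gap $[a,b^{(\infty,+)}]-[b,a^{(\infty,+)}]$ between these intervals is more than twenty times the width of $\{[b,\cdot]\}$, so $\tfrac{x-z}{y-x}<\tfrac1{20}$ (and similarly for the even case). Using $g(s)-g(t)=(s-t)\big/\big((q_m s+q_{m-1})(q_m t+q_{m-1})\big)$ and the uniform bound $\tfrac{q_m y+q_{m-1}}{q_m z+q_{m-1}}<1+\tfrac{q_m}{q_{m-1}}<6$ (the last partial quotient of the word $1,4,a^{(i)}$ being $4$), I get $\tfrac{g(x)-g(z)}{g(y)-g(x)}=\tfrac{x-z}{y-x}\cdot\tfrac{q_m y+q_{m-1}}{q_m z+q_{m-1}}<\tfrac1{20}\cdot6<1$, i.e. $2g(x)<g(y)+g(z)$; the even case is handled the same way, with $w$ now the ``close'' argument. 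This numerical comparison, the only point where the concrete values $a=1,4,2,4$ and $b=1,3$ are really used, is the step I expect to be the main obstacle.

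\emph{Step 4 (the converses).} Disjointness together with Step~2 gives ``$L^\sigma(\xi)\in I_k\ \Rightarrow\ \kappa(\xi)=k$'': if $\kappa(\xi)=k'<\infty$ then $L^\sigma(\xi)\in I_{k'}$, so $k'=k$; if $\kappa(\xi)=\infty$, then looking at the central loops of arbitrarily long $a$-runs shows the $\limsup$ is realized with tails tending to $a^{(\infty,+)}$, whence $L^\sigma(\xi)=14\,[1,4,a^{(\infty,+)}]=\phi_\infty$, which by Lemma~\ref{LemLexicographicOrder} exceeds every $\sup I_k$, so $L^\sigma(\xi)\notin I_k$. For the reverse implication of the last clause, if $L^\sigma(\xi)$ is written in the displayed form for a given $k$, then by Step~1 applied to $\kappa(\xi)$ the same value also admits such a representation with exponent prescribed by $\kappa(\xi)$ and with $\omega,\omega'$ of $a$-runs bounded by $\kappa(\xi)$, so $L^\sigma(\xi)\in I_{\kappa(\xi)}$; comparing the exponents through Lemma~\ref{LemLexicographicOrder} (a factor $[1,4,a^{(i')},b,\cdot]$ is strictly increasing in $i'$, uniformly in the tail) and invoking disjointness forces $k=\kappa(\xi)$. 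The nesting $\KK(n+1)\subset\KK(n)$ and the identity $\KK=\bigcap_n\KK(n)$ are not part of this lemma and are established in \S\ref{SecSecondGenerationCantor} and \S\ref{EndOfTheProofOfMainTheorem}.
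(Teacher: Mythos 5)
Your proposal is correct and follows essentially the same route as the paper: normal forms from Lemma~\ref{lemLagrangeConstantOnFirstGeneration}, identification of the endpoints of $I_k$ as the maximum and minimum of $L^\sigma$ over $\{\xi:\kappa(\xi)=k\}$, and the key inequality $\sup I_k<\inf I_{k+1}$ obtained by pushing the numerical comparison of Lemma~\ref{LemComparisonAandB} through the homography $g(t)=[1,4,a^{(i)},t]$. Your explicit distortion estimate in Step~3 (the $1/20$ versus $6$ bound) is in fact more careful than the paper's concluding appeal to Lemma~\ref{lemdistortionhomography}, whose stated hypotheses yield the implication in the direction opposite to the one needed at that point, so that concavity alone does not suffice and a quantitative bound of exactly your kind is what actually closes the argument.
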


As it is described in \S \ref{EndOfTheProofOfMainTheorem} below, the first generation of gaps $(G_k)_{k\geq1}$ is given by the connected components of the complement of $\KK(1)$.

\begin{proof}
We prove that for any integer $k\geq 0$ we have a gap in $\KK$ given by the open interval
$$
\big(
L^\sigma((ba^{(k)})^\infty),
L^\sigma(\overline{b^\infty a^{(k+1)}b^\infty})
\big).
$$
We assume that $k$ is odd, the proof for even $k$ being the same. Set $k=2i+1$ with $i\in\NN$. Since both
$
\kappa\big(
L^\sigma(b(a^{(k)})^\infty)
\big)=k
$
and
$
\kappa\big(
\overline{b^\infty a^{(k+1)}b^\infty}
\big)=k
$,
Lemma \ref{lemLagrangeConstantOnFirstGeneration} implies
\begin{eqnarray*}
&&
L^\sigma\big((ba^{(k)})^\infty\big)
=
7\cdot\big(
[1,4,a^{(i)},(ba^{(k)})^{(\infty,+)}]
+
[1,4,a^{(i)},(ba^{(k)})^{(\infty,+)}]
\big)
\\
&&
L^\sigma(\overline{b^\infty a^{(k+1)}b^\infty})
=
7\cdot\big(
[1,4,a^{(i)},b^{(\infty,+)}]
+
[1,4,a^{(i+1)},b^{(\infty,+)}]
\big).
\end{eqnarray*}
Comparing the two quantities above with the expression in Lemma \ref{lemLagrangeConstantOnFirstGeneration} one gets
\begin{eqnarray*}
&&
L^\sigma((ba^{(k)})^\infty)
=
\max\{L^\sigma(\xi)
\textrm{ ; }
\xi\in\Xi_0,\kappa(\xi)=k\}
\\
&&
L^\sigma(\overline{b^\infty a^{(k+1)}b^\infty})
=
\min\{L^\sigma(\xi)
\textrm{ ; }
\xi\in\Xi_0,\kappa(\xi)=k+1\}.
\end{eqnarray*}
The Lemma follows proving that we have the strict inequality
$$
L^\sigma\big((ba^{(k)})^\infty\big)
<
L^\sigma(\overline{b^\infty a^{(k+1)}b^\infty}).
$$
According to the formulae for
$
L^\sigma\big((ba^{(k)})^\infty\big)
$
and
$
L^\sigma(\overline{b^\infty a^{(k+1)}b^\infty})
$
obtained above, and observing that Lemma \ref{LemLexicographicOrder} implies
$$
L^\sigma\big((ba^{(k)})^\infty\big)
=
7\cdot\big(
[1,4,a^{(i)},(ba^{(k)})^{(\infty,+)}]
+
[1,4,a^{(i)},(ba^{(k)})^{(\infty,+)}]
\big)
\leq
14\cdot[1,4,a^{(i)},b,a^{(\infty,+)}]
$$
it is enough to prove that
$$
2\cdot[1,4,a^{(i)},b,a^{(\infty,+)}]
<
[1,4,a^{(i)},b^{(\infty,+)}]
+
[1,4,a^{(i+1)},b^{(\infty,+)}].
$$
In order to prove the last inequality, let $g$ be the homography such that
$
g\big([\omega]\big)=[1,4,a^{(i)},\omega]
$
for any positive infinite word $\omega$. The inequality is equivalent to
$$
g\big([b,a^{(\infty,+)}]\big)
-
g\big([b^{(\infty,+)}]\big)
<
g\big([a,b^{(\infty,+)}]\big)
-
g\big([b,a^{(\infty,+)}]\big)
$$
and the latter follows from Lemma \ref{lemdistortionhomography}.
\end{proof}

\subsection{The second generation of $\KK$}\label{SecSecondGenerationCantor}

Let $\Xi_{0,k}$ be the set of $\xi\in\Xi_0$ with $\kappa(\xi)=k$. Define a function $\nu:\Xi_{0,k}\to\NN$, where for any $\xi\in\Xi_{0,k}$ the integer $\nu(\xi)$ is the minimum positive integer $n$ such that in $\xi$ we have infinitely often one of the two finite words
$$
\textrm{ either }
a,b^{(n)},a^{(k)}
\textrm{ or }
a^{(k)},b^{(n)},a.
$$

\begin{lemma}
\label{LemSecondGeerationCantor(Technical)}
For any $i\in\NN$, any positive integer $n$ and any pair of positive infinite words $\omega$ and $\omega'$ in he letters $a,b$ we have
$$
[1,4,a^{(i+1)},b^{(n)},\omega]
+
[1,4,a^{(i)},b^{(n)},a,\omega']
>
[1,4,a^{(i)},b^{(n)},\omega]
+
[1,4,a^{(i+1)},b^{(n)},a,\omega'].
$$
\end{lemma}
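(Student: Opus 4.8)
The plan is to factor out the common prefix $1,4,a^{(i)}$ from all four continued fractions and reduce the inequality to a statement about a single concave increasing homography, to which Lemma \ref{lemdistortionhomography} applies. Let $H$ be the homography with $H([\omega''])=[1,4,a^{(i)},\omega'']$ for every positive infinite word $\omega''$; by the observation following Lemma \ref{lemdistortionhomography}, $H$ is increasing and concave on $\RR_+$. Let $\ell$ be the homography $\ell(y)=[1,4,2,4,y]$; it is increasing, and a routine computation with the convergents of $[1,4,2,4]$ (whose denominators are $q_3=11$, $q_4=49$) shows that $\ell$ has determinant $\pm1$ with $\ell'(y)=(q_4y+q_3)^{-2}\le 1/121<1$ for all $y\ge0$. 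Put
$$
x_1:=[1,4,2,4,b^{(n)},\omega],\quad x_2:=[b^{(n)},a,\omega'],\quad x_3:=[b^{(n)},\omega],\quad x_4:=[1,4,2,4,b^{(n)},a,\omega'].
$$
Then the left-hand side of the claimed inequality is $H(x_1)+H(x_2)$ and the right-hand side is $H(x_3)+H(x_4)$; moreover, since $a=1,4,2,4$, one has the identities $x_1=\ell(x_3)$ and $x_4=\ell(x_2)$.

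First I would order the four numbers, showing $x_3<x_2<x_1<x_4$. The numbers $x_1,x_4$ begin with the entries $1,4$ while $x_2,x_3$ begin with $1,3$; since $[1,4,2,4,\cdot]=1/(1+[4,2,4,\cdot])$ with $[4,2,4,\cdot]<1/4$, while $[1,3,\cdot]=1/(1+[3,\cdot])$ with $[3,\cdot]\ge 1/4$, one always has $[1,4,2,4,\cdot]>[1,3,\cdot]$; in particular $x_2<x_1$. Next, $x_3<x_2$: after deleting the common prefix $b^{(n)}$, which has even length $2n$, this amounts to $[\omega]<[1,4,2,4,\omega']=[a,\omega']$, which holds by Lemma \ref{LemLexicographicOrder} (equivalently, writing $\omega=b,\tilde\omega$, by the chain $[\omega]\le[b,a^{(\infty,+)}]<[a,b^{(\infty,+)}]\le[a,\omega']$ furnished by Lemma \ref{LemComparisonAandB}). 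Finally $x_1=\ell(x_3)<\ell(x_2)=x_4$ because $\ell$ is increasing and $x_3<x_2$. Hence $x_3$ is the smallest and $x_4$ the largest of the four numbers, with $x_1,x_2$ strictly between them.

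Next I would verify the budget inequality $x_1+x_2>x_3+x_4$. Using $x_1=\ell(x_3)$ and $x_4=\ell(x_2)$ we get
$$
x_1+x_2-x_3-x_4=\big(\ell(x_3)-x_3\big)-\big(\ell(x_2)-x_2\big)=g(x_3)-g(x_2),\qquad g(y):=\ell(y)-y;
$$
and $g'(y)=\ell'(y)-1<0$ for $y\ge0$, so $g$ is strictly decreasing, and $x_3<x_2$ yields $g(x_3)>g(x_2)$, i.e. $x_1+x_2>x_3+x_4$. Now apply Lemma \ref{lemdistortionhomography} to the concave increasing function $H$ with $x_3$ playing the role of the minimum, $x_4$ that of the maximum, and $x_1,x_2$ the two intermediate points (which satisfy $x_1+x_2>x_3+x_4$): it yields $H(x_1)+H(x_2)>H(x_3)+H(x_4)$, which is exactly the asserted inequality.

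The computations with $\ell$ and $H$ — the derivative bound $\ell'<1$, and the concavity and monotonicity of $H$ — are of the same routine nature as those already carried out in this section. The one genuinely delicate point is the comparison $x_3<x_2$, i.e. $[\omega]<[1,4,2,4,\omega']$: this is where the explicit shapes of the words $a$ and $b$ enter, through Lemma \ref{LemComparisonAandB}, exactly as the same lemma is the arithmetic input in the proofs of Lemmas \ref{LemLexicographicOrder} and \ref{LemCenteringInTheMiddle}. I would treat this step with care, since it is the place where the hypotheses on $\omega$ coming from the setting $\kappa(\xi)=k$, $\nu(\xi)=n$ (in particular the fact that the word following a maximal run of $a$'s begins with $b$) are actually needed.
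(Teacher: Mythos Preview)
Your approach is essentially the same as the paper's: the paper also introduces the homography $h([\omega''])=[a,\omega'']$ (your $\ell$) and $g([\omega''])=[1,4,a^{(i)},\omega'']$ (your $H$), uses the contraction $0<h'<1$ to obtain the sum inequality $x_1+x_2>x_3+x_4$, and then invokes Lemma~\ref{lemdistortionhomography}. Your write-up is more explicit in that you actually check the ordering hypotheses of Lemma~\ref{lemdistortionhomography}, and you correctly flag that the step $x_3<x_2$ (equivalently $[\omega]<[a,\omega']$) relies on $\omega$ beginning with $b$; this same assumption is implicit in the paper's displayed inequality $[b^{(n)},a,\omega']-[b^{(n)},\omega]>h([b^{(n)},a,\omega'])-h([b^{(n)},\omega])$, which from $0<h'<1$ alone only follows when the left-hand difference is positive.
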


\begin{proof}
Let $h$ and $g$ be the homographies such that we have respectively $
h\big([\omega'']\big)=[a,\omega'']
$
and
$
g\big([\omega'']\big)=[1,4,a^{(i)},\omega'']
$
for any positive infinite word $\omega''$. Since $0<h'(t)<1$ for any $h\in\psltwoz$ and any $t>0$ then we have
$$
[b^{(n)},a,\omega']
-
[b^{(n)},\omega]
>
h\big([b^{(n)},a,\omega']\big)
-
h\big([b^{(n)},\omega]\big),
$$
that is
$$
[a,b^{(n)},\omega]+[b^{(n)},a,\omega']
>
[b^{(n)},\omega]+[a,b^{(n)},a,\omega'].
$$
The statement follows applying Lemma \ref{lemdistortionhomography} to the function $g$.
\end{proof}

The same argument on the proof of Lemma \ref{lemLagrangeConstantOnFirstGeneration} and the estimate in Lemma \ref{LemSecondGeerationCantor(Technical)} give the Lemma below.

\begin{lemma}
\label{lemLagrangeConstantOnSecondGeneration}
Consider $\xi\in\Xi_{0,k}$ such that $\nu(\xi)=n$. Then there exist positive infinite words $\omega=\omega(\xi)$ and $\omega'=\omega'(\xi)$ in the letters $a,b$ such that
\begin{eqnarray*}
&&
L^\sigma(\xi)=
7\cdot\big(
[1,4,a^{(i)},b^{(n)},\omega']
+
[1,4,a^{(i)},b^{(n)},a,\omega]
\big)
\textrm{ if }
k=2i+1
\\
&&
L^\sigma(\xi)=
7\cdot\big(
[1,4,a^{(i+1)},b^{(n)},\omega']
+
[1,4,a^{(i)},b^{(n)},a,\omega]
\big)
\textrm{ if }
k=2i+2.
\end{eqnarray*}
\end{lemma}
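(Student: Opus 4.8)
\emph{Proof proposal.} The plan is to start from the first-generation normal form of Lemma~\ref{lemLagrangeConstantOnFirstGeneration} and refine it one step: inserting the blocks $b^{(n)}$ that the hypothesis $\nu(\xi)=n$ forces, and then breaking the remaining left-right symmetry by Lemma~\ref{LemSecondGeerationCantor(Technical)}. Since $\xi\in\Xi_{0,k}$ means $\kappa(\xi)=k$, Lemma~\ref{lemLagrangeConstantOnFirstGeneration} already provides positive infinite words $\omega,\omega'$ in the letters $a,b$ with
\[
L^\sigma(\xi)=7\big([1,4,a^{(i)},b,\omega']+[1,4,a^{(i)},b,\omega]\big)
\]
if $k=2i+1$, and with one $a^{(i)}$ replaced by $a^{(i+1)}$ if $k=2i+2$. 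In both cases the leading $a$'s come from a balanced maximal block $a^{(k)}$, so the single $b$ written in each term is merely the first letter of the maximal block of $b$'s sitting immediately after that block $a^{(k)}$; thus $\omega,\omega'$ each begin with a (possibly empty) block of $b$'s.

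Next I would exploit $\nu(\xi)=n$. By definition of $\nu$, the words $a\,b^{(n')}\,a^{(k)}$ and $a^{(k)}\,b^{(n')}\,a$ with $n'<n$ occur in $\xi$ only finitely often, so eventually every maximal block of $b$'s adjacent to a maximal block $a^{(k)}$ has length at least $n$; hence $\omega$ and $\omega'$ each begin with $b^{(n-1)}$, and both terms above rewrite as $[1,4,a^{(i)},b^{(n)},\dots]$ (resp.\ $[1,4,a^{(i+1)},b^{(n)},\dots]$). On the other hand, Lemma~\ref{LemLexicographicOrder} applied with $u=a^{(i)},b^{(m-1)}$ gives $[1,4,a^{(i)},b^{(m)},\dots]<[1,4,a^{(i)},b^{(m-1)},a,\dots]$ for every $m\geq 1$, so on each side the quantity grows when the block of $b$'s after the leading $a$'s is as short as $\nu(\xi)=n$ permits, namely exactly $b^{(n)}$ followed by an $a$. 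Since $\nu(\xi)=n$ precisely guarantees that $a^{(k)}b^{(n)}a$ (or $a\,b^{(n)}a^{(k)}$) recurs in $\xi$, at least one of the two sides can be realised in this sharp shape, so one term takes the form $[1,4,a^{(i)},b^{(n)},a,\omega]$ and the other the form $[1,4,a^{(i)},b^{(n)},\omega']$ (resp.\ $[1,4,a^{(i+1)},b^{(n)},\omega']$), where now $\omega'$ absorbs any remaining leading copies of $b$.

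When $k=2i+2$ it remains to decide which term carries the extra $a$, and this is exactly what Lemma~\ref{LemSecondGeerationCantor(Technical)} settles: the sum is strictly larger when the extra $a$ is on the side whose block $b^{(n)}$ is \emph{not} immediately followed by the forced $a$, i.e.\ in the pairing $[1,4,a^{(i+1)},b^{(n)},\omega']+[1,4,a^{(i)},b^{(n)},a,\omega]$ that appears in the statement. Finally, as in Lemma~\ref{LemCodingBySubshift}, the backward quantity $[\sigma^m\xi]_{-}$ reads the $a,b$-letters in the reverse order; but the identity $1,<u>=u,1$ puts it back into the same normal form (leading $a$'s, then a block of $b$'s, then a tail), so no change to the above is needed.

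The step I expect to be the main obstacle is the middle one. The hypothesis $\nu(\xi)=n$ forces the $b$-blocks adjacent to $a^{(k)}$ to be \emph{long} (length $\geq n$), whereas Lemma~\ref{LemLexicographicOrder} pushes them to be \emph{short}; one must check that these two effects combine to fix the extremal $b$-block at exactly $b^{(n)}$ on the relevant side(s). In particular, if the word $a\,b^{(n)}a^{(k)}b^{(n)}a$ does not itself recur in $\xi$ — so that $b^{(n)}$ cannot be achieved on both sides of one and the same block $a^{(k)}$ — one has to verify that the asserted form, with $\omega'$ permitted to begin with further copies of $b$, still realises the supremum. Keeping the reversal of the backward term and the parity of $k$ consistent through this estimate is routine, but must be carried out in tandem with it.
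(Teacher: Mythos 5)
Your proposal follows essentially the same route as the paper, whose entire proof of this lemma is the single remark that ``the same argument as in the proof of Lemma~\ref{lemLagrangeConstantOnFirstGeneration} and the estimate in Lemma~\ref{LemSecondGeerationCantor(Technical)}'' suffice; your write-up is a faithful and considerably more detailed expansion of exactly that plan (first-generation normal form, then the constraint from $\nu(\xi)=n$, then Lemma~\ref{LemSecondGeerationCantor(Technical)} to place the extra $a$ on the $a^{(i)}$ side). The obstacle you flag at the end --- ruling out that the $\limsup$ is attained at a configuration where \emph{both} $b$-blocks adjacent to the central $a^{(k)}$ have length strictly greater than $n$, so that neither term has the sharp shape $b^{(n)},a$ --- is genuine but is also left implicit in the paper; it closes because passing to an occurrence of $a^{(k)}b^{(n)}a$ (or its reverse), which recurs by definition of $\nu$, gains on one side an amount comparable to the separation between the cylinders $[\,\cdot\,,b^{(n)},a,\cdot\,]$ and $[\,\cdot\,,b^{(n)},b,\cdot\,]$, and this dominates whatever can be lost on the other side, where only letters at least one continued-fraction level deeper are altered (the same contraction mechanism underlying Lemmas~\ref{LemLexicographicOrder} and~\ref{lemdistortionhomography}).
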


\begin{lemma}
\label{LemSecondGenerationCantor}
For any $I_k\in\KK(1)$ the generation $\KK(2|I_k)$ is the family of closed intervals $J_n$ for $n\in\NN^\ast$ defined by
$$
J_n=
\big[
L^\sigma(\overline{b^\infty a^{(k)}b^{(n)}ab^\infty}),
L^\sigma((a^{(k)}b^{(n)})^\infty)
\big].
$$
Moreover, for any $n\in\NN^\ast$ and any $\xi\in\Xi_{0,k}$ we have $L^\sigma(\xi)\in J_n$ if and only if $\nu(\xi)=n$. Finally, the last two conditions are equivalent to the existence of positive infinite words $\omega$ and $\omega'$, depending on $\xi$ such that
\begin{eqnarray*}
&&
L^\sigma(\xi)=
7\cdot\big(
[1,4,a^{(i)},b^{(n)},\omega']
+
[1,4,a^{(i)},b^{(n)},a,\omega]
\big)
\textrm{ if }
k=2i+1
\\
&&
L^\sigma(\xi)=
7\cdot\big(
[1,4,a^{(i+1)},b^{(n)},\omega']
+
[1,4,a^{(i)},b^{(n)},a,\omega]
\big)
\textrm{ if }
k=2i+2.
\end{eqnarray*}
\end{lemma}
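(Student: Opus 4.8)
The plan is to run, one floor below, exactly the argument that proved Lemma~\ref{LemFirstGenerationCantor}: fix an interval $I_k\in\KK(1)$, restrict attention to the sequences $\xi\in\Xi_{0,k}$ (so $\kappa(\xi)=k$), and stratify them by the value of $\nu$. Write $k=2i+1$ or $k=2i+2$. The starting point is Lemma~\ref{lemLagrangeConstantOnSecondGeneration}, which already gives that every $\xi\in\Xi_{0,k}$ with $\nu(\xi)=n$ satisfies $L^\sigma(\xi)=7\big([1,4,a^{(i)},b^{(n)},\omega']+[1,4,a^{(i)},b^{(n)},a,\omega]\big)$ (with $a^{(i+1)}$ replacing $a^{(i)}$ in the first bracket when $k=2i+2$) for suitable positive infinite words $\omega,\omega'$ in the letters $a,b$; this is one of the two implications in the ``finally'' clause of the lemma. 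For the converse I would argue that if $L^\sigma(\xi)$ admits such a representation then necessarily $\kappa(\xi)=k$ and $\nu(\xi)=n$: a cofinal $a$-run of length $>k$, or the cofinal absence of $a$-runs of length $k$, or a cofinal $b$-run shorter than $n$ bordered in the relevant way, would --- via the monotonicity of Lemma~\ref{LemLexicographicOrder} together with the centering estimate of Lemma~\ref{LemCenteringInTheMiddle} --- push $L^\sigma(\xi)$ strictly outside the interval $J_n$ we are about to compute.

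The core is then to identify the two endpoints of $J_n$ as the infimum and the supremum (both attained) of $L^\sigma$ over $\{\xi\in\Xi_{0,k}:\nu(\xi)=n\}$. In the representation above the constraints $\kappa=k$ and $\nu=n$ say that the two tails $\omega,\omega'$ are words over $\{a,b\}$ in which no $a$-run exceeds length $k$ and in which the word $a^{(k)}b^{(n)}a$ (or its mirror $ab^{(n)}a^{(k)}$) must recur while no $b$-run shorter than $n$ recurs in that position. By the lexicographic order of Lemma~\ref{LemLexicographicOrder} the sum of the two continued fractions is largest when both tails are taken as large as the constraints allow, i.e.\ periodic with period $a^{(k)}b^{(n)}$; the asymmetry $a^{(i)}$ versus $a^{(i+1)}$ evaporates in the periodic case and Lemma~\ref{LemCenteringInTheMiddle} shows no unbalanced competitor does better, so the supremum equals $L^\sigma((a^{(k)}b^{(n)})^\infty)$. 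Dually, the sum is smallest when both tails are as $b$-heavy as possible, namely insert exactly one arbitrarily far-out block $a^{(k)}b^{(n)}a$ and fill with $b$'s elsewhere, which is precisely the sequence $\overline{b^\infty a^{(k)}b^{(n)}ab^\infty}$; here Lemma~\ref{LemSecondGeerationCantor(Technical)} and Lemma~\ref{lemdistortionhomography} confirm that this eventually-periodic configuration realizes the infimum. Combining, $\{L^\sigma(\xi):\kappa(\xi)=k,\ \nu(\xi)=n\}\subseteq J_n$, and every value of $L^\sigma$ in $J_n$ is attained at some $\xi$ with $\kappa(\xi)=k$ and $\nu(\xi)=n$.

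Finally I would check that the $J_n$ are mutually separated inside $I_k$, so that the connected components of $I_k\setminus\bigcup_n J_n$ are genuine gaps of $\KK$ and the characterization ``$L^\sigma(\xi)\in J_n\iff\nu(\xi)=n$'' holds as an equivalence. As in the proof of Lemma~\ref{LemFirstGenerationCantor}, this reduces to the strict inequality
\[
L^\sigma\big((a^{(k)}b^{(n+1)})^\infty\big)<L^\sigma\big(\overline{b^\infty a^{(k)}b^{(n)}ab^\infty}\big),
\]
between the top of $J_{n+1}$ and the bottom of $J_n$; after pulling out the common prefix $1,4,a^{(i)}$ as a homography $g$ and using Lemma~\ref{LemLexicographicOrder} to pass to extremal tails, this becomes a two-point comparison closed by the concavity and positivity of $g$ via Lemma~\ref{lemdistortionhomography}. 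One also notes that $J_n\subseteq I_k$ for every $n$ and that the left endpoints $L^\sigma(\overline{b^\infty a^{(k)}b^{(n)}ab^\infty})$ converge monotonically, as $n\to\infty$, to the endpoint of $I_k$ from which the family $(J_n)_n$ accumulates; both are routine consequences of the same lexicographic and distortion estimates.

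The step I expect to be the genuine obstacle is, exactly as in the first generation, the \emph{balancing}: showing that among all admissible pairs of tails the extremal values of the sum of the two continued fractions are attained by the fully periodic configuration (for the supremum) and by the single-insertion eventually-periodic one (for the infimum), in the face of the built-in asymmetry that one of the two summands carries one extra letter $a$. This is exactly what Lemma~\ref{LemCenteringInTheMiddle} and Lemma~\ref{LemSecondGeerationCantor(Technical)} are designed for, but assembling them correctly --- keeping track of the parities of the block lengths and of which tail plays the role of the ``past'' and which the ``future'' (a point the authors already flag after Lemma~\ref{lemLagrangeConstantOnFirstGeneration}) --- is where the care is needed; the remainder is bookkeeping with the lexicographic order and with the concave homographies $g([\omega])=[1,4,a^{(i)},\omega]$.
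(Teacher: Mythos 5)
Your proposal is correct and follows essentially the same route as the paper's proof: you use Lemma~\ref{lemLagrangeConstantOnSecondGeneration} to put $L^\sigma(\xi)$ in normal form on the stratum $\{\nu(\xi)=n\}$, identify $(a^{(k)}b^{(n)})^\infty$ and $\overline{b^\infty a^{(k)}b^{(n)}ab^\infty}$ as the maximizer and minimizer via Lemmas~\ref{LemLexicographicOrder}, \ref{LemCenteringInTheMiddle} and \ref{LemSecondGeerationCantor(Technical)}, and close the gap between consecutive strata by the strict inequality $L^\sigma\big((a^{(k)}b^{(n+1)})^\infty\big)<L^\sigma\big(\overline{b^\infty a^{(k)}b^{(n)}ab^\infty}\big)$ proved with the concave homography $g([\omega])=[1,4,a^{(i)},\omega]$ and Lemma~\ref{lemdistortionhomography}, exactly as the authors do.
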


As it is described in \S \ref{EndOfTheProofOfMainTheorem} below, for any $I_k\in\KK(1)$ the second generation of gaps $(G_{k,n})_{n\geq1}$ in the interval $I_k$ is given by the connected components of the complement of $\KK(2|I_k)$.

\begin{proof}
We prove that for any $n\geq 0$ we have a gap in $I_k$ corresponding to the open interval
$$
\big(
L^\sigma((a^{(k)}b^{(n)})^\infty),
L^\sigma(\overline{b^\infty a^{(k)}b^{(n+1)}ab^\infty})
\big).
$$
We assume that $k$ is even, the proof for even $k$ being the same (the assumption is complementary to the one in the proof of Lemma \ref{LemFirstGenerationCantor}, where only the case of odd $k$ is considered explicitly). Set $k=2i+2$ with $i\in\NN$. Observe that both
$
L^\sigma((a^{(k)}b^{(n)})^\infty)
$
and
$
L^\sigma(\overline{b^\infty a^{(k)}b^{(n+1)}ab^\infty})
$
belong to $I_k$, therefore Lemma \ref{LemFirstGenerationCantor} implies
\begin{eqnarray*}
&&
L^\sigma\big((a^{(k)}b^{(n+1)})^\infty\big)
=
7\cdot\big(
[1,4,a^{(i+1)},(b^{(n+1)}a^{(k)})^{(\infty,+)}]
+
[1,4,a^{(i)},(b^{(n+1)}a^{(k)})^{(\infty,+)}]
\big)
\\
&&
L^\sigma(\overline{b^\infty a^{(k)}b^{(n)}ab^\infty})
=
7\cdot\big(
[1,4,a^{(i+1)},b^{(\infty,+)}]
+
[1,4,a^{(i)},b^{(n)},a,b^{(\infty,+)}]
\big),
\end{eqnarray*}
where in the second equality we also need the estimate in Lemma \ref{LemSecondGeerationCantor(Technical)}. Lemma \ref{lemLagrangeConstantOnSecondGeneration} implies
\begin{eqnarray*}
&&
L^\sigma\big((a^{(k)}b^{(n+1)})^\infty\big)
=
\max\{L^\sigma(\xi)
\textrm{ ; }
\xi\in\Xi_{0,k},\nu(\xi)=n+1\}
\\
&&
L^\sigma(\overline{b^\infty a^{(k)}b^{(n)}ab^\infty})
=
\min\{L^\sigma(\xi)
\textrm{ ; }
\xi\in\Xi_{0,k},\nu(\xi)=n\}.
\end{eqnarray*}
The Lemma follows proving that we have the strict inequality
$$
L^\sigma\big((a^{(k)}b^{(n+1)})^\infty\big)
<
L^\sigma(\overline{b^\infty a^{(k)}b^{(n)}ab^\infty}).
$$
According to the expressions of these two quantities obtained above and observing that Lemma \ref{LemLexicographicOrder} implies
\begin{eqnarray*}
&&
L^\sigma\big((a^{(k)}b^{(n+1)})^\infty\big)
<
7\cdot\big(
[1,4,a^{(i+1)},b^{(n+1)},a^{(\infty,+)}]
+
[1,4,a^{(i)},b^{(n+1)},a^{(\infty,+)}]
\big)
\\
&&
L^\sigma(\overline{b^\infty a^{(k)}b^{(n)}ab^\infty})
=
7\cdot\big(
[1,4,a^{(i+1)},b^{(\infty,+)}]
+
[1,4,a^{(i)},b^{(n)},a,b^{(\infty,+)}]
\big)
\end{eqnarray*}
it is enough to prove
$$
g\big([a,b^{(n+1)},a^{(\infty,+)}]\big)
+
g\big([b^{(n+1)},a^{(\infty,+)}]\big)
<
g\big([a,b^{(\infty,+)}]\big)
+
g\big([b^{(n)},a,b^{(\infty,+)}]\big),
$$
where $g$ is the homography such that
$
g\big([\omega]\big)=[1,4,a^{(i)},\omega]
$
for any positive infinite word $\omega$.
\end{proof}

\subsection{End of the proof of Theorem \ref{TheoremMainTheorem}}\label{EndOfTheProofOfMainTheorem}

Theorem \ref{TheoremMainTheorem} simply follows rephrasing the statements of Lemma \ref{LemFirstGenerationCantor} and of Lemma \ref{LemSecondGenerationCantor}. The first generation $(G_k)_{k\geq1}$ of gaps in $\cL(S)$ is given by
\begin{eqnarray*}
&&
G_0:=\big(\phi_1,\phi_2\big)
\textrm{ and }
\\
&&
G_k:=
\big(
L^\sigma((ba^{(k)})^\infty),
L^\sigma(\overline{b^\infty a^{(k+1)}b^\infty})
\big)
\textrm{ for }
k\geq 1.
\end{eqnarray*}
For any interval
$
I_k=
\big[
L^\sigma(\overline{b^\infty a^{(k)}b^\infty}),
L^\sigma((ba^{(k)})^\infty)
\big]
$
of $\KK(1)$ with $k\geq1$ the second generation of gaps $(G_{k,n})_{n\geq1}$ in the interval $I_k$ is the family of intervals defined by
$$
G_{k,n}:=
\big(
L^\sigma((a^{(k)}b^{(n+1)})^\infty),
L^\sigma(\overline{b^\infty a^{(k)}b^{(n)}ab^\infty})
\big)
\textrm{ for }
n\geq1.
$$
According to Lemma \ref{LemSecondGenerationCantor}, and assuming that $k=2i$, we have
\begin{eqnarray*}
&&
G_{k}^{(+)}=
L^\sigma
\big(
b^\infty a^{(k+1)}b^\infty
\big)
=
14\cdot[1,4,a^{(i)},b^\infty]
\\
&&
G_{k+1}^{(-)}=
L^\sigma
\big(
(ba^{(k+1)})^\infty
\big)
=
14\cdot[1,4,a^{(i)},(ba^{(k+1)})^\infty],
\end{eqnarray*}
therefore $G_{k}^{(+)}<G_{k+1}^{(-)}$ is equivalent to
$
[b^\infty]<[(ba^{(k+1)})^\infty]
$,
which is true according to Lemma \ref{LemLexicographicOrder}. From the same Lemma it is obvious that for $k=2i$ and $i\to+\infty$ we have
$$
G_{k}^{(+)}=
14\cdot[1,4,a^{(i)},b^\infty]
\to
14\cdot[1,4,a^\infty]
=
\phi_\infty.
$$
The same estimate holds for $k=2i+1$. A similar argument proves the analogous statement for the second generation of holes $(G_{k,n})_{n\geq1}$, where for any $k\geq1$ and $n\geq1$ the formula for the endpoints $G_{k,n}{(-)}$ and $G_{k,n}^{(+)}$ is given by Lemma \ref{LemSecondGenerationCantor}. Theorem \ref{TheoremMainTheorem} is proved.

\subsection*{Acknowledgements}
C. Ulcigrai is currently supported by the ERC Grant ChaParDyn. The authors  would  like to thank the hospitality given by the Max Planck Institute during the program \emph{Dynamics and Numbers} and by CIRM in Luminy, Marseille, during the conference \emph{Dynamics and Geometry in the Teichmueller Space},  where parts of this project were completed. Research visits which made this collaboration possible were also supported by the EPSRC Grant EP/I019030/1, and by the ERC Grant ChaParDyn. The research leading to these results has received funding from the European Research Council under the European Union's Seventh Framework Programme (FP/2007-2013) / ERC Grant Agreement n. 335989.


\begin{thebibliography}{0000}

\bibitem[ArMarUl]{artigianimarcheseulcigrai}
M. Artigiani, L. Marchese, C. Ulcigrai:
\emph{The Lagrange spectrum of a Veech surface has Hall ray}. Arxiv:1409.7023.

\bibitem[BoDe]{boshernitzandelecroix}
M. Boshernitzan, V. Delecroix:
\emph{A packing problem, quantitative recurrence in [0,1] and the bottom of the Lagrange spectrum for interval exchange transformations}. In preparation.

\bibitem[CuFl]{cusicflahive}
T. W. Cusick, M. E. Flahive:
\emph{The Markoff and Lagrange Spectra}.
Mathematicas Surveys and Monographs, 30, 1989.

\bibitem[FoMat]{fornimatheus}
G. Forni, C. Matheus:
\emph{Introduction to Teichm\"uller theory and its applications to dynamics of interval exchange transformations flows on surfaces and billiards}.
Arxiv:1311.2758.

\bibitem[Ha]{hall}
M. Hall:
\emph{On the sum and products of continued fractions}.
Annals of Math, 48, (1947), 966-993.

\bibitem[HuLe]{hubertlelievre}
P. Hubert, S. Leli{\`e}vre:
\emph{Prime arithmetic Teichm\"uller disc in $\cH(2)$}. Israel J. Math. 151 (2006), 281-321.

\bibitem[HuMarUl]{hubertmarcheseulcigrai}
P. Hubert, L. Marchese, C. Ulcigrai:
\emph{Lagrange spectra in Teichm\"uller dynamics via renormalization}, Geom. Funct. Anal. 25 (2015), no.1, 180-255.

\bibitem[IbMo]{IM}
S. Ibarra and G. Moreira: 
\emph{On the Lagrange and Markov Dynamical Spectra for Geodesic Flows in Surfaces with Negative Curvature}. Preprint arXiv:1505.05178.


\bibitem[Mc]{mcmullen}
C. T. McMullen:
\emph{Teichm\"uller curves in genus two: discriminant and spin}, Math. Ann. 333 (2005), 87-130.





\bibitem[Mo]{gugu}
G. Moreira:
\emph{Geometric properties of Markov and Lagrange spectra}. Preprint IMPA.


\bibitem[ParPau]{paulinparkkonnen}
J. Parkkonen; F. Paulin: 
\emph{Prescribing the behaviour of geodesics in negative curvature}. 
Geom. Topol. 14 (2010), no. 1, 277-392.


\bibitem[Se]{series}
C. Series:
\emph{The geometry of Markoff numbers}.
Math. Intelligencer 7 (1985) 20–29.

\bibitem[Zo]{zorich}
A. Zorich:
\emph{Flat surfaces}.
Frontiers in number theory, physics, and geometry. I, 437-583, Springer, Berlin, 2006.

\end{thebibliography}
\end{document}